\theoremstyle{plain}
\newtheorem{theorem}{Theorem}[section]
\newtheorem{lemma}[theorem]{Lemma}
\newtheorem{proposition}[theorem]{Proposition}
\newtheorem{corollary}[theorem]{Corollary}
\theoremstyle{definition}
\newtheorem{definition}[theorem]{Definition}
\theoremstyle{remark}
\begin{document}

\afterpage{\rhead[]{\thepage} \chead[\small W. A. Dudek and R. A. R. Monzo \ \ \ \   ]
{\small The structure of idempotent translatable quasigroups      \ \ \ \ \ \ \ } \lhead[\thepage]{} }                  

\begin{center}
\vspace*{2pt}
{\Large \textbf{The structure of idempotent translatable quasigroups}}\\[30pt]
 {\large \textsf{\emph{Wieslaw A. Dudek \ and \ Robert A. R. Monzo}}}\\[30pt]
\end{center}
\date{}
{\footnotesize\textbf{Abstract.} We prove the main result that a groupoid of order $n$ is an idempotent and $k$-translatable quasigroup if and only if its multiplication is given by $x\cdot y=(ax+by)({\rm mod}\,n)$, where $a+b=1({\rm mod}\,n)$, $a+bk=0({\rm mod}\,n)$ and $(k,n)=1$. We describe the structure of various types of idempotent $k$-translatable quasigroups, some of which are connected with affine geometry and combinatorial algebra, and their parastrophes. We prove that such parastrophes are also idempotent $k$-translatable quasigroups and determine when they are of the same type as the original quasigroup. In addition, we find several different necessary and sufficient conditions making a $k$-translatable quasigroup quadratical.
 }
\footnote{\textsf{2010 Mathematics Subject Classification:} 20M15, 20N02}
\footnote{\textsf{Keywords:} Quasigroup, quadratical quasigroup, $k$-translability.}


\section{Introduction}

Many collections of algebraic objects are {\em equivalent} to others. That is, there is a bijection between the collections $\bf{V}$ and $\bf{V}'$, which lets us move back and forth freely between them. When this is the case, we write $\bf{V}\equiv \bf{V}'$. This equivalence may hold only up to isomorphism. In this case we write $\bf{V}\approx\bf{V}'$.

For example, if {\bf G}, {\bf I}, {\bf H}, {\bf WQ}, {\bf GHI}, {\bf GH}, {\bf EBM} and {\bf NWQ} are the collections of groups, inverse semigroups, heaps, Ward quasigroups, generalized heaps that appear as the standard ternary operation of an inverse semigroup, generalized heaps, equivalence bimodules and the natural ternary operations of Ward quasigroups, then ${\bf G}\equiv{\bf WQ}$,  ${\bf G}\approx {\bf H}$, ${\bf G}\equiv{\bf NWQ}$, ${\bf NWQ}\approx{\bf H}$, ${\bf H}\approx{\bf WQ}$, ${\bf GHI}\equiv{\bf I}$ and ${\bf GH}\approx{\bf EBM}$ (cf. \cite{6,7,10}).

One hope might be that one could use results in, or properties of, one algebraic system to assist in proving important results that are difficult to prove if we remain only within an equivalent ($\equiv$  or $\approx$) algebraic system. It would be useful also if results in one system implied interesting results in the equivalent system.

In this paper, we consider the collection $\bf{QQ}$ of all quadruples $(Q,\cdot,L_s,R_s)$, where $(Q,\cdot)$ is a quadratical quasigroup with a pair $(L_s,R_s)$ of left and right translations respectively for some $s\in Q$, and the collection $\bf{A}$ of all quadruples $(Q,+,\lambda,\rho)$, where $(Q,+)$ is a commutative group with a pair of (commuting) automorphisms $(\lambda,\rho)$ such that $\lambda x+\rho x=x$ and $\rho\lambda x+\rho\lambda x=x$ for all $x\in Q$. Note that from $\lambda x+\rho x=x$ it follows that $\lambda$ is an automorphism of $(Q,+)$ if and only if $\rho$ is an automorphism of $(Q,+)$. In this case $\lambda\rho=\rho\lambda$. We prove that ${\bf QQ}\equiv{\bf A}$.

One major result of this paper is that a quadratical quasigroup of order $n$ is $k$-translatable if and only if it is induced by the additive group of integers modulo $n=4t+1$, where $t$ is some positive integer (Theorem \ref{T-36}). This is proved using the equivalence between $\bf{QQ}$ and $\bf{A}$; in fact, the proof jumps back and forth between these equivalent collections. Moreover, in Section $3$ we find necessary and sufficient conditions on a $k$-translatable quasigroup for it to be quadratical (Theorems \ref{T-32} and \ref{T-45}).

   In Section $4$ we prove the main result that a groupoid of order $n$ is an idempotent $k$-translatable quasigroup if and only if its multiplication is given by $x\cdot y=(ax+by)({\rm mod}\,n)$, $a+b=1({\rm mod}\,n)$, $a+bk=0({\rm mod}\,n)$ and $(k,n)=1$ (Lemma \ref{L-41}). Theorem \ref{T-425} determines the structure of $k$-translatable quadratical, hexagonal, $GS$, right modular, left modular, Stein, $ARO$ and $C3$ quasigroups. The structure of the parastrophes of such quasigroups is determined in Theorem \ref{T-51}. In Theorem \ref{T-trans} we prove that these parastrophes are all idempotent translatable quasigroups and find the value of translatability. In Table $3$ we determine when a parastrophe of an idempotent $k$-translatable quadratical (hexagonal, $GS$, $ARO$, $C3$, right modular or Stein) quasigroup is quadratical (hexagonal, $GS$, $ARO$, $C3$, right modular or Stein). In Table $4$ we find necessary and sufficient conditions for a parastrophe to be quadratical (hexagonal, $GS$, $ARO$, $C3$, right modular or Stein).

\section{Preliminary definitions and results}  

Recall that a groupoid $(Q,\cdot)$ has {\it property A} if it satisfies the identity $xy \cdot x = zx \cdot yz$ \cite{3, 9}. 
It is called {\it right solvable $($left solvable$)$} if for any $\{a,b\} \subseteq Q$ there exists a unique $x\in G$ such that $ax = b$ ($xa = b$). It is {\it left $($right$)$ cancellative} if $xy = xz$ implies $y = z$ ($yx = zx$ implies $y = z$). It is a {\it quasigroup} if it is left and right solvable. 

Note that right solvable groupoids are left cancellative, left solvable groupoids are right cancellative and quasigroups are cancellative.

Volenec \cite{9} defined {\it quadratical groupoids} as right solvable groupoids satisfying property $A$ and proved some basic properties of these groupoids. Below, we list several such properties. Throughout the remainder of this paper we will use, without mention, the fact that quadratical groupoids are quasigroups that satisfy these properties.

\begin{theorem}\label{T-21} A quadratical groupoid is left solvable and satisfies the following identities:

$\begin{array}{rlll}
(1)&x = {x^2}, & (idempotency)\\
(2)&x \cdot yx = xy \cdot x , &(elasticity, \ flexibility)\\
(3)&x \cdot yx = xy \cdot x=yx \cdot y, &(strong \ elasticity)\\
(4)&yx \cdot xy = x, &(bookend)\\
(5)&x \cdot yz = xy \cdot xz, &(left \ distributivity)\\
(6)&xy \cdot z = xz \cdot yz, &(right \ distributivity)\\
(7)&xy \cdot zw = xz \cdot yw, &(mediality)\\
(8)&x(y \cdot yx) = (xy \cdot x)y, &\\
(9)&(xy \cdot y)x = y(x \cdot yx), &\\
(10)&xy = zw\,\Longleftrightarrow\,yz = wx, \ \ \ \ &(alterability).
\end{array}$
\end{theorem}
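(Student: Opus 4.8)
The plan is to use nothing beyond the two standing hypotheses, that $(Q,\cdot)$ is right solvable (hence, as already noted, left cancellative) and satisfies property $A$, which I write as $(xy)x=(zx)(yz)$. I would first harvest the short identities by specializing the three free variables and cancelling on the left. Putting $x=y=z$ gives $(xx)x=(xx)(xx)$, so left cancellation yields $x=xx$, which is idempotency~(1). With idempotency in hand, setting $z=x$ collapses the right side $(xx)(yx)$ to $x\cdot yx$ and gives elasticity~(2), while setting $z=y$ collapses $(yx)(yy)$ to $yx\cdot y$ and supplies the missing piece $(xy)x=(yx)y$ of strong elasticity~(3). Finally, setting $y=x$ and renaming the remaining free variable $z$ to $y$ reduces the left side to $x$ by idempotency and produces $x=yx\cdot xy$, which is the bookend law~(4).

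Next I would upgrade cancellation and prove solvability. For right cancellativity, suppose $ma=na$; the bookend law gives $(ma)(am)=a=(na)(an)$, and substituting $na=ma$ into the second equation yields $(ma)(am)=(ma)(an)$, so two applications of left cancellation force $m=n$. For left solvability, given $a,b$ I would use right solvability to choose $y$ with $by=a$; the bookend instance $(yb)(by)=b$ then reads $(yb)a=b$, so $x=yb$ solves $xa=b$, and uniqueness is the right cancellativity just obtained. Hence every right translation is a bijection, $(Q,\cdot)$ is a genuine quasigroup, and from this point on I may cancel on either side.

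For the remaining identities I would exploit the fact that property $A$ asserts that $(zx)(yz)$ is \emph{independent of $z$}, being always equal to $(xy)x$. The plan is to feed carefully chosen substitutions into this relation and eliminate the auxiliary elements using the two cancellation laws, deriving first the distributive laws~(5) and~(6), then mediality~(7), and finally reading off identities~(8), (9) and the alterability equivalence~(10) from distributivity, mediality and cancellation. I expect the distributive laws, and the mediality that packages them, to be the real obstacle: unlike~(1)--(4), neither side of $x\cdot yz=xy\cdot xz$ is a direct instance of the pattern $(zx)(yz)$ (in that pattern the repeated element sits at the front of the first factor and the back of the second, whereas distributivity repeats an element at a single position), so these cannot be obtained by one substitution and instead require combining several instances of property $A$ through the cancellation laws. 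An alternative endgame, once mediality is secured, would be to invoke the Toyoda representation to realize $(Q,\cdot)$ as an affine operation $x\cdot y=\lambda x+\rho y$ over an abelian group, after which~(5)--(10) become routine computations; but since obtaining mediality is itself the crux, the synthetic substitution route is the one I would pursue.
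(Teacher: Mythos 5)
Your attempt has to be judged as a self-contained proof, because the paper itself contains no proof of Theorem \ref{T-21}: the properties are quoted from Volenec's paper \cite{9}, where quadratical groupoids were introduced and these identities were established. Measured against that standard, the first half of your proposal is correct and cleanly done. Specializing property $A$, namely $xy\cdot x=zx\cdot yz$, at $x=y=z$ and cancelling on the left (right solvability does give left cancellativity) yields idempotency; $z=x$ and $z=y$ then give elasticity and strong elasticity; $y=x$ gives the bookend law. Your derivation of right cancellativity from bookend plus two left cancellations is valid, and so is the construction of a solution of $xa=b$ as $x=yb$ where $by=a$. So left solvability and identities $(1)$--$(4)$ are genuinely proved.

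The genuine gap is that identities $(5)$--$(10)$ are never derived. For left and right distributivity, mediality, $(8)$, $(9)$ and alterability you state only an intention --- ``feed carefully chosen substitutions into this relation and eliminate the auxiliary elements using the two cancellation laws'' --- and you yourself identify distributivity and mediality as ``the real obstacle'' and ``the crux'' without exhibiting a single substitution scheme that produces them. The difficulty you point out is real: neither side of $x\cdot yz=xy\cdot xz$ is an instance of the pattern $(zx)(yz)$, so one must combine several instances of property $A$ through cancellation, and in Volenec's treatment this is exactly where the work lies, via a chain of auxiliary identities; it does not follow by inspection from $(1)$--$(4)$. The fallback you mention, Toyoda's representation of medial quasigroups as affine operations over an abelian group, cannot close the gap because it presupposes mediality, which is precisely what is unproven (and $(10)$, for instance, then still requires the quadratical constraint on the two automorphisms, not mediality alone). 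As it stands, the proposal proves left solvability and $(1)$--$(4)$, but the theorem's remaining six identities --- the substantive content that the paper outsources to \cite{9} --- are left as a plan rather than a proof.
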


\begin{definition}\label{D-22}
{\bf QQ} is defined as the collection of quadruples $(Q,\cdot,\lambda,\rho)$, where $(Q,\cdot)$ is a quadratical quasigroup with commuting automorphisms $\lambda$ and $\rho$ satisfying for all $x,y,z\in Q$ and some $w\in Q$ the following conditions:

\medskip
$\begin{array}{rlll}
(11)&xy\cdot \lambda z=\rho x\cdot yz,\\[2pt]    
(12)&\lambda x\cdot\rho x=x,\\[2pt]    
(13)&\rho^{-1}x\cdot\lambda^{-1}y=\rho^{-1}y\cdot\lambda^{-1}x,\\[2pt]   
(14)&\rho^{-1}x\cdot\lambda^{-1}w=y .  
\end{array}$
\end{definition}

\begin{proposition}\label{P-23} If $(Q,\cdot,\lambda,\rho)\in\bf{QQ}$, then $\lambda=L_{\rho x\cdot\lambda x}$, $\rho=R_{\rho x\cdot\lambda x}$ and 
$\rho x\cdot\lambda x=\rho y\cdot\lambda y$ for all $x,y\in Q$. 
\end{proposition}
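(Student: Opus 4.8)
The plan is to fix $a\in Q$, put $s=\rho a\cdot\lambda a$, and prove the three assertions in the order $\lambda=L_s$, then the constancy of $\rho x\cdot\lambda x$, then $\rho=R_s$; here $L_s(y)=s\cdot y$ and $R_s(y)=y\cdot s$. Since $(Q,\cdot)$ is quadratical and $\lambda,\rho$ are commuting automorphisms, I will use freely the identities of Theorem~\ref{T-21}, above all idempotency~(1), strong elasticity~(3), bookend~(4), the two distributive laws~(5),(6) and alterability~(10).

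First I would squeeze the defining conditions of {\bf QQ} into workable form. Setting $z=y$ in (11) and using $y^2=y$ gives the reduction $xy\cdot\lambda y=\rho x\cdot y$, which rewrites every left translate $\rho x\cdot y$ through $\lambda$. Applying alterability~(10) to (11) produces $\lambda z\cdot\rho x=yz\cdot xy$, and the choice $y=x$, after simplifying with idempotency, sharpens this to $\lambda z\cdot\rho x=xz\cdot x$. In addition I would recast (13): writing $\theta=\rho\lambda^{-1}$ and substituting $\rho u,\lambda v$ for the variables turns (13) into the symmetry $uv=\theta^{-1}v\cdot\theta u$, valid for all $u,v$. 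This last identity is the device that lets a left-handed relation be transported to a right-handed one, which is exactly what the passage between $L_s$ and $R_s$ requires.

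The heart of the argument is the equality $\lambda z=s\cdot z$ for all $z$. I would expand $s\cdot z=(\rho a\cdot\lambda a)\cdot z=(\rho a\cdot z)(\lambda a\cdot z)$ by right distributivity~(6), replace the first factor using $\rho a\cdot z=az\cdot\lambda z$ from the reduction above, and then collapse the resulting product to $\lambda z$ by regrouping its factors around $\lambda z$ and appealing to bookend~(4) and strong elasticity~(3), with the symmetry $uv=\theta^{-1}v\cdot\theta u$ supplying the step that cannot be obtained from the left-handed identities alone. I expect this consolidation---keeping the non-associative bracketing correct and invoking the automorphism property of $\lambda$ and $\rho$ at the right moment---to be the main obstacle; the rest is bookkeeping.

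The two remaining claims are then immediate. The derivation of $\lambda=L_s$ is valid for every $a$, and the assignment $t\mapsto L_t$ is injective, since $t\cdot y=t'\cdot y$ for one $y$ already forces $t=t'$ by right cancellation; hence $s=\rho a\cdot\lambda a$ cannot depend on $a$, which is precisely $\rho x\cdot\lambda x=\rho y\cdot\lambda y$. Finally, reading (12) as $sx\cdot\rho x=x$ and noting that bookend~(4) gives $sx\cdot xs=x$, left cancellation yields $\rho x=x\cdot s=R_s(x)$, so $\rho=R_{\rho x\cdot\lambda x}$ as required.
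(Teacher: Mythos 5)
Your proposal has a genuine gap at its centre: the identity $\lambda z=s\cdot z$ (for a fixed $a$, $s=\rho a\cdot\lambda a$, and arbitrary $z$) is never actually derived. After your two correct reductions of (11) (namely $xy\cdot\lambda y=\rho x\cdot y$ and $\lambda z\cdot\rho x=xz\cdot x$) and the correct recasting of (13), you expand $s\cdot z=(\rho a\cdot z)\cdot(\lambda a\cdot z)=(az\cdot\lambda z)\cdot(\lambda a\cdot z)$ and then announce that this ``collapses'' to $\lambda z$ by bookend, strong elasticity and the $\theta$-symmetry --- but the collapse is exactly where all the difficulty lives, and you flag it yourself as the main obstacle rather than carrying it out. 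To see that nothing has been gained: bookend gives $\lambda z=(az\cdot\lambda z)\cdot(\lambda z\cdot az)$, so by left cancellation your claim is equivalent to the identity $\lambda a\cdot z=\lambda z\cdot az$ for all $a,z$, which (comparing with $(\rho z\cdot\lambda z)\cdot z=\lambda z$ and using right cancellation) simultaneously encodes the translation property \emph{and} the constancy of $\rho x\cdot\lambda x$. The entire content of the proposition has thus been compressed into one unproven equation; your surrounding steps (injectivity of $t\mapsto L_t$ giving constancy, and $\rho=R_s$ from (12), bookend and left cancellation) are correct but conditional on it.

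The difficulty is self-inflicted by the order you chose: by insisting on $\lambda=L_s$ with a \emph{fixed} translation element before knowing constancy, you must prove something strictly stronger than the pointwise bookend identity. The paper reverses the order, and then both steps are one-liners. First, applying the automorphism $\rho\lambda$ to (13) and using $\lambda\rho=\rho\lambda$ gives $\lambda x\cdot\rho y=\lambda y\cdot\rho x$, whence alterability (10) yields $\rho x\cdot\lambda x=\rho y\cdot\lambda y$; this is the constancy. Then bookend (4) together with (12) gives $\lambda x=(\rho x\cdot\lambda x)\cdot(\lambda x\cdot\rho x)=(\rho x\cdot\lambda x)\cdot x$ and $\rho x=(\lambda x\cdot\rho x)\cdot(\rho x\cdot\lambda x)=x\cdot(\rho x\cdot\lambda x)$, and the already-established constancy turns these pointwise equations into $\lambda=L_s$ and $\rho=R_s$. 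If you want to salvage your plan, prove the constancy statement first by this (13)-plus-alterability argument; your last two paragraphs then go through verbatim and the problematic expansion of $s\cdot z$ becomes unnecessary.
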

\begin{proof} For all $x,y\in Q$, by (13), $\rho^{-1}x\cdot\lambda^{-1}y=\rho^{-1}y\cdot\lambda^{-1}x$. Hence, $\rho\lambda(\rho^{-1} x\cdot\lambda^{-1} y)=\rho\lambda(\rho^{-1} y\cdot\lambda^{-1} x)$. Since $\rho\lambda=\lambda\rho$, $\lambda\rho^{-1}=\rho^{-1}\lambda$, so $\lambda x\cdot\rho y=\lambda y\cdot\rho x$. By alterability, $\rho y\cdot\lambda y=\rho x\cdot \lambda x$. By (12), $\lambda x\cdot\rho x=x$. But $(Q,\cdot)$ is bookend and so $\lambda x=(\rho x\cdot\lambda x)\cdot(\lambda x\cdot\rho x)=(\rho x\cdot\lambda x)\cdot x$ and $\rho x=(\lambda x\cdot\rho x)\cdot(\rho x\cdot\lambda x)=x\cdot (\rho x\cdot\lambda x)$. Therefore, $\rho=R_{\rho x\cdot \lambda x}$ and $\lambda=L_{\rho x\cdot\lambda x}$.  
\end{proof}

\begin{proposition}\label{P-24} If $(Q,\cdot)$ is a quadratical quasigroup, then $(Q,\cdot,L_s,R_s)\in \bf{QQ}$ for all $s\in Q$.
\end{proposition}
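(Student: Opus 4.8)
The plan is to set $\lambda=L_s$ and $\rho=R_s$ and to verify each clause of Definition \ref{D-22} by translating it into one of the Volenec identities of Theorem \ref{T-21}. First I would check that $\lambda$ and $\rho$ are commuting automorphisms of $(Q,\cdot)$. Putting $x=s$ in left distributivity (5) gives $s\cdot yz=sy\cdot sz$, so $\lambda$ is an endomorphism, and putting $z=s$ in right distributivity (6) gives $xy\cdot s=xs\cdot ys$, so $\rho$ is an endomorphism; since $(Q,\cdot)$ is a quasigroup, $L_s$ and $R_s$ are bijective, hence automorphisms. Putting $x=s$ in elasticity (2) yields $s\cdot ys=sy\cdot s$, i.e. $L_sR_s=R_sL_s$, so they commute.

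Conditions (11) and (12) are then immediate substitutions. Writing mediality (7) as $ab\cdot cd=ac\cdot bd$ and choosing $a=x$, $b=y$, $c=s$, $d=z$ gives $xy\cdot sz=xs\cdot yz$, which is exactly $xy\cdot\lambda z=\rho x\cdot yz$, i.e. (11). Putting $y=s$ in bookend (4) gives $sx\cdot xs=x$, which is $\lambda x\cdot\rho x=x$, i.e. (12).

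For (13) I would not attack the inverse operations head-on but instead run the computation of Proposition \ref{P-23} backwards. Mediality with $a=s$, $b=x$, $c=y$, $d=s$ gives the inverse-free identity $\lambda x\cdot\rho y=\lambda y\cdot\rho x$. Applying the automorphism $(\rho\lambda)^{-1}=\lambda^{-1}\rho^{-1}$ to both sides and using $\rho\lambda=\lambda\rho$ (so that $(\rho\lambda)^{-1}\lambda=\rho^{-1}$ and $(\rho\lambda)^{-1}\rho=\lambda^{-1}$) turns it into $\rho^{-1}x\cdot\lambda^{-1}y=\rho^{-1}y\cdot\lambda^{-1}x$, which is (13). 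For (14) I would exhibit the distinguished element as the common value $\rho x\cdot\lambda x$: bookend shows $\rho x\cdot\lambda x=xs\cdot sx=s$ for every $x$, so I take $w=s$. Idempotency (1) gives $s\cdot s=s$, hence $L_s^{-1}s=R_s^{-1}s=s$, and therefore $\rho^{-1}x\cdot\lambda^{-1}w=\rho^{-1}x\cdot s=R_s\bigl(R_s^{-1}x\bigr)=x$, producing the required $w$.

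The only genuinely delicate points are (13) and (14), where the inverse automorphisms appear. The idea that removes the difficulty is to reduce both to inverse-free identities (mediality and bookend) and transport them by the automorphism $(\rho\lambda)^{-1}$, exactly reversing the argument of Proposition \ref{P-23}; everything else is a direct reading of the Volenec identities with $\lambda=L_s$ and $\rho=R_s$.
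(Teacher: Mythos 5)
Your verification of the automorphism properties, of the commutation $L_sR_s=R_sL_s$, and of conditions (11), (12) and (13) is correct and is essentially the paper's own argument; in particular your treatment of (13) --- deriving $\lambda x\cdot\rho y=\lambda y\cdot\rho x$ from mediality and transporting it by the automorphism $(\rho\lambda)^{-1}$ --- is the same computation the paper performs, just read in the opposite direction (the paper applies $L_sR_s$ to $R_s^{-1}x\cdot L_s^{-1}y$ and then cancels).

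However, your argument for (14) proves the wrong statement. In Definition \ref{D-22} the variables $x,y,z$ are universally quantified and only $w$ is existential, so (14) demands: for \emph{every} pair $x,y\in Q$ there exists $w\in Q$ (depending on $x$ and $y$) with $\rho^{-1}x\cdot\lambda^{-1}w=y$. This is precisely right solvability of the operation $x\odot y=\rho^{-1}x\cdot\lambda^{-1}y$, and it is used in exactly this form in the proof of Theorem \ref{T-214} to show that $(Q,\odot)$ is a group. Your choice $w=s$ gives $\rho^{-1}x\cdot\lambda^{-1}s=\rho^{-1}x\cdot s=x$, i.e.\ it solves the equation only in the special case $y=x$; for a general target $y$ no witness has been produced, and indeed no single fixed $w$ can serve all $y$ at once, so any argument that names one distinguished element must fail. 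The repair is short and is what the paper does: by right solvability of $(Q,\cdot)$ there is $u\in Q$ with $R_s^{-1}x\cdot u=y$, and since $L_s^{-1}$ is a bijection one can write $u=L_s^{-1}w$ with $w=L_su$, giving $R_s^{-1}x\cdot L_s^{-1}w=y$ as required.
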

\begin{proof} Right and left distributivity imply that $R_s$ and $L_s$ are endomorphisms for each $s\in Q$. Thus, they are automorphisms of $(Q,\cdot)$. Elasticity implies that $R_s$ and $L_s$ commute. By mediality, $xy\cdot L_sz=R_sx\cdot yz$, so $(11)$ is valid. Bookend implies $x=L_sx\cdot R_sx$ , so $(12)$ is valid. Mediality also implies $L_sx\cdot R_sy=L_sy\cdot R_sx$. Then, since $R_sL_s=L_sR_s$ implies $R_sL_s^{-1}=L_s^{-1}R_s$, $\,L_sR_s(R_s^{-1}x\cdot L_s^{-1}y)=L_sx\cdot R_sy=L_sy\cdot R_sx=L_sR_s(R_s^{-1}y\cdot L_s^{-1}x)$. Hence $R_s^{-1}x\cdot L_s^{-1}y=R_s^{-1}y\cdot L_s^{-1}x$ and $(13)$ is valid. Finally, right solvability implies that for all $x,y\in Q$ there exists $u\in Q$ such that $R_s^{-1}x\cdot u=y$. Since $L_s^{-1}$ is an automorphism, there exists $w\in Q$ such that $R_s^{-1}x\cdot L_s^{-1}w=y$  and $(14)$ is valid. By definition then, $(Q,\cdot,L_s,R_s)\in \bf{QQ}$.  
\end{proof}

\begin{definition}\label{D-25}  ${\bf A}$ is defined as the collection of quadruples $(A,+,\lambda,\rho)$, where $(A,+)$ is a $2$-divisible commutative group with automorphisms $\lambda$ and $\rho$ such that:

\medskip
$\begin{array}{rlll}
(15)&\rho x+\lambda x=x,\\[2pt] 
(16)&\lambda\rho x+\lambda\rho x=x \\[2pt] 
\end{array}$

\noindent
for all $x\in A$.
\end{definition}

Note that $(15)$ and $\lambda,\rho$ automorphisms imply $\rho\lambda x=\lambda x-\lambda^2x=\lambda\rho x$, so $\rho\lambda=\lambda\rho$.

\begin{lemma}\label{L-26}
If $(A,+,\lambda,\rho)\in{\bf A}$, then for each $x\in A$ there exists only one $\overline{x}\in A$ such that $x=\overline{x}+\overline{x}$.
\end{lemma}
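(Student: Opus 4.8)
The plan is to separate the assertion into its two halves, existence and uniqueness, and to recognize that essentially all the content lies in the uniqueness half. Existence is immediate: since $(A,+)$ is $2$-divisible, for each $x$ there is some $y$ with $x=y+y$. In fact condition $(16)$ already exhibits such an element explicitly, since $(16)$ reads $\lambda\rho x+\lambda\rho x=x$, so one may simply take $\overline{x}=\lambda\rho x$. I would therefore spend the proof establishing uniqueness.

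For uniqueness I would show that the doubling map $d\colon A\to A$, $d(a)=a+a$, is injective. So I would suppose $a+a=b+b$ for elements $a,b\in A$. Using commutativity of $+$, this rearranges to $(a-b)+(a-b)=0$. Writing $w=a-b$, the goal becomes to prove that $w=0$; that is, that $(A,+)$ has no nonzero element of order dividing $2$.

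This is exactly the step where condition $(16)$ does the work. Since $\lambda$ and $\rho$ are automorphisms of $(A,+)$, the composite $\lambda\rho$ is in particular additive, so $\lambda\rho w+\lambda\rho w=\lambda\rho(w+w)=\lambda\rho(0)=0$. On the other hand, applying $(16)$ to the element $w$ gives $\lambda\rho w+\lambda\rho w=w$. Comparing the two expressions forces $w=0$, hence $a=b$, which is the desired uniqueness.

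I do not expect any genuine technical obstacle, as the computation is short; the point worth flagging is conceptual rather than difficult. Namely, $2$-divisibility alone does not yield uniqueness (Pr\"ufer-type groups are $2$-divisible yet possess $2$-torsion), so the uniqueness clause must rely on $(16)$. The role of $(16)$ is precisely to provide an additive left inverse $\lambda\rho$ to the doubling map, which simultaneously witnesses existence and eliminates $2$-torsion, thereby killing any nonzero $w$ with $w+w=0$.
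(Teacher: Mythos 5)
Your proof is correct and rests on exactly the same mechanism as the paper's: condition $(16)$ together with additivity of $\lambda\rho$ makes $\lambda\rho$ a left inverse of the doubling map $a\mapsto a+a$. The paper phrases this slightly more directly---applying $\lambda\rho$ to both sides of $x=y+y$ yields $y=\lambda\rho x$ at once, pinning down any solution explicitly---whereas you pass through the equivalent statement that $(A,+)$ has no $2$-torsion; the difference is purely cosmetic.
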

\begin{proof}
Indeed, by $(16)$, for $x=y+y$ we have $\lambda\rho x=\lambda\rho(y+y)=\lambda\rho y+\lambda\rho y=y$. 
\end{proof}

\begin{definition}\label{D-27} For $(A,+,\lambda,\rho)\in {\bf A}$ we define new product on $A$ by putting:

\smallskip
$(17)$ \ $x\oplus y=\rho x+\lambda y$ \ \ for all $x,y\in A$. 

\smallskip\noindent
Similarly, for $(Q,\cdot,\lambda,\rho)\in{\bf QQ}$ we define:

\smallskip 
$(18)$ \ $x\odot y=\rho^{-1}x\cdot\lambda^{-1}y$ \ \ for all $x,y\in Q$.
\end{definition}
Then $(A,\oplus)$ and $(Q,\odot)$ are quasigroups.

\begin{proposition}\label{P-210} Suppose that $(A,+,\lambda,\rho)\in {\bf A}$  and $(Q,\cdot,\lambda,\rho)\in \bf{QQ}$. Then 

\medskip
$\begin{array}{rll}
(19)&(x\oplus y)+(z\oplus w)=(x+z)\oplus (y+w) \ \ for \ x,y,z,w\in A, \ and \\[2pt]
(20)&(x\cdot y)\odot(z\cdot w)=(x\odot z)\cdot (y\odot w) \ \ for \ x,y,z,w\in Q.
\end{array}$
\end{proposition}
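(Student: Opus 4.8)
The plan is to prove each of the two identities by a direct computation: expand both sides using the defining formulas $(17)$ and $(18)$, and then reduce one side to the other using only the structural facts already available, namely that $\lambda$ and $\rho$ are homomorphisms of the relevant operation together with commutativity of $+$ in the additive case and mediality (identity $(7)$ of Theorem~\ref{T-21}) in the multiplicative case. The two parts are logically independent, so I would treat them separately.

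For $(19)$, since $(A,+,\lambda,\rho)\in\mathbf{A}$, I would first expand the left-hand side via $(17)$ to get $(x\oplus y)+(z\oplus w)=(\rho x+\lambda y)+(\rho z+\lambda w)$. Because $(A,+)$ is a commutative group, the four summands may be regrouped freely as $(\rho x+\rho z)+(\lambda y+\lambda w)$. Now $\lambda$ and $\rho$ are automorphisms of $(A,+)$, hence additive, so $\rho x+\rho z=\rho(x+z)$ and $\lambda y+\lambda w=\lambda(y+w)$. Reassembling via $(17)$ yields $\rho(x+z)+\lambda(y+w)=(x+z)\oplus(y+w)$, which is precisely the right-hand side.

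For $(20)$, since $(Q,\cdot,\lambda,\rho)\in\mathbf{QQ}$, I would expand the left-hand side via $(18)$ to get $(x\cdot y)\odot(z\cdot w)=\rho^{-1}(x\cdot y)\cdot\lambda^{-1}(z\cdot w)$. As $\lambda$ and $\rho$ are commuting automorphisms of $(Q,\cdot)$, so are $\lambda^{-1}$ and $\rho^{-1}$, which gives $\rho^{-1}(x\cdot y)=\rho^{-1}x\cdot\rho^{-1}y$ and $\lambda^{-1}(z\cdot w)=\lambda^{-1}z\cdot\lambda^{-1}w$, so the left-hand side becomes $(\rho^{-1}x\cdot\rho^{-1}y)\cdot(\lambda^{-1}z\cdot\lambda^{-1}w)$. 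Applying mediality $ab\cdot cd=ac\cdot bd$ with $a=\rho^{-1}x$, $b=\rho^{-1}y$, $c=\lambda^{-1}z$, $d=\lambda^{-1}w$ produces $(\rho^{-1}x\cdot\lambda^{-1}z)\cdot(\rho^{-1}y\cdot\lambda^{-1}w)$, which by $(18)$ equals $(x\odot z)\cdot(y\odot w)$, the right-hand side.

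I do not anticipate a genuine obstacle here, as both statements are essentially bookkeeping once the right structural facts are invoked. The only point requiring care is correctly identifying which operation $\lambda$ and $\rho$ are automorphisms of in each case (the additive $+$ in $\mathbf{A}$ versus the multiplicative $\cdot$ in $\mathbf{QQ}$) and, in the latter case, passing to the inverse automorphisms. The substantive content of $(20)$ is mediality, so the one step to double-check is that the bracketing obtained after expanding through $(18)$ matches the medial pattern exactly, i.e.\ that the variables line up with $(7)$ as indicated above.
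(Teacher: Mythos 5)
Your proof is correct and follows essentially the same route as the paper's: expand both sides via $(17)$ and $(18)$, then use that $\lambda,\rho$ (and hence $\lambda^{-1},\rho^{-1}$) are automorphisms of the respective operation, together with commutativity of $+$ for $(19)$ and mediality of the quadratical quasigroup for $(20)$. The paper merely compresses these steps into a single chain of equalities, leaving the automorphism and mediality applications implicit, whereas you spell them out.
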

\begin{proof} We have $(x\oplus y)+(z\oplus w)=(\rho x+\lambda y)+(\rho z+\lambda w)=\rho(x+z)+\lambda(y+w)=(x+z)\oplus (y+w)$ and 
$(x\cdot y)\odot (z\cdot w)=\rho^{-1}(x\cdot y)\cdot\lambda^{-1}(z\cdot w)=(\rho^{-1}x\cdot \lambda^{-1}z)\cdot (\rho^{-1}y\cdot\lambda^{-1}w)=(x\odot z)\cdot (y\odot w)$. 
\end{proof}

The following Lemma follows from the proof of the main Theorem in \cite{2}.

\begin{lemma}\label{L-29}
If $(A,+,\lambda,\rho)\in{\bf A}$, then $\lambda\rho=\rho\lambda$, $(A,\oplus)$ is a quadratical quasigroup and $(x\oplus y)\oplus z= (z\oplus x)\oplus (y\oplus z)$ for all $x,y,z\in A$.
\end{lemma}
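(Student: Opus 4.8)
The plan is to push everything through the defining formula $(17)$ and reduce each groupoid assertion to a $\mathbb{Z}$-linear identity in the two automorphisms $\lambda,\rho$ of $(A,+)$. Writing $x\oplus y=\rho x+\lambda y$, any $\oplus$-word in the variables $x,y,z$ expands to a sum $\alpha x+\beta y+\gamma z$ whose coefficients $\alpha,\beta,\gamma$ are words in $\lambda,\rho$. Since $(A,+)$ is a group, such a linear equation holds for all $x,y,z$ if and only if the three endomorphism-coefficients agree on each side (put two of the variables equal to $0$ to isolate the third). The whole lemma therefore becomes operator bookkeeping controlled by just two relations: $\rho+\lambda=\mathrm{id}$, coming from $(15)$, and $2\lambda\rho=\mathrm{id}$, coming from $(16)$, together with the invertibility of $\lambda$ and $\rho$.

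First I would record the commutativity $\lambda\rho=\rho\lambda$: from $(15)$ one has $\rho=\mathrm{id}-\lambda$, whence $\rho\lambda=\lambda-\lambda^2=\lambda\rho$ (this is the remark after Definition \ref{D-25}). Next, $(A,\oplus)$ is a quasigroup: the equation $a\oplus x=b$ is $\lambda x=b-\rho a$, uniquely solvable as $x=\lambda^{-1}(b-\rho a)$ because $\lambda$ is an automorphism, and symmetrically $x\oplus a=b$ has the unique solution $x=\rho^{-1}(b-\lambda a)$. Thus $(A,\oplus)$ is right and left solvable.

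By Volenec's definition of a quadratical groupoid \cite{9} it then suffices to verify property $A$, namely $(x\oplus y)\oplus x=(z\oplus x)\oplus(y\oplus z)$. Expanding by $(17)$ gives $\rho^2x+\rho\lambda y+\lambda x$ on the left and $\rho^2z+\rho\lambda x+\lambda\rho y+\lambda^2z$ on the right; the $y$-coefficients already coincide because $\lambda\rho=\rho\lambda$. Comparing the coefficients of $x$ and of $z$ reduces property $A$ to the two operator equations $\rho^2+\lambda-\rho\lambda=0$ and $\rho^2+\lambda^2=0$. Substituting $\rho=\mathrm{id}-\lambda$ turns each of them into $\mathrm{id}-2\lambda+2\lambda^2=\mathrm{id}-2\lambda\rho$, which is $0$ precisely by $(16)$. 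Hence property $A$ holds, so $(A,\oplus)$ is a quadratical quasigroup, and Theorem \ref{T-21} supplies idempotency, mediality and the distributive laws at no extra cost.

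The remaining identity is then dispatched by exactly the same mechanism: expand both sides by $(17)$, cancel the $y$-coefficients (equal by commutativity), and match the coefficients of the surviving variables; this reduces it to operator relations that follow immediately from $\rho+\lambda=\mathrm{id}$ and $2\lambda\rho=\mathrm{id}$, the representative collapse being $\lambda(\rho+\lambda)=\lambda$. The heart of the argument, and the one step I would write out in full, is the verification of property $A$: the point to highlight is that the two a priori independent coefficient conditions both reduce to the single scalar relation $2\lambda\rho=\mathrm{id}$, so that $(16)$ is exactly the hypothesis that forces quadraticity. Everything else is linear bookkeeping, once the legitimacy of comparing coefficients (valid because $A$ is a group and automorphisms fix $0$) has been noted; the uniqueness of halves from Lemma \ref{L-26} is the shadow of this same relation.
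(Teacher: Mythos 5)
Your treatment of the first two assertions is correct and, unlike the paper (which gives no internal argument at all for this lemma, merely citing the main theorem of \cite{2}), it is self-contained: the derivation of $\lambda\rho=\rho\lambda$ from $(15)$, the explicit solutions $x=\lambda^{-1}(b-\rho a)$ and $x=\rho^{-1}(b-\lambda a)$ establishing that $(A,\oplus)$ is a quasigroup, and the coefficient-matching verification of property $A$ — where both coefficient conditions collapse to $\mathrm{id}-2\lambda+2\lambda^2=\mathrm{id}-2\lambda\rho=0$, i.e., exactly to $(16)$ — are all sound, and the legitimacy of comparing coefficients (set the other variables to $0$) is correctly noted.

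The gap is your last paragraph, and it is a real one. The third identity of the lemma, $(x\oplus y)\oplus z=(z\oplus x)\oplus(y\oplus z)$, does not ``follow immediately by the same mechanism'': expanding by $(17)$ gives $\rho^2x+\rho\lambda y+\lambda z$ on the left and $\rho\lambda x+\lambda\rho y+(\rho^2+\lambda^2)z$ on the right, so your own coefficient comparison demands $\rho^2=\rho\lambda$ (hence $\rho=\lambda$, since $\rho$ is invertible) and $\lambda=\rho^2+\lambda^2$; but $\rho^2+\lambda^2=\mathrm{id}-2\lambda+2\lambda^2=\mathrm{id}-2\lambda\rho=0$ by $(16)$, so this would force $\lambda=0$, which is absurd. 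The identity as printed is in fact false in every nontrivial member of ${\bf A}$: in $\mathbb{Z}_5$ with $\rho x=4x$, $\lambda x=2x$ one gets $(1\oplus 0)\oplus 0=1$ while $(0\oplus 1)\oplus(0\oplus 0)=3$. (Equivalently: property $A$ rewrites the right-hand side as $(x\oplus y)\oplus x$, so the printed identity plus left cancellation would yield $z=x$ for all $x,z$.) The statement is evidently a misprint, either for property $A$ itself, $(x\oplus y)\oplus x=(z\oplus x)\oplus(y\oplus z)$ — which you did verify in full — or for right distributivity $(x\oplus y)\oplus z=(x\oplus z)\oplus(y\oplus z)$, whose $z$-coefficient computation $\rho\lambda+\lambda^2=\lambda(\rho+\lambda)=\lambda$ is precisely the ``representative collapse'' you quote; this suggests you silently checked one of those identities rather than the one stated. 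What you should have done is carry out the expansion, observe that the printed identity fails, and flag it as a typo while proving the intended identity; as written, your final paragraph asserts a proof of a false statement, which is exactly the kind of error your coefficient method was designed to catch.
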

\begin{theorem}\label{T-211} If $(A,+,\lambda,\rho)\in{\bf A}$, then the following statements are equivalent:    
  
\medskip
$\begin{array}{rll}
(21)&x\oplus y=z\oplus w,\\[2pt]
(22)&x+(w\oplus y)=z+(y\oplus w) \ \ and \ \ y+(x\oplus z)=w+(z\oplus x). 
\end{array}$
\end{theorem}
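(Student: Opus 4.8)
The plan is to treat $\lambda$ and $\rho$ as endomorphisms of the abelian group $(A,+)$ and to reduce the whole equivalence to two identities in the endomorphism ring. First I observe that $(15)$ says exactly $\lambda+\rho=\mathrm{id}_A$, while $(16)$, together with the already-noted relation $\lambda\rho=\rho\lambda$, says $2\lambda\rho=\mathrm{id}_A$. Since $\lambda\rho$ is an automorphism, the doubling map $x\mapsto x+x$ is invertible (its inverse is $\lambda\rho$), so $A$ is uniquely $2$-divisible and I may cancel factors of $2$ in operator identities. From $\lambda=\mathrm{id}_A-\rho$ and $2\rho\lambda=\mathrm{id}_A$ a one-line computation gives $\rho\lambda-\rho^2=\lambda$, that is $\rho(\lambda-\rho)=\lambda$; by the symmetric computation $\lambda(\rho-\lambda)=\rho$. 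Equivalently, since $\lambda,\rho$ are invertible, $\rho^{-1}\lambda=\lambda-\rho$ and $\lambda^{-1}\rho=\rho-\lambda$. This is the only point at which $(16)$ is used, and producing these two identities is really the substance of the argument.

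Next I translate the three statements into the group $(A,+)$ by means of $(17)$, abbreviating $u=x-z$ and $v=w-y$. Using $x\oplus y=\rho x+\lambda y$, condition $(21)$ becomes $\rho(x-z)=\lambda(w-y)$, i.e. $\rho u=\lambda v$; call this $(\ast)$. Expanding the first equation of $(22)$ in the same way gives $(x-z)+(\rho-\lambda)(w-y)=0$, i.e. $u=(\lambda-\rho)v$, which I call $(\alpha)$; expanding the second gives $(y-w)+(\rho-\lambda)(x-z)=0$, i.e. $v=(\rho-\lambda)u$, which I call $(\beta)$. All of these reductions use only commutativity of $+$ and the defining formula $(17)$.

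The equivalence then falls out of the operator identities. Applying $\rho$ to $(\alpha)$ yields $\rho u=\rho(\lambda-\rho)v=\lambda v$, so $(\alpha)\Rightarrow(\ast)$; conversely, applying $\rho^{-1}$ to $(\ast)$ and using $\rho^{-1}\lambda=\lambda-\rho$ yields $u=(\lambda-\rho)v$, so $(\ast)\Rightarrow(\alpha)$. In exactly the same manner, applying $\lambda$ to $(\beta)$ and $\lambda^{-1}$ to $(\ast)$ shows $(\beta)\Leftrightarrow(\ast)$. Thus each of the two equations of $(22)$ is by itself equivalent to $(\ast)$, hence to $(21)$, and in particular their conjunction $(22)$ is equivalent to $(21)$, as required.

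The only genuine obstacle is the derivation of $\rho(\lambda-\rho)=\lambda$ and its mirror image: it is precisely here that condition $(16)$ becomes indispensable (without it the implication fails), and one must first note that doubling is cancellable before clearing the factor of $2$ produced by $2\rho\lambda=\mathrm{id}_A$. Everything after that is bookkeeping with the commutative group operation.
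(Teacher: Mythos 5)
Your proof is correct, and it takes a genuinely different route from the paper's. The paper proves $(21)\Rightarrow(22)$ by invoking Lemma \ref{L-29} (the nontrivial external fact that $(A,\oplus)$ is a quadratical, hence alterable, quasigroup) to obtain $y\oplus z=w\oplus x$, and then adds the two resulting equations; for $(22)\Rightarrow(21)$ it combines \emph{both} equations of $(22)$ to reach $2(\rho x+\lambda y)=2(\rho z+\lambda w)$ and finishes with unique $2$-divisibility (Lemma \ref{L-26}). You instead work entirely inside the endomorphism ring: from $\lambda+\rho=\mathrm{id}$ and $2\lambda\rho=\mathrm{id}$ you extract $\rho(\lambda-\rho)=\lambda$ and $\lambda(\rho-\lambda)=\rho$ (these identities do check out: $\rho(\lambda-\rho)=\rho(2\lambda-\mathrm{id})=2\rho\lambda-\rho=\mathrm{id}-\rho=\lambda$, and symmetrically), and then each of $(21)$, the first equation of $(22)$, and the second equation of $(22)$ reduces to a single relation between $u=x-z$ and $v=w-y$, interconvertible by applying $\rho^{\pm1}$ or $\lambda^{\pm1}$. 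Your translations of the three statements into $\rho u=\lambda v$, $u=(\lambda-\rho)v$, $v=(\rho-\lambda)u$ are all accurate. What your approach buys: it is self-contained (no appeal to the quadratical machinery of \cite{2}), it isolates exactly where $(16)$ enters, and it proves something strictly sharper, namely that \emph{each single} equation of $(22)$ is by itself equivalent to $(21)$, whereas the paper's argument needs the conjunction in the backward direction. What the paper's approach buys: given that Lemma \ref{L-29} is already established and thematically central (the whole section exploits ${\bf A}\equiv{\bf QQ}$), its forward direction is a two-line consequence of alterability, with no need to manufacture operator identities. One cosmetic point: in your derivation of $\rho(\lambda-\rho)=\lambda$ no cancellation of a factor of $2$ is actually required (the identity $2\rho\lambda=\mathrm{id}$ is used directly, not cancelled), so the preliminary remark about invertibility of doubling, while true and harmless, is not load-bearing in your argument.
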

\begin{proof} $(21)\Rightarrow (23)$. Since, by Lemma \ref{L-29}, $(A,\oplus)$ is quadratical, it is alterable. Hence, $y\oplus z=w\oplus x$. By definition, $x\oplus y=z\oplus w$ and $y\oplus z=w\oplus x$ imply that
$\rho x+\lambda y=\rho z+\lambda w$ \ and \ $\rho y+\lambda z=\rho w+\lambda x$. So, $(\rho x+\lambda y)+(\rho w+\lambda x)=(\rho z+\lambda w)+(\rho y+\lambda z)$ 
and $(\rho x+\lambda y)+(\rho y+\lambda z)=(\rho z+\lambda w)+(\rho w+\lambda x)$. This proves $(22)$.

$(22)\Rightarrow (21)$. By definition,

\medskip
$\begin{array}{rlll}
(23)&(\rho x+\lambda x)+(\rho w+\lambda y)=(\rho z+\lambda z)+(\rho y+\lambda w),\\[4pt]
(24)&(\rho y+\lambda y)+(\rho x+\lambda z)=(\rho w+\lambda w)+(\rho z+\lambda x).
\end{array}
$

\medskip\noindent
Then, 
$$
\rho y+\lambda z=(\rho z+\lambda w)+(\rho w+\lambda x)-(\rho x+\lambda y)=(\rho x+\lambda y)+(\rho w+\lambda x)-(\rho z+\lambda w).
$$
Therefore, $2(\rho x+\lambda y)=2(\rho z+\lambda w)$, which, by Lemma \ref{L-26}, means that $\rho x+\lambda y=\rho z+\lambda w$. This proves $(21)$. 
 \end{proof}

\begin{theorem}\label{T-214} 
${\bf A}\equiv{\bf QQ}$.
\end{theorem}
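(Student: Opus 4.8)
The plan is to exhibit the two constructions of Definition~\ref{D-27} as mutually inverse bijections between ${\bf A}$ and ${\bf QQ}$ that leave the pair $(\lambda,\rho)$ fixed. Concretely, I would show that $(17)$ sends a quadruple $(A,+,\lambda,\rho)\in{\bf A}$ to $(A,\oplus,\lambda,\rho)\in{\bf QQ}$, that $(18)$ sends $(Q,\cdot,\lambda,\rho)\in{\bf QQ}$ to $(Q,\odot,\lambda,\rho)\in{\bf A}$, and that composing the two in either order returns the original quadruple. Since both operations keep $\lambda$ and $\rho$ unchanged, the whole equivalence reduces to four verifications: well-definedness in each direction, and the two round-trip identities.

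For the direction ${\bf A}\to{\bf QQ}$, Lemma~\ref{L-29} already gives that $(A,\oplus)$ is a quadratical quasigroup, so it remains to check that $\lambda,\rho$ are commuting automorphisms of $(A,\oplus)$ and that $(11)$--$(14)$ hold. The automorphism property follows from $\lambda(x\oplus y)=\lambda\rho x+\lambda^2y=\rho\lambda x+\lambda^2y=\lambda x\oplus\lambda y$ and the symmetric computation for $\rho$, both using $\lambda\rho=\rho\lambda$. Each of $(11)$--$(14)$ then unwinds to a short identity in $(A,+)$: expanding $\oplus$ by $(17)$ turns $(11)$ into the commuting of $\lambda$ and $\rho$, turns $(12)$ into $\rho\lambda x+\lambda\rho x=x$ which is $(16)$, turns $(13)$ into the commutativity of $+$, and makes $(14)$ the unique solvability of $x+w=y$. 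I expect all of these to be immediate.

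The substantial direction is ${\bf QQ}\to{\bf A}$, where I must prove that $(Q,\odot)$ is a $2$-divisible commutative group satisfying $(15)$--$(16)$; this is the main obstacle. Commutativity of $\odot$ is exactly $(13)$. For the identity I would take $s=\rho x\cdot\lambda x$, which is independent of $x$ and satisfies $\lambda=L_s$, $\rho=R_s$ by Proposition~\ref{P-23}; idempotency then gives $\lambda s=\rho s=s$, whence $x\odot s=\rho^{-1}x\cdot s=\rho(\rho^{-1}x)=x$ and likewise $s\odot x=x$, so $s$ is a two-sided identity and $(Q,\odot)$ is a loop. The crux is associativity, which I would obtain by first showing that $\odot$ is medial: expanding $(a\odot b)\odot(c\odot d)$ and $(a\odot c)\odot(b\odot d)$ through $(18)$, applying that $\lambda^{-1},\rho^{-1}$ are automorphisms of $(Q,\cdot)$, and then using mediality of $(Q,\cdot)$ together with $\lambda\rho=\rho\lambda$, reduces both expressions to the same $\cdot$-word $(\rho^{-2}a\cdot\lambda^{-1}\rho^{-1}c)\cdot(\rho^{-1}\lambda^{-1}b\cdot\lambda^{-2}d)$. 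Mediality of a loop forces associativity by the specialization $b=s$ in $(a\odot b)\odot(c\odot d)=(a\odot c)\odot(b\odot d)$, which collapses to $a\odot(c\odot d)=(a\odot c)\odot d$; hence $(Q,\odot)$ is an abelian group. Finally $(15)$ is $\rho x\odot\lambda x=\rho^{-1}\rho x\cdot\lambda^{-1}\lambda x=x\cdot x=x$ by idempotency, $(16)$ is $\lambda\rho x\odot\lambda\rho x=\lambda x\cdot\rho x=x$ by $(12)$, and $(16)$ with $\overline{x}=\lambda\rho x$ simultaneously supplies $2$-divisibility.

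It remains to check the two round trips, which I expect to be immediate from $(17)$ and $(18)$. Starting in ${\bf A}$ and applying $(17)$ then $(18)$ yields $\rho^{-1}x\oplus\lambda^{-1}y=\rho\rho^{-1}x+\lambda\lambda^{-1}y=x+y$, recovering $+$; starting in ${\bf QQ}$ and applying $(18)$ then $(17)$ yields $\rho x\odot\lambda y=\rho^{-1}\rho x\cdot\lambda^{-1}\lambda y=x\cdot y$, recovering $\cdot$. Since $\lambda$ and $\rho$ are untouched throughout, the two maps are mutually inverse bijections, and therefore ${\bf A}\equiv{\bf QQ}$.
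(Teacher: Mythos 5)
Your proposal is correct and shares the paper's global skeleton: the same two maps $\Psi$ and $\Phi$ built from $(17)$ and $(18)$, well-definedness in both directions, and the round-trip checks (which you carry out explicitly, whereas the paper dismisses them with ``clearly''). The ${\bf A}\to{\bf QQ}$ direction is essentially identical to the paper's. Where you genuinely diverge is the crux of ${\bf QQ}\to{\bf A}$, namely associativity of $\odot$. The paper gets it in one stroke by substituting $x=\rho^{-2}a$, $y=\lambda^{-1}\rho^{-1}b$, $z=\lambda^{-2}c$ into axiom $(11)$; after applying the automorphisms $\rho^{-1},\lambda^{-1}$ this literally reads $(a\odot b)\odot c=a\odot(b\odot c)$, and then commutativity from $(13)$ and divisibility from $(14)$ make $(Q,\odot)$ a commutative group without ever exhibiting its identity element. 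You instead construct the identity $s=\rho x\cdot\lambda x$ via Proposition \ref{P-23}, transfer mediality of the quadratical quasigroup $(Q,\cdot)$ (Theorem \ref{T-21}, identity $(7)$) to $\odot$, and use the specialization $b=s$ in the medial law to force associativity. Both arguments are sound: the paper's is shorter and stays entirely inside the axioms $(11)$--$(14)$ of ${\bf QQ}$, while yours is more structural --- it identifies the group identity explicitly and explains conceptually why a group emerges (a medial loop is associative) --- at the cost of importing Proposition \ref{P-23} and the mediality of quadratical quasigroups.

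One omission you should repair: membership in ${\bf A}$ (Definition \ref{D-25}) requires $\lambda$ and $\rho$ to be automorphisms of $(Q,\odot)$, not merely of $(Q,\cdot)$; you state the target of the ${\bf QQ}\to{\bf A}$ direction as ``a $2$-divisible commutative group satisfying $(15)$--$(16)$'' and never check this, although you do check the analogous fact for $\oplus$ in the other direction. The verification is one line, as in the paper: $\rho(x\odot y)=\rho(\rho^{-1}x\cdot\lambda^{-1}y)=x\cdot\lambda^{-1}\rho y=\rho x\odot\rho y$, using that $\rho$ is an automorphism of $(Q,\cdot)$ commuting with $\lambda$, and similarly for $\lambda$. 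With that line added, your proof is complete.
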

\begin{proof} We prove that the mappings $\Psi:{\bf A}\rightarrow{\bf QQ}$; $(A,+,\lambda,\rho)\mapsto (A,\oplus,\lambda,\rho)$ and $\Phi:{\bf QQ}\rightarrow{\bf A}$; $(Q,\cdot,\lambda,\rho)\rightarrow (Q,\odot,\lambda,\rho)$ are mutually inverse mappings.

Let $(A,+,\lambda,\rho)\in{\bf A}$. Then, by Lemma \ref{L-29}, $(A,\oplus)$ is a quadratical quasigroup. Moreover, $(x\oplus y)\oplus\lambda z=(\rho^2x+\rho\lambda y)+\lambda^2z=\rho^2x+\lambda(\rho y+\lambda z)=\rho x\oplus(y\oplus z)$. So, $(11)$ holds (for $\oplus$). Also, $\lambda x\oplus\rho x=\rho\lambda x+\lambda\rho x=\lambda\rho x+\lambda\rho x=x$, which proves $(12)$. Then, $\rho^{-1}x\oplus\lambda^{-1}y=x+y=y+x=\rho^{-1}y\oplus\lambda^{-1}x$ and so $(13)$ holds. Since $y=x+(y-x)$, $y=\rho^{-1}x\oplus\lambda^{-1}w$ for $w=y-x$. This proves $(14)$. Finally, $\lambda(x\oplus y)=\lambda(\rho x+\lambda y)=\rho\lambda x+\lambda^2 y=\lambda x\oplus\lambda y$ and so $\lambda$ is an automorphism of $(A,\oplus)$ and, similarly, so is $\rho$. Hence, $(A,\oplus,\lambda,\rho)\in {\bf QQ}$ and $\Psi$ is well-defined. 

Conversely, let $(Q,\cdot,\lambda,\rho)\in{\bf QQ}$. Then $\lambda\rho=\rho\lambda$ and $\rho^{-1}\lambda^{-1}=\lambda^{-1}\rho^{-1}$. Now $(11)$, for $\,x=\rho^{-2}a$, $\,y=\lambda^{-1}\rho^{-1}b\,$ and $\,z=\lambda^{-2}c$, gives $(\rho^{-2}a\cdot\rho^{-1}\lambda^{-1}b)\cdot\lambda^{-1}c=\rho^{-1}a\cdot (\lambda^{-1}\rho^{-1}b\cdot\lambda^{-2}c)$. Hence, 
$(a\odot b)\odot c=\rho^{-1}(\rho^{-1}a\cdot\lambda^{-1}b)\cdot\lambda^{-1}c=(\rho^{-2}a\cdot\rho^{-1}\lambda^{-1}b)\cdot\lambda^{-1}c=\rho^{-1}a\cdot (\lambda^{-1}\rho^{-1}b\cdot\lambda^{-2}c)=a\odot (b\odot c)$ and so $(Q,\odot)$ is a semigroup. By, $(13)$, it is commutative. From $(14)$ it follows that for all $x,y\in Q$ there exists $w\in Q$ such that $y=x\odot w$. Hence, $(Q,\odot)$ is a commutative group.  

Now since $\rho x\odot \lambda x=x\cdot x=x$, $(15)$ holds. Also, $\rho(x\odot y)=\rho(\rho^{-1}x\cdot\lambda^{-1}y)=x\cdot \lambda^{-1}\rho y=\rho x\odot\rho y$. So $\rho$ is an automorphism of $(Q,\odot)$. Similarly, $\lambda$ is an automorphism of $(Q,\odot)$. Finally, by $(12)$, $\lambda\rho x\odot\lambda\rho x=\rho\lambda x\odot\lambda\rho x=\rho^{-1}(\rho\lambda x)\cdot\lambda^{-1}(\lambda\rho x)=\lambda x\cdot\rho x=x$ and so $(16)$ holds too. Consequently, $(Q,\odot,\lambda,\rho)\in {\bf A}$ and $\Phi$ is well-defined. Clearly, $\Psi$ and $\Phi$ are mutually inverse mappings. 
\end{proof}


\section{$k$-translatable quasigroups} 			

All groupoids considered in this section are finite. For simplicity we assume that they have form $Q=\{1,2,\ldots,n\}$ with the {\it natural ordering} $1,2,\ldots,n$, which is always possible by renumeration of elements. Moreover, instead of $i\equiv j({\rm mod}\,n)$ we will write $[i]_n=[j]_n$. Additionally, in calculations of modulo $n$, we assume that $0=n$. 

Recall that a groupoid $(Q,\cdot)$ is {\em $k$-translatable} if and only if for all $i,j\in Q$ we have $i\cdot j=[i+1]_n\cdot [j+k]_n$, or equivalently, $i\cdot j=a_{[k-ki+j]_n}$, where $a_1,a_2,\ldots,a_n$ is the first row of the multiplication table of this groupoid. Then the sequence $a_1,a_2,\ldots,a_n$ is called a {\it $k$-translatable sequence}.

For the original definition of $k$-translatability see \cite{4} or \cite{5}.

Note that a groupoid may be $k$-translatable for one ordering but not for another, i.e., $a_1,a_2,\ldots,a_n$ may be a $k$-translatable sequence for the ordering $1,2,\ldots,n$ but in this groupoid for the ordering $2,1,3,\ldots,n$ may not be any $k$-translatable sequence. However the following result is true \cite[Lemma 2.7]{5}.

\begin{lemma}\label{L-31}
The sequence $a_1,a_2,\ldots,a_n$ is a $k$-translatable sequence of $(Q,\cdot)$ with respect to the ordering $1,2,\ldots,n$ if and only if $a_k,a_{k+1},\ldots,a_n,a_1,\ldots,a_{k-1}$ is a $k$-translatable sequence of $(Q,\cdot)$ with respect to the ordering $n,1,2,\ldots,n-1$.
\end{lemma}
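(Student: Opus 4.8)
We have a groupoid $(Q,\cdot)$ whose sequence $a_1,\dots,a_n$ is $k$-translatable for the ordering $1,2,\dots,n$, and we must show that the cyclically shifted sequence $a_k,a_{k+1},\dots,a_n,a_1,\dots,a_{k-1}$ is $k$-translatable for the reordering $n,1,2,\dots,n-1$. The plan is to work directly from the closed-form characterization $i\cdot j=a_{[k-ki+j]_n}$ and to track carefully how the indexing changes when we pass to the new ordering.

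**Setting up the two orderings.** First I would fix notation for the relabelling. The new ordering lists the elements as $n,1,2,\dots,n-1$, so if I call the new positions $1',2',\dots,n'$, then position $p'$ holds the element $[p-1]_n$ (with position $1'$ holding $n$, position $2'$ holding $1$, etc.). Equivalently, the element $m\in Q$ sits at new position $[m+1]_n$. The key bookkeeping fact is therefore the translation between an element and its position in each ordering: in the old ordering the element $m$ is at position $m$, and in the new ordering it is at position $[m+1]_n$. I would denote by $b_1,\dots,b_n$ the proposed new first row, i.e. $b_p=a_{[p+k-1]_n}$, so that $b_1=a_k,\,b_2=a_{k+1},\dots$, matching the claimed shifted sequence.

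**The core computation.** Next I would verify the defining identity of $k$-translatability \emph{in the new ordering}. Recall that a sequence $b_1,\dots,b_n$ is a $k$-translatable sequence with respect to an ordering exactly when, for elements occupying positions $I$ and $J$, the product equals $b_{[k-kI+J]_n}$, where $I,J$ are the \emph{positions} of the two factors in that ordering. So I would take arbitrary elements $x,y\in Q$, let $I=[x+1]_n$ and $J=[y+1]_n$ be their new positions, and must show $x\cdot y=b_{[k-kI+J]_n}$. On the one hand, by hypothesis $x\cdot y=a_{[k-kx+y]_n}$. On the other hand, substituting $b_p=a_{[p+k-1]_n}$ and $I=[x+1]_n$, $J=[y+1]_n$, the right side becomes $a_{[(k-kI+J)+k-1]_n}$. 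The whole task reduces to checking that the two subscripts agree modulo $n$, i.e. that
\[
k-kx+y \;\equiv\; (k-k[x+1]+[y+1])+k-1 \pmod n.
\]
This is a routine linear congruence: expanding the right side gives $k-kx-k+y+1+k-1=k-kx+y$, so the two indices coincide. That establishes the identity, hence $b_1,\dots,b_n$ is indeed a $k$-translatable sequence for the new ordering.

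**The converse and the main obstacle.** The converse direction is symmetric: the reordering $n,1,2,\dots,n-1$ and its associated shift are invertible, so the same substitution run backwards recovers the original sequence and ordering; I would simply remark that the argument is reversible rather than repeat it. I expect the main obstacle to be purely notational rather than conceptual: the genuine care is required in pinning down the convention that distinguishes an \emph{element} of $Q$ from its \emph{position} in a given ordering, since the $k$-translatability identity is phrased in terms of positions while the product $x\cdot y$ is phrased in terms of elements. Once the position-to-element dictionary $m\mapsto[m+1]_n$ for the new ordering is fixed and used consistently, the verification collapses to the one-line congruence above, and the $(k,n)$ hypotheses play no role here — only the arithmetic of the shift matters.
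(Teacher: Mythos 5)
Your proof is correct, but there is nothing in the paper to compare it against: the paper states this lemma without proof, quoting it as Lemma 2.7 of \cite{5}, so your argument stands as a self-contained verification rather than a variant of an in-paper one. Your route is the natural one: fix the element-to-position dictionary $m\mapsto[m+1]_n$ for the ordering $n,1,2,\ldots,n-1$, take $b_p=a_{[p+k-1]_n}$ as the candidate first row, and reduce the whole statement to the congruence $k-k(x+1)+(y+1)+k-1\equiv k-kx+y \pmod n$. Two points deserve to be made explicit, and your write-up essentially covers both. First, the defining identity $x\cdot y=b_{[k-kI+J]_n}$, once verified for all pairs, automatically certifies that $b_1,\ldots,b_n$ is the first row of the table in the new ordering (take $I=1$, i.e.\ $x=n$, which gives $n\cdot[p-1]_n=b_p$), so no separate check of the ``first row'' clause is needed. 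Second, your reversibility remark for the converse is legitimate precisely because the index computation is an identity of subscripts, not a one-way estimate: it converts the translatability condition in either ordering into the same family of equations $x\cdot y=a_{[k-kx+y]_n}$, so each direction follows from the other by the same substitution. You are also right that coprimality of $k$ and $n$ plays no role here; only the arithmetic of the cyclic shift does.
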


If $(Q,\cdot)$ is a groupoid, then $(Q,*)$, where $x*y=y\cdot x$ is called the {\it dual groupoid}. It is clear that $(Q,*)$ is idempotent (medial) if and only if $(Q,\cdot)$ is idempotent (medial). Moreover, as a consequence of Theorem 4.1 in \cite{5} we obtain the following result.

\begin{proposition}\label{dual}
The dual groupoid of a left cancellative $k$-translatable groupoid of order $n$ is $k^*$-translatable if and only if $[kk^*]_n=1$.
\end{proposition}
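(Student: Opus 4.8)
The plan is to bypass the cited Theorem 4.1 of \cite{5} and argue directly from the explicit index formula $x\cdot y=a_{[k-kx+y]_n}$ furnished by $k$-translatability. First I would unwind what it means for the dual $(Q,*)$ to be $k^*$-translatable. By definition this requires $i*j=[i+1]_n*[j+k^*]_n$ for all $i,j\in Q$, and since $x*y=y\cdot x$ this is exactly the condition $j\cdot i=[j+k^*]_n\cdot[i+1]_n$ for all $i,j\in Q$. Everything is now phrased in terms of the original operation $\cdot$, so both sides can be fed into the $k$-translatable formula.

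Next I would compute each side. The left-hand side is $j\cdot i=a_{[k-kj+i]_n}$. For the right-hand side, taking $x=[j+k^*]_n$ and $y=[i+1]_n$ and using $k[j+k^*]_n\equiv kj+kk^*\pmod n$, I obtain $[j+k^*]_n\cdot[i+1]_n=a_{[k-kj-kk^*+i+1]_n}$. Thus the $k^*$-translatability of the dual is equivalent to the single family of equalities $a_{[k-kj+i]_n}=a_{[k-kj-kk^*+i+1]_n}$ holding for all $i,j$.

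The decisive structural input is left cancellativity. Applying it with the fixed left factor $1$, and noting $1\cdot y=a_{[y]_n}=a_y$, shows that the first row $a_1,a_2,\ldots,a_n$ consists of distinct entries, hence is a permutation of $Q$; in particular $a_p=a_q$ forces $[p]_n=[q]_n$. Writing $p=[k-kj+i]_n$, the two index expressions differ by the constant $c=[1-kk^*]_n$, independent of $i$ and $j$, while $p$ ranges over all of $Q$ as $i$ varies with $j$ held fixed. The required equalities therefore amount to $a_p=a_{[p+c]_n}$ for every $p\in Q$. Injectivity of $a$ then forces $c\equiv 0\pmod n$, i.e. $[kk^*]_n=1$; conversely, if $[kk^*]_n=1$ the two indices coincide outright and the equalities hold trivially, giving the stated equivalence.

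I expect the only delicate point to be the ``only if'' direction: one must observe that the offset $c$ is genuinely constant and that the index $p$ sweeps out all of $Q$, so that a single nonzero periodicity $a_p=a_{[p+c]_n}$ would contradict the bijectivity of the first row. Once this is isolated, the remainder is a short modular computation. An alternative is to deduce the claim formally from Theorem 4.1 of \cite{5} on how translatability behaves under passage to the dual, but the direct route above is self-contained and keeps the role of left cancellativity transparent.
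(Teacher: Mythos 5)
Your argument is correct, and it is genuinely different from what the paper does: the paper offers no internal proof at all, stating the proposition as an immediate consequence of Theorem 4.1 of \cite{5} (an external, submitted paper on translatability), whereas you give a self-contained derivation straight from the index formula $i\cdot j=a_{[k-ki+j]_n}$. Your reduction is clean: dual $k^*$-translatability unwinds to $a_{[k-kj+i]_n}=a_{[k-kj+i+c]_n}$ for all $i,j$ with the constant offset $c=[1-kk^*]_n$, and left cancellativity applied with left factor $1$ (using $1\cdot y=a_y$) makes $p\mapsto a_p$ injective, which kills any nonzero offset; the converse is trivial since $c=0$ makes the two indices identical. Two small remarks: first, for the ``only if'' direction you do not actually need the observation that $p$ sweeps all of $Q$ --- injectivity applied to a \emph{single} instance $a_p=a_{[p+c]_n}$ already forces $[p]_n=[p+c]_n$, hence $c\equiv 0\pmod n$; the sweeping argument is harmless but superfluous. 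Second, you implicitly use the equivalence of the two formulations of $k$-translatability (the recursive one $i\cdot j=[i+1]_n\cdot[j+k]_n$ and the closed form via the first row), which the paper asserts in its definition and which indeed holds for arbitrary groupoids by iterating the recursion, so no hidden hypothesis is needed there. The trade-off is the usual one: the paper's citation is shorter and places the result in a general framework for duals of translatable groupoids, while your proof is elementary, verifiable within this paper alone, and makes transparent that left cancellativity enters only through the distinctness of the first row.
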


Below we present several characterizations of finite $k$-translatable quadratical quasigroups.

\begin{theorem}\label{T-32} Let $(Q,\cdot)$ be a $k$-translatable quasigroup of order $n$. Then the following statements are equivalent:

\medskip
$\begin{array}{rlll}
(a)&  (Q,\cdot)\;is\; quadratical,\\[2pt]
(b)&  [i+k(z\cdot i)]_n=[(j\cdot z)+k(i\cdot j)]_n \ \ for\; all\;\; i,j,z\in Q,\\[2pt]
(c)&  [(z\cdot i)+k(j\cdot z)]_n=[ki+(i\cdot j)]_n \ \ and \ \ [k^2]_n=[-1]_n,\\[2pt]
(d)&  Q\;is\; idempotent,\; medial\; and \; [k^2]_n=[-1]_n .
\end{array}$
\end{theorem}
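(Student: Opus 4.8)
The plan is to use the $k$-translatability formula $i\cdot j=a_{[k-ki+j]_n}$ as a dictionary that turns each groupoid identity into a congruence among products. The crucial feature is that the first row $a_1,\dots,a_n$ equals $1\cdot 1,\dots,1\cdot n$ and so, by left cancellativity of the quasigroup, $m\mapsto a_m$ is injective (indeed a permutation of $Q$); hence two products are equal if and only if their indices agree modulo $n$, which makes every translation reversible. Since $(Q,\cdot)$ is assumed to be a quasigroup it is right solvable, so by definition it is quadratical precisely when it satisfies property $A$, namely $xy\cdot x=zx\cdot yz$. Reading both sides of property $A$ through the dictionary and cancelling the common leading $k$ gives $[x-k(xy)]_n=[(yz)-k(zx)]_n$, which after rearrangement is exactly condition $(b)$ (both assert $[i-(j\cdot z)]_n=[k(i\cdot j)-k(z\cdot i)]_n$). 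This settles $(a)\Leftrightarrow(b)$.

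For $(b)\Rightarrow(d)$ I would first extract idempotency by the substitution $j=z=i$ in $(b)$, which collapses to $i=i\cdot i$. Passing idempotency through the dictionary gives $a_{[k+i(1-k)]_n}=i$ for all $i$; since the left side must then exhaust $Q$, the map $i\mapsto[k+i(1-k)]_n$ is a bijection and so $(1-k,n)=1$. Next, putting $j=i$ in $(b)$ and using idempotency yields $p\cdot q=(1-k)p+k(q\cdot p)$; substituting this into itself with $p,q$ interchanged and cancelling the now-invertible factor $1-k$ produces $(1+k)(p\cdot q)=p+kq$. A counting argument (the right side ranges over all of $Q$) shows $1+k$ is invertible, whence $p\cdot q=(1+k)^{-1}(p+kq)$, an affine rule that is automatically medial. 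Finally, an affine groupoid $ap+bq$ is $k$-translatable only if $a+bk=0$, and with $a=(1+k)^{-1}$, $b=k(1+k)^{-1}$ this reads $1+k^2\equiv 0$, i.e. $[k^2]_n=[-1]_n$. Thus $(d)$ holds.

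For $(d)\Rightarrow(a)$ I would recover the affine description from mediality and idempotency alone: the dictionary turns mediality into $(z\cdot w)-(y\cdot w)=k[(x\cdot y)-(x\cdot z)]$, whose two sides are independent of $x$ and of $w$ respectively, so both equal a function of $(y,z)$. Specializing $w$ and using idempotency gives $(z\cdot y)+(y\cdot z)=y+z$, and then $(1-k)(z\cdot y)=y-kz$, so (since $(1-k,n)=1$ as above) $x\cdot y=(1-k)^{-1}(y-kx)$ with $a=-k(1-k)^{-1}$, $b=(1-k)^{-1}$ satisfying $a+b=1$ and $a+bk=0$. Substituting this affine form into the index version of property $A$ reduces it to the coefficient identities $b=ka$, $kb=-a$ and $k(a-b)=1$, each of which follows from $a+b=1$, $a+bk=0$ and $[k^2]_n=[-1]_n$. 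Hence property $A$ holds and $(Q,\cdot)$ is quadratical. To incorporate $(c)$, I would multiply $(b)$ through by $k$ and use $[k^2]_n=[-1]_n$: the identity $[i+k(z\cdot i)]_n=[(j\cdot z)+k(i\cdot j)]_n$ becomes $[ki+(i\cdot j)]_n=[(z\cdot i)+k(j\cdot z)]_n$, which is the identity of $(c)$, and the step is reversible; so $(b)$ together with $[k^2]_n=[-1]_n$ is equivalent to $(c)$. Combined with the previous steps this yields $(a)\Leftrightarrow(b)\Leftrightarrow(c)\Leftrightarrow(d)$.

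The main obstacle I anticipate is not any single identity but the arithmetic bookkeeping. One must establish the coprimality facts $(1-k,n)=1$ and $(1+k,n)=1$ before any division is legitimate, and then keep every sign and every occurrence of $k$ correct while moving between the product form and the index form. In particular the derivation of the affine description of $\cdot$ — the hub through which all four conditions are compared — depends on cancelling $1-k$ and on exploiting the two independences furnished by mediality, and it is precisely here that idempotency, mediality and $k$-translatability must be combined with the greatest care.
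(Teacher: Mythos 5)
Your proof is correct, but apart from $(a)\Leftrightarrow(b)$ --- where you and the paper argue identically, translating property $A$ through the dictionary $i\cdot j=a_{[k-ki+j]_n}$ and using injectivity of $m\mapsto a_m$ --- it follows a genuinely different route. The paper handles $(c)$ and $(d)$ at the level of groupoid identities, resting on cited results: alterability of a $k$-translatable left-cancellative groupoid is equivalent to $[k^2]_n=[-1]_n$ (Corollary 2.18 of \cite{5}); quadratical quasigroups are idempotent, medial and alterable (Theorem \ref{T-21}); and conversely idempotency and mediality give elasticity, idempotency, elasticity and alterability give the bookend identity, and a bookend, idempotent, medial quasigroup is quadratical (Theorem 2.30 of \cite{3}). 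You avoid all of this by extracting an affine normal form for the multiplication: from $(b)$ you derive $(1+k)(p\cdot q)=p+kq$, and from $(d)$ you derive $(1-k)(z\cdot y)=y-kz$, in each case first proving the needed invertibility of $1\pm k$ modulo $n$ (your counting arguments are sound: idempotency forces $i\mapsto[k+i(1-k)]_n$ to be injective, hence $(1-k,n)=1$, and comparing the ranges of the two sides of $(1+k)(p\cdot q)=p+kq$ as $p$ varies forces $(1+k,n)=1$); after that, mediality, $[k^2]_n=[-1]_n$ and property $A$ all become coefficient computations in $\mathbb{Z}_n$ --- I checked that the index form of $A$ reduces exactly to $[ka]_n=b$, $[kb]_n=[-a]_n$ and $[k(a-b)]_n=1$, each a consequence of $[a+b]_n=1$, $[a+bk]_n=0$ and $[k^2]_n=[-1]_n$. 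The trade-off is clear: your argument is self-contained, needing no external results, and in effect it re-derives inline, for this special case, the linear representation that the paper only establishes later (Lemma \ref{L-41}, Theorem \ref{T-42}, Corollary \ref{C-46}), giving the explicit product formula as a by-product; the paper's proof is far shorter given its machinery, and because it stays at the level of identities (alterability, elasticity, bookend) rather than coordinates, the same technique transfers to the neighbouring Corollaries \ref{C-43} and \ref{C-44}, where one starts from a merely cancellative or right-distributive groupoid, the quasigroup property is not yet available, and your counting-based derivation of the affine form could not even get started.
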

\begin{proof} $(a)\Leftrightarrow (b)$. In a $k$-translatable quasigroup $(Q,\cdot)$ we have $i\cdot j=a_{[k-ki+j]_n}$ and $a_i=a_j$ if and only if $i=j$. So, the condition $A$:
$(i\cdot j)\cdot i=(z\cdot i)\cdot (j\cdot z)$ defining a quadratical quasigroup is equivalent to $a_{[k-k(i\cdot j)+i]_n}=a_{[k-k(z\cdot i)+(j\cdot z)]_n}$, i.e., to $[i+k(z\cdot i)]_n=[(j\cdot z)+k(i\cdot j)]_n$.

$(a)\Leftrightarrow (c)$. A quadratical quasigroup is alterable (Theorem \ref{T-21}). By Corollary 2.18 in \cite{5},a $k$-translatable left cancellative groupoid (and so a quasigroup too) is alterable if and only if $[k^2]_n=[-1]_n$. The condition $A$ defining  a quadratical quasigroup is, as in the previous part of this proof, equivalent to $[i+k(z\cdot i)]_n=[(j\cdot z)+k(i\cdot j)]_n$. Multiplying both sides of this equation by $k$ we obtain $(c)$, and conversely, multiplying $(c)$ by $k$ we obtain $(b)$. This proves the equivalence $(a)$ and $(c)$.

$(a)\Leftrightarrow (d)$. By Theorem \ref{T-21}, a quadratical quasigroup is idempotent, medial and alterable. If it is $k$-translatable, then $[k^2]_n=[-1]_n$, by Corollary 2.18 in \cite{5}.

Conversely, by Corollary 2.18 in \cite{5}, a $k$-translatable quasigroup with the property $[k^2]_n=[-1]_n$ is alterable. Since it is idempotent and medial, it is elastic. Idempotency, elasticity and alterability imply bookend. By Theorem 2.30 in \cite{3}, a bookend, idempotent and medial quasigroup is quadratical.
\end{proof}

\begin{corollary}\label{C-43} 
A $k$-translatable, left cancellative, right distributive groupoid of order $n$ is a quadratical quasigroup if and only if $[k^2]_n=[-1]_n$.
\end{corollary}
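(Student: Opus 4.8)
The statement is an `if and only if', and the forward implication is immediate: if the groupoid is already a quadratical quasigroup then, being $k$-translatable, it satisfies $[k^2]_n=[-1]_n$ by the equivalence $(a)\Leftrightarrow(d)$ of Theorem~\ref{T-32}. So the whole content lies in the converse, where I am given a $k$-translatable, left cancellative, right distributive groupoid $(Q,\cdot)$ of order $n$ with $[k^2]_n=[-1]_n$ and must produce a quadratical quasigroup. The plan is to verify the three hypotheses of Theorem~\ref{T-32}$(d)$ --- that $(Q,\cdot)$ is an idempotent, medial quasigroup --- since $[k^2]_n=[-1]_n$ is already in hand; the corollary then follows from $(d)\Rightarrow(a)$.

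First I would record that left cancellativity, together with $i\cdot j=a_{[k-ki+j]_n}$, forces the first row $a_1,\dots,a_n$ to be a permutation of $Q$. Idempotency is then a one-line consequence of right distributivity: putting $x=y=z$ in $xy\cdot z=xz\cdot yz$ gives $xx\cdot x=xx\cdot xx$, and left cancellation of the common left factor $xx$ yields $x=xx$. For the quasigroup property I would note that $[k^2]_n=[-1]_n$ makes $-k$ an inverse of $k$ modulo $n$, so $(k,n)=1$; combined with the injectivity of the permutation $a$, this shows each column $i\mapsto i\cdot j$ is injective, hence (by finiteness) a bijection, so $(Q,\cdot)$ is right cancellative and therefore left solvable. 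Being also right solvable (from left cancellativity and finiteness), it is a quasigroup.

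The substantive step is mediality, and the natural route is to pin down the exact form of the multiplication. Idempotency reads $a_{[k+(1-k)i]_n}=i$ for all $i$, so the affine map $i\mapsto[k+(1-k)i]_n$ is a left inverse of the permutation $a$ and hence injective, which forces $(1-k,n)=1$; inverting then gives $a_m=[(1-k)^{-1}(m-k)]_n$ and, after substitution, $i\cdot j=[\,\alpha i+\beta j\,]_n$ with $\beta=(1-k)^{-1}$ and $\alpha=-(1-k)^{-1}k$, where $\alpha+\beta=1$ and $\alpha+\beta k=0$. From this linear form mediality is a routine expansion, $(xy)(zw)=\alpha^2x+\alpha\beta y+\alpha\beta z+\beta^2w=(xz)(yw)$. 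With idempotency, mediality, the quasigroup property and $[k^2]_n=[-1]_n$ all established, Theorem~\ref{T-32}$(d)\Rightarrow(a)$ delivers that $(Q,\cdot)$ is quadratical.

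I expect the main obstacle to be exactly this derivation of the linear form: right distributivity on its own does not force mediality (it is the $k$-translatable coordinatization, via the permutation $a$ and idempotency, that does the work), and one must be careful that $(1-k,n)=1$ is genuinely \emph{derived} rather than assumed, so that the inverse $(1-k)^{-1}$ exists and the affine solution for $a_m$ is legitimate.
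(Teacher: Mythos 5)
Your proposal is correct, but it reaches the conclusion by a genuinely different route than the paper. The paper's converse argument stays entirely at the level of groupoid identities and cited results: from right distributivity plus left cancellativity it extracts \emph{both} idempotency and elasticity; it then invokes Proposition 2.13 of \cite{5} to get left distributivity from translatability, Corollary 2.18 of \cite{5} to get alterability from $[k^2]_n=[-1]_n$, deduces the bookend identity $j=(i\cdot j)\cdot (j\cdot i)$ from idempotency, elasticity and alterability, and finally cites Theorem 2.30 of \cite{3} (alterable $+$ left distributive $+$ bookend $\Rightarrow$ quadratical \emph{quasigroup}) --- so the paper never has to establish the quasigroup property separately; it comes out of that last citation. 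You instead use right distributivity only to get idempotency, prove the quasigroup property by hand (from $[k^2]_n=[-1]_n\Rightarrow (k,n)=1$ together with injectivity of the first row), and then coordinatize: your derivation of $x\cdot y=[\alpha x+\beta y]_n$ with $[\alpha+\beta]_n=1$ and $[\alpha+\beta k]_n=0$ is essentially Lemma \ref{L-41}$(iii)$--$(iv)$/Theorem \ref{T-42}, which the paper only proves later in Section 4 and independently of this corollary, so there is no circularity in rederiving it here; mediality then falls out of the linear form, and Theorem \ref{T-32}$(d)\Rightarrow(a)$ finishes. What your route buys is self-containment and an explicit description of the multiplication; note, though, that the dependence on \cite{5} and \cite{3} is not eliminated but merely hidden inside Theorem \ref{T-32}, whose proof of $(d)\Rightarrow(a)$ uses exactly the alterability--bookend machinery the paper uses directly. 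One cosmetic slip: since $a_{f(i)}=i$, your affine map $f(i)=[k+(1-k)i]_n$ is a \emph{right} inverse of the permutation $a$ (equivalently, $a$ is a left inverse of $f$); the injectivity of $f$, and hence $(1-k,n)=1$, still follows exactly as you argue.
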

\begin{proof} A quadratical groupoid is an alterable quasigroup. If it is a $k$-translatable, then $[k^2]_n=[-1]_n$, by Corollary 2.18 in \cite{5}.

Conversely, a right distributive and left cancellative groupoid is idempotent and elastic. If it is $k$-translatable, then, by Proposition 2.13 from \cite{5}, it is left distributive too, and $[k^2]_n=[-1]_n$ shows, by Corollary 2.18 in \cite{5}, that it is alterable. An alterable, idempotent, elastic groupoid satisfies $j=j\cdot j=(i\cdot j)\cdot (j\cdot i)$, so it is bookend.  An alterable, left distributive, bookend groupoid is a quadratical quasigroup \cite[Theorem 2.30]{3}. 
\end{proof}

\begin{corollary} \label{C-44}
A $k$-translatable, right solvable and right distributive, alterable groupoid of order $n$ is a quadratical quasigroup and $[k^2]_n=[-1]_n$.
\end{corollary}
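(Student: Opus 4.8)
The plan is to obtain both conclusions directly by feeding the extra hypothesis (alterability) into the two results immediately preceding this corollary, namely Corollary \ref{C-43} and the cited Corollary 2.18 of \cite{5}. The key observation is that the hypotheses listed here are arranged precisely so that, once $[k^2]_n=[-1]_n$ is established, the ``if'' direction of Corollary \ref{C-43} applies verbatim.

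First I would convert right solvability into left cancellation: as recorded immediately after the basic definitions, every right solvable groupoid is left cancellative. Thus $(Q,\cdot)$ is a $k$-translatable, left cancellative groupoid. Next I would exploit the alterability assumption together with $k$-translatability. By Corollary 2.18 of \cite{5}, a $k$-translatable left cancellative groupoid is alterable if and only if $[k^2]_n=[-1]_n$; since $(Q,\cdot)$ is alterable by hypothesis, this yields the second claimed conclusion $[k^2]_n=[-1]_n$ at once. Finally, $(Q,\cdot)$ is now seen to be $k$-translatable, left cancellative, and right distributive, with $[k^2]_n=[-1]_n$ in hand, so the ``if'' direction of Corollary \ref{C-43} gives that it is a quadratical quasigroup, completing the proof.

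I expect no serious obstacle: the argument is essentially a bookkeeping combination of the two previous results, and the only point requiring care is verifying that the hypotheses line up exactly with those two statements—in particular confirming that right solvability supplies the left cancellation needed to invoke both Corollary 2.18 of \cite{5} and Corollary \ref{C-43}. Once that matching is checked, the two implications chain together with nothing further to prove.
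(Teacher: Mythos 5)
Your proof is correct, and it converges with the paper's own argument in its last two steps (deducing $[k^2]_n=[-1]_n$ from alterability via Corollary 2.18 of \cite{5}, then invoking Corollary \ref{C-43}), but it differs in how left cancellativity is obtained. The paper first cites Proposition 2.9 of \cite{5}, which says that a $k$-translatable, right solvable, right distributive groupoid is already an \emph{idempotent quasigroup}; left cancellativity then comes for free, and the quasigroup property is known before Corollary \ref{C-43} is applied. You instead use only the elementary observation recorded after the definitions in Section 2 --- right solvable groupoids are left cancellative --- which is all that Corollary 2.18 of \cite{5} and the ``if'' direction of Corollary \ref{C-43} actually require; the quasigroup property (and idempotency) then arrive as part of the conclusion of Corollary \ref{C-43} rather than as a preliminary fact. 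Your route is leaner, avoiding one external citation and using right distributivity only where Corollary \ref{C-43} needs it; the paper's route gives the structural fact ``idempotent quasigroup'' up front, which is slightly more informative mid-proof but redundant for the final statement. Both are complete proofs of the corollary.
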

\begin{proof} By Proposition 2.9 from \cite{5} such groupoid is an idempotent quasigroup. Since it is alterable, $[k^2]_n=[-1]_n$. By Corollary \ref{C-43} it is quadratical. 
\end{proof}

\begin{theorem} \label{T-45} Let $(Q,\cdot)$ be a $k$-translatable, quasigroup of order $n$. Then the following statements are equivalent:
\begin{enumerate}
\item[$(a)$] \ $(Q,\cdot)$ is quadratical,
\item[$(b)$] \ $(Q,\cdot)$ is right distributive and $[k^2]_n=[-1]_n$,
\item[$(c)$] \ $(Q,\cdot)$ is right distributive and alterable, 
\item[$(d)$] \ $(Q,\cdot)$ is left distributive and alterable.
\end{enumerate}
\end{theorem}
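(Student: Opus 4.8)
The plan is to prove the equivalence of the four statements $(a)$, $(b)$, $(c)$, $(d)$ by combining the characterizations already established in Theorem \ref{T-32} and Corollary \ref{C-43} with the standard symmetry between left and right distributivity that holds for $k$-translatable quasigroups. The key leverage is that for a $k$-translatable quasigroup the alterability condition is equivalent to $[k^2]_n=[-1]_n$ (Corollary 2.18 in \cite{5}), so any hypothesis phrased with ``alterable'' can be traded for the arithmetic condition $[k^2]_n=[-1]_n$, and vice versa. This immediately identifies $(b)$ and $(c)$: a $k$-translatable quasigroup is right distributive and satisfies $[k^2]_n=[-1]_n$ if and only if it is right distributive and alterable.

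I would organize the argument as a cycle of implications. First, $(a)\Rightarrow(b)$ follows because a quadratical quasigroup is right distributive by Theorem \ref{T-21}, and $[k^2]_n=[-1]_n$ holds for a $k$-translatable quadratical quasigroup by Theorem \ref{T-32}$(d)$ (or directly from Corollary 2.18 in \cite{5}, since quadratical quasigroups are alterable). The implication $(b)\Rightarrow(a)$ is precisely Corollary \ref{C-43}, noting that a quasigroup is in particular left cancellative, so the corollary applies verbatim. The equivalence $(b)\Leftrightarrow(c)$ is then the alterability/$[k^2]_n=[-1]_n$ trade described above, using that a $k$-translatable quasigroup is left cancellative. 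This already establishes that $(a)$, $(b)$, $(c)$ are mutually equivalent.

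It remains to fold in $(d)$. The natural route is $(a)\Rightarrow(d)$ and $(d)\Rightarrow(a)$ (or $(d)\Rightarrow(c)$). For $(a)\Rightarrow(d)$: a quadratical quasigroup is left distributive and alterable by Theorem \ref{T-21} together with Corollary 2.18 in \cite{5}. For the converse, I would invoke Proposition 2.13 from \cite{5}, which for a $k$-translatable (left cancellative) groupoid converts left distributivity into right distributivity; alterability is preserved, so $(d)$ yields a $k$-translatable, right distributive, alterable quasigroup, which is $(c)$, hence $(a)$. Closing the loop this way keeps each step short and reuses machinery already in the excerpt.

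The main obstacle I anticipate is making sure the left/right distributivity passage in the $(d)\Rightarrow(c)$ step is genuinely bidirectional and valid for a \emph{quasigroup} rather than merely a one-directional implication for some restricted class; I would want to confirm that Proposition 2.13 in \cite{5} supplies the equivalence of left and right distributivity for $k$-translatable left cancellative groupoids (or that idempotency, which follows here from right/left distributivity plus cancellativity, is available to make that passage symmetric). Once that symmetry is secured, every remaining implication is a direct citation of Theorem \ref{T-32}, Corollary \ref{C-43}, or Corollary 2.18 of \cite{5}, and no serious computation is required.
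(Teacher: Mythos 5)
Your proposal is correct and follows essentially the same route as the paper: $(a)$ implies the rest via the known properties of $k$-translatable quadratical quasigroups, $(b)\Rightarrow(a)$ is Corollary \ref{C-43}, and $(d)$ is folded in exactly as the paper does it, by deducing idempotency and then invoking \cite[Proposition 2.13]{5} to recover right distributivity. The only cosmetic difference is that you identify $(c)$ with $(b)$ directly through \cite[Corollary 2.18]{5} (alterability $\Leftrightarrow$ $[k^2]_n=[-1]_n$ for left cancellative $k$-translatable groupoids), whereas the paper cites Corollary \ref{C-44} for $(c)\Rightarrow(a)$; both steps rest on the same underlying facts.
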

\begin{proof} A $k$-translatable quadratical quasigroup of order $n$ satisfies $(b)$, $(c)$ and $(d)$. Hence $(a)$ implies $(b)$, $(c)$ and $(d)$. By Corollary \ref{C-43}, $(b)$ implies $(a)$. By Corollary \ref{C-44}, $(c)$ implies $(a)$. Thus $(a)$, $(b)$ and $(c)$ are equivalent.

A $k$-translatable quasigroup satisfying $(d)$ is idempotent. Hence, it is right distributive \cite[Proposition 2.13]{5}. Thus $(d)$ implies $(c)$, and in the consequence $(a)$. This completes the proof.  
\end{proof}

Finally, we will describe $k$-translatable quasigroups induced by the additive group $\mathbb{Z}_n$ of positive integers modulo $n$.

As is known (cf. \cite{2}), all quadratical quasigroups are uniquely determined by some commutative groups and their two commuting automorphisms. In particular, a quadratical quasigroup induced by the additive group $\mathbb{Z}_n$ of positive integers modulo $n$ has the form $x\cdot y=[ax+(1-a)y]_n$, where $a\in \mathbb{Z}_n$ and $[2a^2-2a+1]_n=0$ \cite[Theorem 4.8]{3}. For a fixed $a\in\mathbb{Z}_n$ such a quasigroup may be $k$-translatable for only one value of $k$ \cite[Theorem 9.3]{4}. This is possible only for $[k^2]_n=[-1]_n$ \cite[Corollary 2.18]{5}. Moreover, a quadratical quasigroup induced by $\mathbb{Z}_n$ is $k$-translatable if and only if its dual quasigroup $(n-k)$-translatable \cite[Theorem 9.4]{4}.

\begin{theorem}\label{T-36} A quadratical quasigroup of order $n$ is $k$-translatable if and only if it is induced by the additive group $\mathbb{Z}_n$. 
\end{theorem}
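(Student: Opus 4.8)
The plan is to treat the two implications separately, dispatching the direction ``induced by $\mathbb{Z}_n\Rightarrow k$-translatable'' by a direct computation and devoting the bulk of the effort to the converse, whose crux is to show that $k$-translatability forces the underlying abelian group to be cyclic.

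For the easy direction, suppose $(Q,\cdot)$ is induced by $\mathbb{Z}_n$. By \cite[Theorem 4.8]{3} (quoted above) its multiplication is $x\cdot y=[ax+(1-a)y]_n$ with $[2a^2-2a+1]_n=0$. I would first observe that $a$ and $1-a$ are units modulo $n$: any common divisor $d$ of $n$ and $a$ (respectively of $n$ and $1-a$) forces $2a^2-2a+1\equiv 1\pmod d$, whence $d=1$. Setting $k=[-a(1-a)^{-1}]_n$ one then has $(k,n)=1$ and $a+(1-a)k\equiv 0\pmod n$, so that $[i+1]\cdot[j+k]=[a(i+1)+(1-a)(j+k)]_n=[ai+(1-a)j]_n=i\cdot j$; that is, the quasigroup is $k$-translatable. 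One also checks $[k^2]_n=[-1]_n$, which is merely a restatement of $2a^2-2a+1\equiv 0$.

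For the converse I would first represent the quadratical quasigroup through its associated abelian group. By Proposition \ref{P-24}, $(Q,\cdot,L_s,R_s)\in{\bf QQ}$ for a fixed $s\in Q$, and applying the map $\Phi$ of Theorem \ref{T-214} yields a commutative group $(Q,\odot)$ with commuting automorphisms $\rho=R_s$, $\lambda=L_s$ satisfying $\rho x\odot\lambda x=x$ and $\lambda\rho x\odot\lambda\rho x=x$, and with $x\cdot y=\rho x\odot\lambda y$. In this group the right translations read $R_c(x)=\rho x\odot\lambda c$. Now let $T$ denote the cyclic shift $T(i)=[i+1]_n$. Rewriting $k$-translatability $i\cdot j=[i+1]\cdot[j+k]$ as $R_j=R_{[j+k]}\circ T$ gives $T=R_{[j+k]}^{-1}R_j$, and a short computation in $(Q,\odot)$ produces $T(x)=x\odot c$ with $c=\rho^{-1}\lambda\big(j\ominus[j+k]\big)$, a single fixed group element independent of $j$.

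The decisive point is now visible, and it is the step I expect to be the main obstacle: $T$ is translation by the fixed element $c$, yet $T$ is an $n$-cycle. Since translation by $c$ in an abelian group splits into orbits all of length $\mathrm{ord}(c)$, being a single $n$-cycle forces $\mathrm{ord}(c)=n$, so $c$ generates $(Q,\odot)$ and $(Q,\odot)\cong\mathbb{Z}_n$. Everything after this is bookkeeping: identifying $(Q,\odot)$ with $(\mathbb{Z}_n,+)$, the automorphisms $\rho,\lambda$ become multiplication by units $a$ and $b$; the identity $\rho x\odot\lambda x=x$ gives $b=1-a$, so $x\cdot y=[ax+(1-a)y]_n$, while $\lambda\rho x\odot\lambda\rho x=x$ gives $2a(1-a)\equiv 1$, i.e. $[2a^2-2a+1]_n=0$. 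Hence $(Q,\cdot)$ is induced by $\mathbb{Z}_n$, completing the proof.
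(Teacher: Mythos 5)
Your proof is correct. It shares the paper's skeleton---represent the quadratical quasigroup as $x\cdot y=\rho x\odot\lambda y$ over a commutative group, then show that this group is cyclic---but the decisive step is carried out by a genuinely different argument. The paper re-orders $Q$ (invoking Lemma \ref{L-31} repeatedly) so that the group identity sits at position $n$, deduces $\lambda k=-\rho 1$ from $n=n\cdot n=1\cdot k$, and proves by induction along the natural ordering that $\rho i$ is the $i$-th multiple of $\rho 1$; surjectivity of $\rho$ then makes $\rho 1$ a generator, and $\varphi(\rho i)=i$ an explicit isomorphism onto $\mathbb{Z}_n$. You avoid both the re-ordering and the induction: translatability says $R_j=R_{[j+k]_n}\circ T$, so the $n$-cycle $T(i)=[i+1]_n$ coincides with the group translation by a fixed element $c$, and since the orbits of a translation are the cosets of $\langle c\rangle$, a single $n$-cycle forces $\mathrm{ord}(c)=n$, i.e.\ cyclicity. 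Your route is ordering-free and more conceptual; the paper's, in exchange, produces the generator $\rho 1$ and the isomorphism $\varphi$ explicitly in terms of the given ordering. Two smaller differences: you build the abelian-group representation from the paper's own equivalence machinery (Proposition \ref{P-24} plus the map $\Phi$ of Theorem \ref{T-214}) where the paper cites the literature, and in the easy direction you verify translatability by direct computation (your $k=[-a(1-a)^{-1}]_n$ is the paper's $[1-2a]_n$, since $(1-a)(1-2a)\equiv -a \ ({\rm mod}\,n)$) instead of citing Lemma 9.1 of \cite{4}. Both versions are sound and of comparable length.
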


\begin{proof} {\sc Necessity}. Let $(Q,\cdot)$ be a $k$-translatable quadratical quasigroup of order $n$. Then $x\cdot y=\rho x+\lambda y$ for some commutative group $(Q,+)$ of order $n$ and two of its commuting automorphisms $\rho$ and $\lambda$. 

Let $e$ be the neutral element of $(Q,+)$. Then 
$$
e=\rho e+\lambda e=e\cdot e=[e+1]_n\cdot [e+k]_n=\rho[e+1]_n+\lambda[e+k]_n
$$ 
because $(Q,\cdot)$ is $k$-translatable. Using Lemma \ref{L-31} repeatedly, we can choose an ordering of $Q$ such that $e=n$. Hence, $e=n=n\cdot n=1\cdot k=\rho 1+\lambda k$, which  means that in the group $(Q,+)$ we have $\lambda k=-\rho 1$. 

Now, we prove by induction on $i$ that $\rho i=[i(\rho 1)]_n$ for every $i\in Q$.  

Clearly, $\rho 1=1(\rho 1)$. Assume $\rho j=j(\rho 1)$ for all $j\leq i-1$.  
Then, $[(i-1)(\rho 1)]_n=\rho(i-1)=\rho(i-1)+\lambda e=(i-1)\cdot n=i\cdot k=\rho i+\lambda k=\rho i-\rho 1$, which implies $\rho i=[i(\rho 1)]_n$ for all $i\in Q$. In particular, $e=n=\rho n=n(\rho 1)$.

Define $\varphi\colon (Q,+)\rightarrow (\mathbb{Z}_n,+)$ as $\varphi(\rho i)=i$. Then, $\varphi$ is one-to-one and onto. Also, $\varphi(\rho i+\rho j)=\varphi([i(\rho 1)]_n+[j(\rho 1)]_n)=\varphi([i+j]_n(\rho 1))=\varphi(\rho[i+j]_n)=[i+j]_n=[\varphi(\rho i)+\varphi(\rho j)]_n$. So, $(Q,+)$ and $\mathbb{Z}_n$ are isomorphic. Hence, $(Q,\cdot)$ is induced by the additive group $\mathbb{Z}_n$.

{\sc Sufficiency}. A quadratical quasigroup induced by the additive group $\mathbb{Z}_n$ has the form $x\cdot y=[ax+(1-a)y]_n$, where $a\in \mathbb{Z}_n$ and $[2a^2-2a+1]_n=0$ \cite[Theorem 4.8]{3}. Since $[a+(1-2a)(1-a)]_n=[1-2a+2a^2]_n=0$, this quasigroup is $[1-2a]_n$-translatable \cite[Lemma 9.1]{4}. 
 \end{proof}

As a simple consequence of the above theorem and Proposition 3.4 in \cite{3} we obtain the following two corollaries.

\begin{corollary}\label{C-32} A quadratical quasigroup of order $n$ is $k$-translatable if and only if it is induced by the group $\mathbb{Z}_{4t+1}$, for some positive integer $t$.
\end{corollary}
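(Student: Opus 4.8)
The plan is to reduce the statement entirely to Theorem \ref{T-36} and then settle a purely number-theoretic question about the admissible modulus. By Theorem \ref{T-36}, a quadratical quasigroup of order $n$ is $k$-translatable if and only if it is induced by the additive group $\mathbb{Z}_n$; and from the sufficiency part of that theorem such an induced quasigroup has the explicit form $x\cdot y=[ax+(1-a)y]_n$ with $a\in\mathbb{Z}_n$ and $[2a^2-2a+1]_n=0$. Hence the only thing left to prove is that the modulus of an induced quadratical quasigroup must be of the form $4t+1$, and conversely that passing to $\mathbb{Z}_{4t+1}$ changes nothing in the induction. The backward implication is immediate, since $\mathbb{Z}_{4t+1}$ is just a special case of $\mathbb{Z}_n$, so Theorem \ref{T-36} returns $k$-translatability once a valid $a$ exists.

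For the forward implication I would first rewrite the defining congruence. Multiplying $[2a^2-2a+1]_n=0$ by $2$ yields $[(2a-1)^2+1]_n=0$, i.e. $[(2a-1)^2]_n=[-1]_n$; setting $k=[1-2a]_n$ (the translatability value produced in Theorem \ref{T-36}) this is exactly the relation $[k^2]_n=[-1]_n$ of Corollary 2.18 in \cite{5}. So the existence of such a quasigroup is equivalent to $-1$ being a square modulo $n$.

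The heart of the argument is then the elementary fact that $-1$ can be a square modulo $n$ only when $n\equiv 1\,({\rm mod}\,4)$. I would argue in three short steps: (i) $2a^2-2a+1=2a(a-1)+1$ is odd (indeed $\equiv 1\,({\rm mod}\,4)$, since $a(a-1)$ is even), so the integer it equals is odd and hence $n$, dividing it, is odd; (ii) for every prime $p\mid n$ the congruence $(2a-1)^2\equiv-1\,({\rm mod}\,p)$ shows $-1$ is a quadratic residue modulo the odd prime $p$, which forces $p\equiv 1\,({\rm mod}\,4)$; (iii) a product of primes each $\equiv 1\,({\rm mod}\,4)$ is itself $\equiv1\,({\rm mod}\,4)$, so $n=4t+1$. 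Equivalently, one can simply quote Proposition 3.4 in \cite{3}, which supplies the order constraint $n\equiv1\,({\rm mod}\,4)$ directly; combined with Theorem \ref{T-36} this gives the corollary at once.

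I expect no serious obstacle, since each ingredient is either Theorem \ref{T-36} or classical quadratic-residue theory; the only point deserving care is interpretational rather than technical. The corollary must not be read as asserting that every $n\equiv 1\,({\rm mod}\,4)$ supports such a quasigroup: for instance $n=9=4\cdot2+1$ admits no solution of $(2a-1)^2\equiv-1$, since $-1$ is not a square modulo $9$. The correct reading is that the order is \emph{necessarily} of the form $4t+1$ (precisely those $n$ all of whose prime factors are $\equiv1\,({\rm mod}\,4)$), and that for such $\mathbb{Z}_{4t+1}$ carrying a valid $a$ the induced quasigroup is $k$-translatable; both sides of the biconditional are simply empty for the remaining $n$.
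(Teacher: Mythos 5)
Your proposal is correct, and its skeleton is the same as the paper's: everything is reduced to Theorem \ref{T-36}, and what remains is the order constraint $n\equiv 1\,({\rm mod}\,4)$. The difference lies in how that constraint is obtained. The paper's proof is a one-liner: it simply cites Proposition 3.4 of \cite{3}, which says every finite quadratical quasigroup has order $4t+1$. You note this citation route as an alternative, but your primary argument replaces it with a self-contained piece of number theory: from $[2a^2-2a+1]_n=0$ you deduce $[(2a-1)^2]_n=[-1]_n$, observe that $n$ is odd because it divides the odd integer $2a(a-1)+1$, conclude that every prime divisor $p$ of $n$ has $-1$ as a quadratic residue and hence $p\equiv 1\,({\rm mod}\,4)$, and finish because a product of such primes is again $\equiv 1\,({\rm mod}\,4)$. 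This buys independence from the external reference at the negligible cost of invoking the first supplement to quadratic reciprocity; the trade-off is that your derivation establishes the constraint only for quadratical quasigroups induced by $\mathbb{Z}_n$ (which is all the corollary needs after Theorem \ref{T-36}), whereas Proposition 3.4 of \cite{3} holds for arbitrary finite quadratical quasigroups. Your closing interpretational caveat --- that not every $n\equiv 1\,({\rm mod}\,4)$ carries such a quasigroup (e.g.\ $n=9$), so the biconditional must be read as constraining the order rather than guaranteeing existence --- is sound, and it is exactly what the paper's Corollary \ref{C-33} makes precise: the admissible orders are the products of primes $\equiv 1\,({\rm mod}\,4)$.
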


\begin{corollary}\label{C-33} The set of all orders of $k$-translatable quadratical quasigroups is $\{n: n=p_1^{\alpha_1}p_2^{\alpha_2}\cdots p_t^{\alpha_t}\}$, where the $p_i$ are different primes such that $p_i\equiv 1({\rm mod}\,4)$ for all $i=1,2,\ldots,t$.
\end{corollary}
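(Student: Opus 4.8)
The plan is to combine Theorem~\ref{T-36} with the structure formula for quadratical quasigroups induced by $\mathbb{Z}_n$ in order to convert the existence question ``for which $n$ does a $k$-translatable quadratical quasigroup exist?'' into a purely number-theoretic solvability question, and then to read off the answer using Proposition~3.4 of \cite{3}.

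First I would observe that, by Theorem~\ref{T-36}, a $k$-translatable quadratical quasigroup of order $n$ exists if and only if there is a quadratical quasigroup of order $n$ induced by the additive group $\mathbb{Z}_n$. By the structure result already quoted in the proof of Theorem~\ref{T-36} (namely \cite[Theorem 4.8]{3}), every quadratical quasigroup induced by $\mathbb{Z}_n$ has the form $x\cdot y=[ax+(1-a)y]_n$ for some $a\in\mathbb{Z}_n$ with $[2a^2-2a+1]_n=0$, and conversely any such $a$ yields one. Hence a $k$-translatable quadratical quasigroup of order $n$ exists if and only if the congruence $2a^2-2a+1\equiv 0\pmod n$ is solvable for $a\in\mathbb{Z}_n$.

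Next I would pin down exactly which $n$ make this congruence solvable, which is the content of Proposition~3.4 in \cite{3}. The key manipulation is that $2a^2-2a+1$ is always odd, so there is no solution when $n$ is even; and when $n$ is odd, $2$ is invertible modulo $n$, so multiplying through by $2$ yields the equivalent congruence $(2a-1)^2\equiv -1\pmod n$. Since $a\mapsto 2a-1$ is a bijection of $\mathbb{Z}_n$ for odd $n$, solvability is therefore equivalent to $-1$ being a quadratic residue modulo $n$.

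The heart of the argument, and the step I expect to carry the real content, is the classical quadratic-residue characterization that closes the loop with Proposition~3.4 of \cite{3}: for odd $n=p_1^{\alpha_1}\cdots p_t^{\alpha_t}$ the Chinese Remainder Theorem reduces ``$-1$ is a quadratic residue modulo $n$'' to ``$-1$ is a quadratic residue modulo each $p_i^{\alpha_i}$,'' and for an odd prime $p$ this holds for every exponent (by Hensel lifting) precisely when $-1$ is a quadratic residue modulo $p$, i.e. precisely when $p\equiv 1\pmod 4$. Assembling these equivalences shows that $2a^2-2a+1\equiv 0\pmod n$ is solvable exactly when $n=p_1^{\alpha_1}\cdots p_t^{\alpha_t}$ with all $p_i\equiv 1\pmod 4$, which by the first paragraph is exactly the set of orders of $k$-translatable quadratical quasigroups, as claimed.
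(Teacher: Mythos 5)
Your proof is correct and takes essentially the same route as the paper, whose entire argument is to combine Theorem~\ref{T-36} with Proposition~3.4 of \cite{3}: translatability forces (and is granted by) being induced by $\mathbb{Z}_n$, and the admissible $n$ are then read off from the solvability of $[2a^2-2a+1]_n=0$. The only difference is that you spell out the number-theoretic content that the paper delegates to the citation --- rewriting the congruence as $(2a-1)^2\equiv -1\pmod{n}$ and invoking the Chinese Remainder Theorem and Hensel lifting --- which is a faithful reconstruction rather than a new approach.
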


	
\section{Idempotent $k$-translatable groupoids} 

Let's recall that an idempotent $k$-translatable groupoid of order $n$ is left cancellative \cite[Lemma 2.10]{5}. Thus in a $k$-translatable sequence $a_1,a_2,\ldots,a_n$ of this groupoid all elements are different, i.e., $a_i=a_j$ if and only for $i=j$. Such a groupoid may not be right cancellative, but if $(k,n)=1$, then it is a quasigroup \cite[Theorem 2.14]{5}.
For every odd $n$ and every $k>1$ such that $(k,n)=1$ there is only one (up to isomorphism) idempotent $k$-translatable quasigroup \cite[Theorem 2.12]{5}.
For even $n$ there are no such quasigroups \cite[Theorem 8.9]{4}.

\begin{lemma}\label{L-41} In an idempotent $k$-translatable groupoid $(Q,\cdot)$ with a $k$-translatable sequence $a_1,a_2,\ldots,a_n$, for $c=2\cdot 1$ and $x,y\in Q$ 
\begin{enumerate}
\item[$(i)$] \ $[kc-c-2k+1]_n=0$,  
\item[$(ii)$] \ $[(1-k)x]_n=[(1-k)y]_n$ implies $x=y$,                                                                                                         
\item[$(iii)$] \ $a_{[i+1]_n}=[a_i+(2-c)]_n=[1+i(2-c)]_n$,
 \item[$(iv)$] \ $x\cdot y=[(c-1)x+(2-c)y]_n$,
\item[$(v)$] \ $(Q,\cdot)$ is a quasigroup if and only if $(k,n)=1$,
\item[$(vi)$] \ if $(Q,\cdot)$ is a quasigroup, then $n$ is odd.
\end{enumerate}
	\end{lemma}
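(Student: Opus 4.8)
The plan is to extract everything from two facts recalled in the text: idempotency, which in the translatable form $i\cdot j=a_{[k-ki+j]_n}$ reads $a_{[k+(1-k)i]_n}=i$ for all $i\in Q$ (call this relation $(R)$), and that an idempotent $k$-translatable groupoid is left cancellative, so the $a_i$ are pairwise distinct. I would begin with $(ii)$, which is the cleanest: if $[(1-k)x]_n=[(1-k)y]_n$, then $[k+(1-k)x]_n=[k+(1-k)y]_n$, and applying $(R)$ to both sides gives $x=a_{[k+(1-k)x]_n}=a_{[k+(1-k)y]_n}=y$. Thus multiplication by $1-k$ is injective on $Q$, i.e. $(1-k,n)=1$, a fact I would use repeatedly below.

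Next I would prove $(iii)$ by showing the first row is an arithmetic progression. Subtracting $(R)$ for $i+1$ and $i$ gives $a_{[m+(1-k)]_n}-a_{[m]_n}=1$ with $m=[k+(1-k)i]_n$; since $(1-k,n)=1$ this index runs over all of $Q$, so $(**)$: $a_{[m+(1-k)]_n}-a_{[m]_n}=1$ holds for every $m$. Writing $g(m)=[a_{[m+1]_n}-a_{[m]_n}]_n$ and comparing $(**)$ at $m$ with $(**)$ at $m+1$ yields $g([m+(1-k)]_n)=g(m)$ for all $m$; as $1-k$ generates $\mathbb{Z}_n$, $g$ is constant, say equal to $d$. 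Evaluating at $m=[1-k]_n$ identifies it: since $2\cdot 1=a_{[1-k]_n}=c$ and $2\cdot 2=a_{[2-k]_n}=2$ by idempotency, $d=a_{[2-k]_n}-a_{[1-k]_n}=2-c$. Together with $a_1=1\cdot 1=1$ this gives $a_{[i+1]_n}=[1+i(2-c)]_n$, which is $(iii)$. Part $(i)$ then falls out of $(**)$: stepping the index by $1-k$ adds $(1-k)d$ to $a$, so $(1-k)(2-c)\equiv 1\pmod n$, which rearranges to $[kc-c-2k+1]_n=0$.

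With the progression in hand, $(iv)$ is a direct substitution: $x\cdot y=a_{[k-kx+y]_n}=[1+(k-kx+y-1)(2-c)]_n$, and using $k(2-c)=1-c$ (from $(i)$) the $x$-coefficient collapses to $c-1$, the $y$-coefficient is $2-c$, and the constant term vanishes, giving $x\cdot y=[(c-1)x+(2-c)y]_n$. For $(v)$ I read off solvability from $(iv)$: the equation $a\cdot y=b$ requires inverting $2-c$, always a unit by $(i)$, while $x\cdot a=b$ requires inverting $c-1$; since $c-1=-k(2-c)$ with $2-c$ a unit, $(c-1,n)=(k,n)$, so $(Q,\cdot)$ is a quasigroup exactly when $(k,n)=1$. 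Finally $(vi)$ is a parity argument: in a quasigroup both $(k,n)=1$ and $(1-k,n)=1$ hold, so if $n$ were even both $k$ and $1-k$ would be odd, contradicting $k+(1-k)=1$; hence $n$ is odd.

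I expect the only real obstacle to be $(iii)$, specifically the passage from the single-step relation $(**)$ to the constancy of the consecutive differences $g(m)$; everything else is substitution and elementary coprimality bookkeeping. The delicate point is that $(R)$ by itself only controls differences across the step $1-k$, and it is the coprimality $(1-k,n)=1$ — itself squeezed out of idempotency and left cancellativity in $(ii)$ — that upgrades this to control of every consecutive difference and thereby forces the arithmetic structure.
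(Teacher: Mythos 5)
Your proposal is correct, but its logical architecture differs genuinely from the paper's. The paper proves $(i)$ first, by a direct left-cancellation computation: idempotency and $k$-translatability give $c\cdot c=c\cdot[1-2k+kc]_n$, hence $c=[1-2k+kc]_n$; it then derives $(ii)$ from $(i)$ via the identity $[t(kc-c-2k+1)]_n=[-t]_n$ valid when $t=[tk]_n$, and proves $(iii)$ by verifying $[(1-k)(a_i+(2-c))]_n=[(1-k)a_{[i+1]_n}]_n$ and cancelling the factor $1-k$ using $(ii)$ (this step also invokes left cancellativity, through $i=[k-ka_i+a_i]_n$). You invert the order of dependence: your $(ii)$ follows from idempotency alone, written as $a_{[k+(1-k)i]_n}=i$; your $(iii)$ comes from an orbit argument (consecutive differences of the first row are invariant under the shift $m\mapsto[m+1-k]_n$, which is transitive on $\mathbb{Z}_n$ because $(1-k,n)=1$, itself extracted from $(ii)$); and $(i)$ then falls out of the resulting arithmetic progression instead of being its source. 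Notably, your route never actually uses left cancellativity or distinctness of the $a_i$, even though you recall these facts at the outset. The computation for $(iv)$ is essentially identical in both. For $(v)$ the paper simply cites Theorem 2.14 and Lemma 2.15 of [5], while you give a self-contained argument by reading solvability off the linear form: $2-c$ is a unit by $(i)$, so left solvability is automatic, and $(c-1,n)=(k,n)$ governs right solvability. For $(vi)$ the paper splits into $k$ odd (parity of $[(k-1)(c-1)-k]_n$) and $k$ even (where $(k,n)=1$ forces $n$ odd); your observation that $(k,n)=(1-k,n)=1$ is impossible for even $n$, since one of $k$ and $1-k$ is even, is shorter. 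What the paper's route buys is brevity in $(iii)$ once $(i)$ and $(ii)$ are in hand; what yours buys is self-containedness (no appeal to [5]), a smaller set of working hypotheses, and a clearer structural picture: idempotency forces the first row to be an arithmetic progression with common difference $2-c$, and everything else is bookkeeping on that progression.
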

\begin{proof} 
$(i)$. Since $(Q,\cdot)$ is idempotent and $k$-translatable, $c\cdot c=c=2\cdot 1=1\cdot [1-k]_n=[1+(c-1)]_n\cdot [(1-k)+k(x-1)]_n=c\cdot [1-k+kc-k]_n$. This, by left cancellativity,  implies $c=[1-2k+kc]_n$. 

$(ii)$. $[(1-k)x]_n=[(1-k)y]_n$ gives $[x-y]_n=[k(x-y)]_n$, i.e., $t=[tk]_n$ for $t=[x-y]_n$. Then, by $(i)$, we have $0=[t(kc-c-2k+1)]_n=[-t]_n=[y-x]_n$. Hence $x=y$.

$(iii)$. Since $a_i=a_i\cdot a_i=a_{[k-ka_i+a_i]_n}$, by left cancellativity $i=[k-ka_i+a_i]_n$. Thus, $[i-k]_n=[(1-k)a_i]_n$.
Also $[(1-k)(2-c)]_n=[2-c-2k+kc]_n=1$, by $(i)$. Now, using these two facts, we obtain $[(1-i)(a_i+(2-c))]_n=[(1-k)a_i+1]_n=[i-k+1]_n=[(1-k)a_{i+1}]_n$. 
From this, by $(ii)$, we deduce that $a_{[i+1]_n}=[a_i+(2-c)]_n$. But $a_1=1\cdot 1=1$, so $a_2=[1+(2-c)]_n$, and, by induction, $a_{[i+1]_n}=[1+i(2-c)]_n$.

$(iv)$. We have $x\cdot y=a_q$, where $q=[k-kx+y]_n$. This, by $(iii)$, gives

$x\cdot y=a_q=[1+(k-kx+y-1)(2-c)]_n=[1+(2-c)(k-1)-k(2-c)x+(2-c)y]_n$

\hspace*{6mm}$\stackrel{(i)}{=}[(kc-2k)x+(2-c)y]_n\stackrel{(i)}{=}[(c-1)x+(2-c)y]_n.$

$(v)$. This follows from Theorem 2.14 \cite{5} and Lemma 2.15 \cite{5}.

$(vi)$. Observe that $(i)$ can be rewritten in the form $[(k-1)(c-1)-k]_n=0$. This, for odd $k$, means that $n$ is odd. 
From $(k,n)=1$ it follows that $n$ must be odd also for even $k$. So, in both cases $n$ is odd.
\end{proof}

\begin{theorem}\label{T-42}
A naturally ordered groupoid $(Q,\cdot)$ of order $n$ is idempotent and $k$-translatable if and only if $x\cdot y=[ax+by]_n$ for all $x,y\in Q$ and some $a,b\in \mathbb{Z}_n$ and

$(1)$ \ $[a+b]_n=1$, \ $[a+bk]_n=0$, or equivalently,

$(2)$ \ $1\cdot 1=1$, \ $1\cdot k=n$.
\end{theorem}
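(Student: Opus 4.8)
The plan is to lean almost entirely on Lemma~\ref{L-41}, which has already carried out the structural work; what remains is essentially bookkeeping. The statement is a biconditional whose right-hand side comes in two equivalent packagings, $(1)$ and $(2)$, so I would organize the argument into three short pieces: necessity of the product formula together with $(1)$, the equivalence of $(1)$ and $(2)$, and sufficiency. The guiding observation is that once a groupoid has the linear form $x\cdot y=[ax+by]_n$, both idempotency and $k$-translatability reduce to one-line identities in $a$ and $b$.

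For necessity, suppose $(Q,\cdot)$ is idempotent and $k$-translatable and set $c=2\cdot 1$. Lemma~\ref{L-41}$(iv)$ immediately gives $x\cdot y=[(c-1)x+(2-c)y]_n$, so I would put $a=[c-1]_n$ and $b=[2-c]_n$, which exhibits the required form $x\cdot y=[ax+by]_n$. Then $[a+b]_n=[(c-1)+(2-c)]_n=1$ is automatic. For the second condition I compute $a+bk=(c-1)+(2-c)k=-(kc-c-2k+1)$, and Lemma~\ref{L-41}$(i)$ gives $[kc-c-2k+1]_n=0$, whence $[a+bk]_n=0$. This establishes $(1)$.

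The equivalence of $(1)$ and $(2)$ and the sufficiency direction are then both direct computations from the formula $x\cdot y=[ax+by]_n$. Substituting the specific arguments gives $1\cdot 1=[a+b]_n$ and $1\cdot k=[a+bk]_n$ (using $n\equiv 0\pmod n$), so $(2)$ is literally $(1)$ rewritten and the equivalence follows. For sufficiency, assuming the formula with $(1)$, idempotency follows from $x\cdot x=[(a+b)x]_n=[x]_n=x$, and $k$-translatability from $[i+1]_n\cdot[j+k]_n=[ai+bj+(a+bk)]_n=[ai+bj]_n=i\cdot j$, the middle step using $[a+bk]_n=0$. The only place demanding the slightest care is the sign identity $a+bk=-(kc-c-2k+1)$ that licenses the appeal to Lemma~\ref{L-41}$(i)$; beyond that there is no real obstacle, since all the genuine content—left cancellativity, the recurrence for $a_{[i+1]_n}$, and the linear shape of the product—is already packaged in Lemma~\ref{L-41}, so this theorem is in effect a clean restatement of that lemma.
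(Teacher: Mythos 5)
Your proposal is correct and follows essentially the same route as the paper: necessity via Lemma~\ref{L-41}$(iv)$ with $a=[c-1]_n$, $b=[2-c]_n$, and sufficiency plus the equivalence of $(1)$ and $(2)$ by direct substitution into $x\cdot y=[ax+by]_n$. The only cosmetic difference is that you obtain $[a+bk]_n=0$ by routing through Lemma~\ref{L-41}$(i)$ and the sign identity $a+bk=-(kc-c-2k+1)$, whereas the paper reads it off directly from $x\cdot y=[x+1]_n\cdot[y+k]_n$; both are one-line computations and you also spell out the ``obvious'' converse, which is fine.
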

\begin{proof}
By Lemma \ref{L-41}$(iv)$, an idempotent $k$-translatable groupoid $(Q,\cdot)$ of order $n$ has the form $x\cdot y=[ax+by]_n$, where $a=[c-1]_n$, \ $b=[2-c]_n$ and $c=2\cdot 1$. Then obviously, $[a+b]_n=1$ and $x\cdot y=[x+1]_n\cdot [y+k]_n$, which gives $[a+bk]_n=0$. 

The converse statement is obvious. 
\end{proof} 

Note that in the above theorem $(b,n)=1$. Indeed, since $(Q,\cdot)$ is left cancellative, $x\cdot y=x\cdot w$ implies $y=w$. But this is possible only in the case when $(b,n)=1$.
So, $\psi(x)=[bx]_n$ is an automorphism of the additive group $\mathbb{Z}_n$, but $\varphi(x)=[ax]_n$ may not be one-to-one. For example, the groupoid with the ope\-ration $x\cdot y=[4x+5y]_8$ is idempotent and $4$-translatable, but it is not a quasigroup.
 
\begin{corollary}\label{C-div} If a naturally ordered idempotent groupoid of order $n$ has the form $x\cdot y=[ax+by]_n$, then it is $k$-translatable if and only if $[a+bk]_n=0$ and $(k,n)$ divides $(a,n)$.
\end{corollary}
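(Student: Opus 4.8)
The plan is to reduce $k$-translatability to its defining congruence and then read off the divisibility condition as a purely number-theoretic consequence. First I would record the one structural fact I need: since $x\cdot y=[ax+by]_n$ is idempotent, evaluating $x\cdot x=x$ (say at $x=1$) forces $[a+b]_n=1$, and conversely $[a+b]_n=1$ makes $[(a+b)x]_n=x$ hold for every $x$. This is exactly the hypothesis already built into Theorem \ref{T-42}.

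Next I would establish the core equivalence ``$(Q,\cdot)$ is $k$-translatable $\iff [a+bk]_n=0$'' by the same direct substitution used in Theorem \ref{T-42}. For $i,j\in Q$ one has $[i+1]_n\cdot[j+k]_n=[a(i+1)+b(j+k)]_n=[(ai+bj)+(a+bk)]_n$, because $a[i+1]_n\equiv a(i+1)$ and $b[j+k]_n\equiv b(j+k)\pmod n$. Comparing with $i\cdot j=[ai+bj]_n$, the identity $i\cdot j=[i+1]_n\cdot[j+k]_n$ holds for all $i,j$ precisely when $[a+bk]_n=0$. I want to stress that this step needs nothing beyond the linear form: $[a+bk]_n=0$ is by itself both necessary and sufficient for $k$-translatability.

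It then remains to show that, once $[a+bk]_n=0$, the divisibility $(k,n)\mid(a,n)$ holds automatically. Rewriting the congruence as $a\equiv -bk\pmod n$ and setting $d=(k,n)$, I note that $d\mid k$ gives $d\mid bk$ and $d\mid n$ gives $d\mid(a+bk)$, whence $d\mid a$; together with $d\mid n$ this yields $d\mid(a,n)$, i.e.\ $(k,n)\mid(a,n)$. Combining the three steps gives the claimed equivalence, the point being that the divisibility clause is a consequence of the congruence, so the conjunction ``$[a+bk]_n=0$ and $(k,n)\mid(a,n)$'' is equivalent to $[a+bk]_n=0$ alone.

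The only genuinely delicate point — and the one I would flag in the write-up — is precisely this redundancy: the forward direction must \emph{derive} $(k,n)\mid(a,n)$ from $[a+bk]_n=0$, whereas the reverse direction uses only $[a+bk]_n=0$ and never invokes the divisibility. As a useful by-product I would also subtract $[a+b]_n=1$ from $[a+bk]_n=0$ to obtain $b(k-1)\equiv -1\pmod n$, which re-proves $(b,n)=1$ and $(k-1,n)=1$ and explains the automorphism $\psi(x)=[bx]_n$ noted after Theorem \ref{T-42}.
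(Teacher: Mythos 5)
Your proposal is correct and takes essentially the same approach as the paper: the paper's entire proof is the one-line observation that $d=(k,n)$ together with $[a+bk]_n=0$ gives $d\mid a$, which is exactly your redundancy argument, while your direct substitution showing that $k$-translatability is equivalent to $[a+bk]_n=0$ alone is the part the paper delegates to Theorem \ref{T-42}. Your by-product $b(k-1)\equiv -1 \pmod n$, giving $(b,n)=(k-1,n)=1$, is also correct and consistent with the paper's remark following Theorem \ref{T-42}.
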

\begin{proof}
Indeed, $(k,n)=d$ together with $[a+bk]_n=0$ gives $d|a$.
\end{proof}

\begin{corollary}\label{C-kn} 
If a naturally ordered idempotent quasigroup $(Q,\cdot)$ of order $n$ has the form $x\cdot y=[ax+by]_n$, then it is $k$-translatable if and only if $[a+bk]_n=0$ and $(k,n)=1$.
\end{corollary}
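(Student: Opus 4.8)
The plan is to deduce this directly from Corollary~\ref{C-div}, which already characterizes $k$-translatability of an idempotent groupoid $x\cdot y=[ax+by]_n$ as the conjunction of $[a+bk]_n=0$ and $(k,n)\mid(a,n)$. A quasigroup is in particular such an idempotent groupoid, so Corollary~\ref{C-div} applies verbatim; all that remains is to show that, under the quasigroup hypothesis, the divisibility condition $(k,n)\mid(a,n)$ collapses to $(k,n)=1$.

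The key step is to use that a quasigroup is right cancellative and that this forces $(a,n)=1$. Indeed, $y\cdot x=z\cdot x$ gives $[ay+bx]_n=[az+bx]_n$, hence $[ay]_n=[az]_n$; right cancellativity then yields $y=z$, so the map $x\mapsto[ax]_n$ is injective on $\mathbb{Z}_n$, and such a multiplication map is injective precisely when $(a,n)=1$. This is the exact analogue of the remark following Theorem~\ref{T-42}, where left cancellativity was used to obtain $(b,n)=1$; here right cancellativity supplies the companion fact $(a,n)=1$. Note that this fact about the quasigroup is independent of $k$, so it is available as a standing hypothesis in both directions.

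With $(a,n)=1$ in hand, the equivalence $(k,n)\mid(a,n)\iff(k,n)=1$ is immediate, since $(k,n)\mid 1$ is the same as $(k,n)=1$. Feeding this back into Corollary~\ref{C-div} settles both directions at once: if the quasigroup is $k$-translatable, then $[a+bk]_n=0$ and $(k,n)\mid(a,n)$, and the latter reads $(k,n)=1$; conversely, assuming $[a+bk]_n=0$ and $(k,n)=1$, the condition $(k,n)\mid(a,n)$ holds trivially, so Corollary~\ref{C-div} returns $k$-translatability. There is essentially no substantive obstacle here; the statement is a genuine corollary, and the only points requiring care are the derivation of $(a,n)=1$ from right cancellativity and the trivial-but-essential observation that $(k,n)\mid 1$ means $(k,n)=1$.
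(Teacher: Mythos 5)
Your proof is correct. Note that the paper states this corollary without any proof, so there is no explicit official argument to match; the machinery it has set up for this purpose is Lemma \ref{L-41}$(v)$, which already asserts that an idempotent $k$-translatable groupoid is a quasigroup if and only if $(k,n)=1$. With that lemma the corollary is instant: necessity of $[a+bk]_n=0$ comes from Theorem \ref{T-42}, necessity of $(k,n)=1$ comes from Lemma \ref{L-41}$(v)$ applied to the quasigroup hypothesis, and sufficiency is the one-line computation that $[a+bk]_n=0$ alone forces $i\cdot j=[i+1]_n\cdot[j+k]_n$. Your route differs in the forward direction: you funnel everything through Corollary \ref{C-div} and collapse $(k,n)\mid(a,n)$ to $(k,n)=1$ by proving $(a,n)=1$ from right cancellativity --- the exact mirror of the paper's remark after Theorem \ref{T-42}, where left cancellativity yields $(b,n)=1$. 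What your version buys is self-containedness: Lemma \ref{L-41}$(v)$ is justified in the paper only by citation to external results of \cite{5}, whereas your $(a,n)=1$ step is an elementary cancellation argument. One small stylistic point: to prove injectivity of $y\mapsto [ay]_n$ the implications should run from $[ay]_n=[az]_n$ to $y\cdot x=z\cdot x$ (add $[bx]_n$ to both sides) and then to $y=z$ by right cancellativity; you wrote the chain in the opposite order, but since every step is reversible the argument stands.
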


\begin{corollary} An idempotent $k$-translatable groupoid is medial.
\end{corollary}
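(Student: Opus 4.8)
The plan is to reduce everything to the explicit multiplication formula already established and then verify the medial identity $xy\cdot zw=xz\cdot yw$ by a direct expansion. By Theorem \ref{T-42} (equivalently Lemma \ref{L-41}$(iv)$), any idempotent $k$-translatable groupoid of order $n$ has the form $x\cdot y=[ax+by]_n$ for suitable $a,b\in\mathbb{Z}_n$ with $[a+b]_n=1$. The key observation is that mediality is automatic for every ``linear'' operation of this shape over the commutative group $\mathbb{Z}_n$, so no further structural hypotheses are needed.

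Concretely, I would compute the two sides of the medial law using only additivity and the constants $a,b$. Writing $xy=[ax+by]_n$ and $zw=[az+bw]_n$, the left-hand side becomes
\[
xy\cdot zw=[a(ax+by)+b(az+bw)]_n=[a^2x+aby+abz+b^2w]_n,
\]
while $xz=[ax+bz]_n$ and $yw=[ay+bw]_n$ give
\[
xz\cdot yw=[a(ax+bz)+b(ay+bw)]_n=[a^2x+abz+aby+b^2w]_n.
\]
Since addition in $\mathbb{Z}_n$ is commutative, the two expressions coincide term by term, which is exactly $xy\cdot zw=xz\cdot yw$.

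There is essentially no obstacle here: once the representation $x\cdot y=[ax+by]_n$ is in hand, mediality is a one-line symmetry of the cross terms $aby$ and $abz$. The only thing to be careful about is that I am genuinely entitled to that representation, i.e. that the groupoid need not be assumed to be a quasigroup, since Theorem \ref{T-42} supplies the formula for all idempotent $k$-translatable groupoids and not merely those satisfying $(k,n)=1$. Thus the corollary follows immediately; in fact the same computation shows that any groupoid induced on a commutative group by $x\cdot y=\rho x+\lambda y$ with endomorphisms $\rho,\lambda$ is medial.
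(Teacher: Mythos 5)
Your proof is correct and is exactly the argument the paper intends: the corollary is stated right after Theorem \ref{T-42}, whose linear representation $x\cdot y=[ax+by]_n$ makes mediality a one-line verification of the symmetry of the cross terms, precisely as you carry out, and you are right that this needs no quasigroup hypothesis. One small caveat on your closing aside: over a general commutative group, $x\cdot y=\rho x+\lambda y$ with endomorphisms $\rho,\lambda$ is medial only when $\rho\lambda=\lambda\rho$, which holds automatically in $\mathbb{Z}_n$, so your main argument is unaffected.
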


\begin{corollary} \label{C-46}
A naturally ordered groupoid $(Q,\cdot)$ of order $n$ is a $k$-translatable quadratical quasigroup if and only if $x\cdot y=[ax+by]_n$ for all $x,y\in Q$ and some $a,b\in \mathbb{Z}_n$ such that

$(1)$ \ $[a+b]_n=1$, \ $[a+bk]_n=0$, \ $[2ab]_n=1$, or equivalently,

$(2)$ \ $1\cdot 1=1$, \ $1\cdot k=n$, \ $[a^2+b^2]_n=0$.      
\end{corollary}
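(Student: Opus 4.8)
The plan is to combine Theorem~\ref{T-42} with the characterization of $k$-translatable quadratical quasigroups already established in Theorem~\ref{T-32}. By Theorem~\ref{T-42}, the groupoid $(Q,\cdot)$ is idempotent and $k$-translatable if and only if $x\cdot y=[ax+by]_n$ with $[a+b]_n=1$ and $[a+bk]_n=0$. So the only extra content in Corollary~\ref{C-46} is the equivalence, \emph{under these standing assumptions}, of ``$(Q,\cdot)$ is a quadratical quasigroup'' with each of the two displayed arithmetic conditions $[2ab]_n=1$ and $[a^2+b^2]_n=0$. I would therefore treat the corollary as bolting the quadraticality condition onto Theorem~\ref{T-42}, rather than reproving the linear form from scratch.

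First I would invoke Theorem~\ref{T-32}(d): a $k$-translatable quasigroup is quadratical if and only if it is idempotent, medial and $[k^2]_n=[-1]_n$. Idempotency and mediality are already guaranteed (the latter by the corollary following Theorem~\ref{T-42}), so the quadratical condition reduces to $[k^2]_n=[-1]_n$ together with the quasigroup condition $(k,n)=1$ (Lemma~\ref{L-41}(v) / Corollary~\ref{C-kn}). The crux is then to show that, given $[a+b]_n=1$ and $[a+bk]_n=0$, the relation $[k^2]_n=[-1]_n$ is equivalent to $[2ab]_n=1$, and also to $[a^2+b^2]_n=0$. The second equivalence among the two arithmetic forms is immediate: since $[a+b]_n=1$ we have $[a^2+2ab+b^2]_n=1$, so $[a^2+b^2]_n=[1-2ab]_n$, whence $[2ab]_n=1\Longleftrightarrow[a^2+b^2]_n=0$. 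That disposes of one of the two target equivalences for free.

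The main computation is the equivalence of $[k^2]_n=[-1]_n$ with $[2ab]_n=1$. From $[a+bk]_n=0$ one gets $[a]_n=[-bk]_n$, and since $(b,n)=1$ (noted after Theorem~\ref{T-42}) the element $b$ is invertible modulo $n$, so I would solve for $k$ as $[k]_n=[-ab^{-1}]_n$ and substitute into $[k^2]_n$. Writing $[a+b]_n=1$ as $[a]_n=[1-b]_n$, I expect $[k^2+1]_n$ to simplify, after multiplying through by $b^2$, to a multiple of $[2ab-1]_n$ (using $[2a^2-2a+1]_n$-type identities that the constraints force), giving the desired equivalence. Concretely, $[k^2]_n=[a^2b^{-2}]_n$, so $[k^2]_n=[-1]_n\Longleftrightarrow[a^2+b^2]_n=0$, which is exactly the second displayed condition and, by the previous paragraph, equivalent to $[2ab]_n=1$.

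The step I expect to be the genuine obstacle is bookkeeping the direction of implication cleanly: the displayed conditions in the corollary do \emph{not} separately list $(k,n)=1$, so I must confirm that $(k,n)=1$ is automatically forced in the presence of $[a^2+b^2]_n=0$ (equivalently $[2ab]_n=1$) rather than needing to be assumed. Here $[2ab]_n=1$ forces $2$, $a$ and $b$ all invertible modulo $n$; combined with $[a+bk]_n=0$ and $b$ invertible, $k\equiv -ab^{-1}$ is then a unit, giving $(k,n)=1$, so the quasigroup hypothesis is not an extra assumption but a consequence. Once this is checked, both directions close: quadratical $\Rightarrow[k^2]_n=[-1]_n\Rightarrow[a^2+b^2]_n=0\Rightarrow[2ab]_n=1$, and conversely $[2ab]_n=1$ yields $(k,n)=1$ and $[k^2]_n=[-1]_n$, so by Theorem~\ref{T-32}(d) the idempotent medial $k$-translatable quasigroup is quadratical. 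I would present the proof as these two short chains of equivalences, explicitly citing Theorem~\ref{T-42}, Theorem~\ref{T-32} and Corollary~\ref{C-kn}.
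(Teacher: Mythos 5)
Your proof is correct, but it takes a genuinely different route from the paper's. The paper, like you, gets the linear form from Theorem~\ref{T-42}, but for quadraticality it does not use Theorem~\ref{T-32} at all: it cites Theorem 4.8 of \cite{3}, which says that a groupoid of the form $x\cdot y=[ax+(1-a)y]_n$ is quadratical if and only if $[2a^2-2a+1]_n=0$; writing $b=[1-a]_n$, this condition is exactly $[2ab]_n=1$, which settles necessity, and conversely $[2ab]_n=1$ gives $[2a^2-2a+1]_n=0$ and $(a,n)=(b,n)=1$, so the groupoid is quadratical by that same external theorem and is a quasigroup because $a$ and $b$ are units. You instead stay inside the present paper: you reduce quadraticality to Theorem~\ref{T-32}$(d)$ (idempotent, medial and $[k^2]_n=[-1]_n$), use the unnumbered corollary after Theorem~\ref{T-42} for mediality and Lemma~\ref{L-41}$(v)$/Corollary~\ref{C-kn} for the quasigroup property, and then do the modular arithmetic $k\equiv -ab^{-1}$, so that $[k^2]_n=[-1]_n\Leftrightarrow [a^2+b^2]_n=0\Leftrightarrow [2ab]_n=1$, the last step via $[(a+b)^2]_n=1$. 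The paper's route buys brevity: quadraticality is a one-line appeal to the known classification of linear quadratical forms over $\mathbb{Z}_n$. Your route buys self-containment and makes two things explicit that the paper glosses over: first, that $(k,n)=1$ need not be assumed, since $[2ab]_n=1$ makes $a,b$ units and then $k\equiv-ab^{-1}$ is a unit; second, the actual equivalence between the displayed condition sets $(1)$ and $(2)$, i.e.\ between $[2ab]_n=1$ and $[a^2+b^2]_n=0$, which the corollary asserts as ``equivalently'' without proof. Both directions of your argument close correctly, and the order of steps (quasigroup first, then Theorem~\ref{T-32}) is handled properly.
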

\begin{proof}
Since $(Q,\cdot)$ is idempotent and $k$-translatable, by our Theorem \ref{T-42} and Theorem 4.8 in \cite{3}, the multiplication in such a quasigroup is given by $x\cdot y=[ax+by]_n$, where $a,b\in \mathbb{Z}_n$ are such that $[a+b]_n=1$ and $[2a^2-2a+1]_n=0$. The last implies $[2ab]_n=1$. By translatability we also have $[a+bk]_n=0$. 

Conversely, if a groupoid of order $n$ satisfies the above conditions, then $b=[1-a]_n$, which, by $[2ab]_n=1$, implies $[2a^2-2a+1]_n=0$ and $(a,n)=(b,n)=1$. So, this groupoid is quadratical \cite[Theorem 4.8]{3}. Since $(a,n)=(b,n)=1$, it is a quasigroup.
\end{proof}

Note that by \cite[Proposition 3.4]{3}, the order of a finite quadratical quasigroup is $n=4t+1$ for some $t=0,1,2,\ldots$

\begin{corollary}
A quadratical quasigroup of the form $x\cdot y=[ax+by]_n$ is $k$-translatable only for $k=[1-2a]_n$. Its dual quasigroup is $[2a-1]_n$-translatable.
\end{corollary}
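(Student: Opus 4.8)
The plan is to extract the value of $k$ directly from the parametric conditions recorded in Corollary~\ref{C-46}. A quadratical quasigroup of the form $x\cdot y=[ax+by]_n$ satisfies $[a+b]_n=1$ and $[2ab]_n=1$, and (from the proof of Corollary~\ref{C-46}) $(a,n)=(b,n)=1$; writing $b=[1-a]_n$ and combining with $[2ab]_n=1$ yields the single quadratic relation $[2a^2-2a+1]_n=0$, which I will use throughout. By Corollary~\ref{C-kn} such a quasigroup is $k$-translatable precisely when $[a+bk]_n=0$.

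First I would prove uniqueness of the translatability value: since $(b,n)=1$, the element $b$ is invertible modulo $n$, so $[a+bk]_n=0$ forces $k\equiv-ab^{-1}\pmod n$, and there is therefore at most one admissible $k$. Next I would identify it as $[1-2a]_n$ by substituting the candidate into $a+bk$:
\[
a+(1-a)(1-2a)=1-2a+2a^2\equiv 0\pmod n,
\]
the last step being exactly the relation $2a^2-2a+1\equiv 0$. Hence $k=[1-2a]_n$ satisfies $[a+bk]_n=0$, and by the uniqueness just established it is the only such value, which proves the first assertion.

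For the dual quasigroup $(Q,*)$, $x*y=y\cdot x=[bx+ay]_n$, I would note it again has the form $x*y=[a'x+b'y]_n$ with $a'=[1-a]_n$ and $b'=a$, and that it is again quadratical because $[a'+b']_n=1$ and $[2a'b']_n=[2a(1-a)]_n=1$, again by $2a^2-2a+1\equiv 0$. Existence of a translatability value for the dual follows from Proposition~\ref{dual}: a quadratical $k$-translatable quasigroup has $(k,n)=1$ (as $[k^2]_n=[-1]_n$ forces $k$ invertible), so a unique $k^*$ with $[kk^*]_n=1$ exists and $(Q,*)$ is $k^*$-translatable. Applying the first part of the corollary to $(Q,*)$ then pins this value down as $[1-2a']_n=[1-2(1-a)]_n=[2a-1]_n$. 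Equivalently, from $[k^2]_n=[-1]_n$ one reads off $k^*=k^{-1}=[-k]_n=[2a-1]_n$ at once.

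No genuine obstacle arises: once Corollary~\ref{C-46} is available the argument is a short modular computation. The only point deserving care is the bookkeeping showing that the \emph{single} identity $2a^2-2a+1\equiv 0$ does all the work---verifying $a+bk\equiv 0$ for the original quasigroup, confirming $2a'b'\equiv 1$ so that the dual is again of quadratical type, and (together with $(b,n)=1$) guaranteeing that the translatability value is uniquely determined.
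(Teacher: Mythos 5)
Your proof is correct and takes essentially the same route as the paper's: both reduce the first claim to solving $[a+bk]_n=0$ under the relation $[2a^2-2a+1]_n=0$ (the paper solves it directly by multiplying by $2a=b^{-1}$, you verify the candidate $k=[1-2a]_n$ and invoke uniqueness from $(b,n)=1$), and both settle the dual via Proposition \ref{dual} through the identity $[(1-2a)(2a-1)]_n=1$, i.e.\ $k^*=[-k]_n$. Your detour of re-applying the first part to the dual quasigroup is sound but unnecessary, since your closing ``equivalently'' remark is exactly the paper's computation.
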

\begin{proof}
In such a quasigroup $[2a^2-2a+1]_n=0$, $[2ab]_n=1$ and $[a+bk]_n=0$.
Multiplying the last equation by $2a$ we obtain $[2a^2+k]_n=0$, whence $k=[-2a^2]_n=[1-2a]_n$.
Since, $[(1-2a)(2a-1)]_n=1$, by Proposition \ref{dual}, the dual quasigroup is $[2a-1]_n$-translatable.
\end{proof}

Using Theorem \ref{T-42} we can determine the structure of various $k$-translatable quasigroups strongly connected with the affine geometry and combinatorial designs.

We will start with {\it hexagonal quasigroups}, which are connected with affine geometry (see for example \cite{8a}), i.e., with idempotent medial quasigroups satisfying the identity $x\cdot yx=y$.
By Lemma \ref{L-41}, $k$-translatable hexagonal quasigroups have odd order $n$ and $(k,n)=1$.

\begin{proposition}\label{P-48} A naturally ordered groupoid $(Q,\cdot)$ of order $n$ is a $k$-translatable hexagonal quasigroup if and only if $[k^2-k+1]_n=0$ and $x\cdot y=[(1-k)x+ky]_n$.
\end{proposition}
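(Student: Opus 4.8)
The plan is to invoke Theorem \ref{T-42} to reduce everything to the linear form $x\cdot y=[ax+by]_n$, and then to translate the hexagonal identity $x\cdot yx=y$ into congruence conditions on $a$, $b$ and $k$. Note that mediality is automatic here, since every idempotent $k$-translatable groupoid is medial, so the only extra content in ``hexagonal'' beyond the hypotheses of Theorem \ref{T-42} is the identity $x\cdot yx=y$ together with the quasigroup property.

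For necessity, suppose $(Q,\cdot)$ is a $k$-translatable hexagonal quasigroup. Being idempotent and $k$-translatable, Theorem \ref{T-42} gives $x\cdot y=[ax+by]_n$ with $[a+b]_n=1$ and $[a+bk]_n=0$. First I would compute $x\cdot yx$ directly: from $yx=[ay+bx]_n$ we get $x\cdot yx=[(a+b^2)x+ab\,y]_n$, so the identity $x\cdot yx=y$ for all $x,y$ is equivalent to the pair of congruences $[a+b^2]_n=0$ and $[ab]_n=1$. The second of these shows that $b$ is a unit modulo $n$. Subtracting $[a+b^2]_n=0$ from the translatability relation $[a+bk]_n=0$ yields $[b(k-b)]_n=0$, and cancelling the unit $b$ gives $b=[k]_n$, hence $a=[1-k]_n$ via $[a+b]_n=1$. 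Substituting back into $[a+bk]_n=0$ produces $[1-k+k^2]_n=0$, that is $[k^2-k+1]_n=0$, and the multiplication has the asserted form.

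For sufficiency, assume $x\cdot y=[(1-k)x+ky]_n$ and $[k^2-k+1]_n=0$, and set $a=[1-k]_n$, $b=[k]_n$. I would first check $[a+b]_n=1$ and $[a+bk]_n=[1-k+k^2]_n=0$, so Theorem \ref{T-42} makes $(Q,\cdot)$ idempotent and $k$-translatable (and hence medial). Rewriting the hypothesis as $[k(k-1)]_n=-1$ exhibits $k$ as a unit, so $(k,n)=1$ and $(Q,\cdot)$ is a quasigroup by Corollary \ref{C-kn}. Finally I would verify the hexagonal identity by the same computation: $x\cdot yx=[(a+b^2)x+ab\,y]_n$, where $[a+b^2]_n=[1-k+k^2]_n=0$ and $[ab]_n=[(1-k)k]_n=[k-k^2]_n=1$, both following from $[k^2-k+1]_n=0$; thus $x\cdot yx=y$.

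The computation itself is routine; the only step needing care is the cancellation in the necessity direction, where one must justify that $b$ is invertible (supplied precisely by the congruence $[ab]_n=1$ extracted from the hexagonal identity) before concluding $b=[k]_n$ from $[b(k-b)]_n=0$.
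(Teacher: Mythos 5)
Your proof is correct, but your necessity argument runs along a genuinely different track from the paper's. The paper does not translate the full identity $x\cdot yx=y$ into congruences; instead it specializes to the single instance $1\cdot(2\cdot 1)=2$, uses the first-row recursion $a_{[i+1]_n}=[a_i+(2-c)]_n$ of Lemma \ref{L-41}$(iii)$ to turn this into $1\cdot(c-1)=c=2\cdot 1$, and then invokes cancellation to conclude that the translatability constant must be $k=[2-c]_n$, which pins down $a=[1-k]_n$ and $b=[k]_n$ in one stroke. Your route --- expanding $x\cdot yx=[(a+b^2)x+ab\,y]_n$, reading off $[a+b^2]_n=0$ and $[ab]_n=1$, and eliminating against $[a+bk]_n=0$ using the invertibility of $b$ --- is in fact the method the paper itself uses for the analogous propositions on $GS$-, $ARO$-, $C3$- and Stein quasigroups (Propositions \ref{P-413}, \ref{P-416}, \ref{P-417}, \ref{P-421}), so it is arguably more uniform, and it avoids any appeal to the uniqueness of the translatability constant. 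The sufficiency computations coincide (the paper compresses its version to ``it is easy to see''). One small repair: to conclude in the sufficiency direction that the groupoid is a quasigroup you cite Corollary \ref{C-kn}, but that corollary already assumes a quasigroup, so the citation is circular as stated; the correct reference is Lemma \ref{L-41}$(v)$, or simply observe, as the paper does, that $[k(k-1)+1]_n=0$ makes both $k$ and $k-1$ --- hence both coefficients $b=[k]_n$ and $a=[1-k]_n$ --- invertible modulo $n$, so that $x\cdot y=[(1-k)x+ky]_n$ is uniquely solvable in each variable.
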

\begin{proof} {\sc Necessity}. Let $(Q,\cdot)$ be a naturally ordered $k$-translatable hexagonal quasigroup of order $n$. Since, by definition, $(Q,\cdot)$ is idempotent, by Lemma \ref{L-41}, we have $x\cdot y=[(c-1)x+(2-c)y]_n$, where $c=2\cdot 1$. From hexagonality we obtain $1\cdot c=1\cdot (2\cdot 1)=2$, which, by $k$-translatability, implies $1\cdot(c-1)=c$. Therefore, $(Q,\cdot)$ is $n-(c-2)$-translatable and so $k=[2-c]_n$. Consequently, $c=[2-k]_n$ and $x\cdot y=[(1-k)x+ky]_n$. Moreover, $0=[a+bk]_n=[1-k+k^2]_n$. 

{\sc Sufficiency}. It is easy to see that a naturally ordered groupoid $(Q,\cdot)$ of odd order $n$ with $x\cdot y=[(1-k)x+ky]_n$, where $[k^2-k+1]_n=0$, is idempotent, medial, $k$-translatable and satisfies the identity $x\cdot yx=y$. Since $0=[k^2-k+1]_n=[k(k-1)+1]_n$, we also have $(k,n)=(k-1,n)=1$. Thus, $(Q,\cdot)$ is a quasigroup.
\end{proof}

\begin{corollary} A naturally ordered groupoid $(Q,\cdot)$ of order $n$ is a $k$-translatable hexagonal quasigroup if and only if $x\cdot y=[(c-1)x+(2-c)y]_n$, where $c=2\cdot 1$, $k=[2-c]_n$ and $[c^2-3c+3]_n=0$.
\end{corollary}

\begin{corollary} A naturally ordered hexagonal quasigroup $(Q,\cdot)$ of order $n$ may be $k$-translatable only for $k=[2-c]_n$, where $c=2\cdot 1$.
\end{corollary}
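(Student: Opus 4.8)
The plan is to obtain this as a direct consequence of the necessity half of Proposition \ref{P-48}, since the genuinely new content of the corollary is only that the value of $k$ is forced and therefore unique. The guiding observation is that $c = 2\cdot 1$ is an \emph{intrinsic} quantity of the quasigroup together with its natural ordering: it is just the product of two fixed elements and does not depend on $k$ in any way. Hence, once I show that $k$-translatability forces $k = [2-c]_n$, the uniqueness asserted by ``only for'' is automatic.

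Concretely, I would begin by assuming $(Q,\cdot)$ is a naturally ordered hexagonal quasigroup of order $n$ that is $k$-translatable. Since every hexagonal quasigroup is idempotent by definition, Lemma \ref{L-41} applies and, in particular, $(Q,\cdot)$ is left cancellative. I then evaluate the hexagonal identity $x\cdot yx = y$ at $x=1$, $y=2$, which gives $1\cdot(2\cdot 1) = 2$, that is, $1\cdot c = 2$.

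Next I invoke $k$-translatability in the form $i\cdot j = [i+1]_n\cdot[j+k]_n$ with $i=1$ and $j=c$, obtaining $1\cdot c = 2\cdot[c+k]_n$. Combining this with $1\cdot c = 2 = 2\cdot 2$ (idempotency) and cancelling the common left factor $2$ by left cancellativity yields $[c+k]_n = 2$, hence $k = [2-c]_n$. Because $c = 2\cdot 1$ depends only on $(Q,\cdot)$ and the natural ordering, and not on $k$, this is the unique value for which $k$-translatability can hold.

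I do not anticipate any real obstacle: the corollary is essentially a repackaging of the computation already carried out in the necessity direction of Proposition \ref{P-48}. The only point requiring care is the appeal to left cancellativity (valid for idempotent $k$-translatable groupoids by Lemma \ref{L-41}) in passing from $2\cdot[c+k]_n = 2\cdot 2$ to $[c+k]_n = 2$; this is legitimate precisely because the two left factors genuinely coincide, both being $2$.
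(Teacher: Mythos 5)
Your proposal is correct and takes essentially the same route as the paper: there the corollary is an immediate consequence of the necessity half of Proposition \ref{P-48}, which likewise evaluates the hexagonal identity at $x=1$, $y=2$ to obtain $1\cdot c=1\cdot(2\cdot 1)=2$ and then invokes $k$-translatability to force $k=[2-c]_n$. The only cosmetic difference is that you conclude by cancelling the common left factor $2$ (using left cancellativity and idempotency), whereas the paper runs the same computation through the linear form $x\cdot y=[(c-1)x+(2-c)y]_n$ of Lemma \ref{L-41}$(iv)$.
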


\begin{corollary} 
A hexagonal quasigroup of order $n$ is $k$-translatable if and only if its dual quasigroup is $[1-k]_n$-translatable.
\end{corollary}
\begin{proof}
This is a consequence of Proposition \ref{dual} and fact that $1=[k-k^2]_n=[k(1-k)]_n$.
\end{proof}
\begin{corollary} There is $($up to isomorphism$)$ only one $k$-translatable commutative hexagonal quasigroup. It is induced by the group $\mathbb{Z}_3$ and has the form $x\cdot y=[2x+2y]_3$.
\end{corollary}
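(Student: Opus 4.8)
The plan is to combine the explicit description of $k$-translatable hexagonal quasigroups from Proposition \ref{P-48} with the commutativity hypothesis, and let the resulting congruences force both the modulus $n$ and the translatability constant $k$.

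First I would invoke Proposition \ref{P-48}: after renumbering $Q$ into natural order, a $k$-translatable hexagonal quasigroup has the form $x\cdot y=[(1-k)x+ky]_n$ with $[k^2-k+1]_n=0$, and $n$ is odd. Imposing $x\cdot y=y\cdot x$ for all $x,y$ gives $[(1-k)x+ky]_n=[kx+(1-k)y]_n$, i.e. $[(1-2k)(x-y)]_n=0$ for all $x,y\in Q$. Since $Q=\{1,2,\ldots,n\}$, the difference $x-y$ takes the value $1$ (e.g. $x=2$, $y=1$), so $[1-2k]_n=0$, that is $[2k]_n=1$. Conversely $[2k]_n=1$ clearly yields commutativity, so commutativity is exactly the condition $[2k]_n=1$.

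Next I would eliminate $k$ between the two congruences $[2k]_n=1$ and $[k^2-k+1]_n=0$. Multiplying the second by $4$ gives $[(2k)^2-2(2k)+4]_n=0$, and substituting $[2k]_n=1$ collapses this to $[3]_n=0$, i.e. $n\mid 3$. As $n$ is the order of a genuine (non-degenerate) quasigroup we discard $n=1$ and conclude $n=3$. Then $[2k]_3=1$ forces $k=2$, whence $1-k\equiv 2\pmod 3$ and the multiplication is $x\cdot y=[2x+2y]_3$.

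Finally I would record that this groupoid is induced by $\mathbb{Z}_3$ (it has the form $[ax+by]_n$ with $a=b=2$ and $[a+b]_3=1$) and check directly that it is idempotent, medial, commutative, $2$-translatable and satisfies $x\cdot yx=y$, so that it is indeed a $k$-translatable commutative hexagonal quasigroup; uniqueness up to isomorphism is immediate since the naturally ordered form was forced throughout. I do not expect a genuine obstacle here: the only point needing care is the passage from $[(1-2k)(x-y)]_n=0$ for all $x,y$ to $[1-2k]_n=0$, which relies on the difference set of $\{1,\ldots,n\}$ containing the unit $1$, together with the routine exclusion of the trivial one-element case.
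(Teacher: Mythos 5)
Your proposal is correct, and it reaches the conclusion by a slightly different elimination than the paper uses. Both arguments start from Proposition \ref{P-48} and extract the same first consequence of commutativity, namely $[2k]_n=1$ (the paper writes this as $[1-k]_n=k$; your justification via $[(1-2k)(x-y)]_n=0$ with $x-y=1$ is the careful version of the same step). The divergence is in how $n$ is then pinned down: the paper invokes a second, translatability-based fact, $k=n-1$, which comes from $2\cdot 1=1\cdot 2$ combined with $k$-translatability and left cancellation (this is spelled out explicitly in the analogous corollary for commutative $GS$-quasigroups), and then $[2k]_n=1$ together with $k=n-1$ gives $[3]_n=0$. You never use $k=n-1$; instead you eliminate $k$ between $[2k]_n=1$ and the hexagonal congruence $[k^2-k+1]_n=0$ by multiplying the latter by $4$ and substituting, which yields $[3]_n=0$ directly. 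The paper's route is shorter and needs only the \emph{form} of the multiplication, not the quadratic condition; yours is purely congruence-arithmetic, self-contained in the data of Proposition \ref{P-48}, and has the added merit of explicitly discarding the degenerate case $n=1$ and verifying the converse (that $x\cdot y=[2x+2y]_3$ really is commutative, hexagonal and $2$-translatable), points the paper leaves implicit.
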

\begin{proof} Indeed, in this case $[1-k]_n=k$ and $k=n-1$. So, $[3]_n=0$. Thus, $n=3$ and consequently, $x\cdot y=[2x+2y]_3$.
\end{proof}

{\it $GS$-quasigroups $($golden section quasigroups$)$} defined in \cite{Vol} are used to describe various objects in affine geometry (see for example \cite{Kol} and \cite{Vol}). They are defined as idempotent quasigroups satisfying the (mutually equivalent) identities $x(xy\cdot z)\cdot z=y$ and $x\cdot (x\cdot yz)z=y$. It is not difficult to see that these quasigroups are medial \cite{Vol}. Hence, by Lemma \ref{L-41}, $k$-translatable $GS$-quasigroups have odd order $n$ and $(k,n)=1$.

\begin{proposition}\label{P-413} A naturally ordered groupoid $(Q,\cdot)$ of order $n$ is a $k$-transla\-ta\-ble $GS$-quasigroup if and only if $x\cdot y=[(k-1)x+(2-k)y]_n$ and $[k^2-3k+1]_n=0$.
\end{proposition}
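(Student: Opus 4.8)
The plan is to mirror the structure of Proposition~\ref{P-48}, treating necessity and sufficiency separately and reducing everything to the affine form supplied by Lemma~\ref{L-41}. For necessity, suppose $(Q,\cdot)$ is a $k$-translatable $GS$-quasigroup of order $n$. Since a $GS$-quasigroup is idempotent by definition, Lemma~\ref{L-41}$(iv)$ gives $x\cdot y=[ax+by]_n$ with $a=[c-1]_n$, $b=[2-c]_n$ and $c=2\cdot 1$, so that $[a+b]_n=1$; and Theorem~\ref{T-42} records the translatability relation $[a+bk]_n=0$. The whole proof then comes down to producing one more equation in $a,b$ from the $GS$ identity and solving the resulting linear/quadratic system for $a,b,k$.

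The key step is to feed the defining identity $x(xy\cdot z)\cdot z=y$ into the affine form and match coefficients in $x,y,z$ (legitimate because the $k$-translatable sequence has distinct entries, i.e.\ the reduced residues are forced to agree). Expanding $x(xy\cdot z)\cdot z=[(a^2+a^3b)x+a^2b^2y+(ab^2+b)z]_n$, the vanishing of the $z$-coefficient reads $[b(ab+1)]_n=0$; since $(b,n)=1$ by the note following Theorem~\ref{T-42}, this forces $[ab]_n=[-1]_n$ (the $x$- and $y$-coefficients then automatically give $[a^2(1+ab)]_n=0$ and $[(ab)^2]_n=1$, consistent with the identity collapsing to $y$). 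I would then combine the three relations $[a+b]_n=1$, $[a+bk]_n=0$, $[ab]_n=[-1]_n$: substituting $[a]_n=[-bk]_n$ into the first gives $[b(1-k)]_n=1$, and into the third gives $[b^2k]_n=1$; eliminating $b$ yields $[(1-k)^2]_n=[k]_n$, i.e.\ $[k^2-3k+1]_n=0$. Finally, since $[(2-k)(1-k)]_n=[(k^2-3k+1)+1]_n=1$, the relation $[b(1-k)]_n=1$ pins $[b]_n=[2-k]_n$, whence $[a]_n=[k-1]_n$, as claimed.

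For sufficiency I would verify directly that $x\cdot y=[(k-1)x+(2-k)y]_n$ with $[k^2-3k+1]_n=0$ has all the required properties: idempotency is immediate from $[(k-1)+(2-k)]_n=1$; $k$-translatability follows from $[a+bk]_n=[(k-1)+(2-k)k]_n=[-(k^2-3k+1)]_n=0$ via Theorem~\ref{T-42} (and mediality is automatic for an affine operation over $\mathbb{Z}_n$); it is a quasigroup because $[(k-1)(2-k)]_n=[-1]_n$ makes both $a=[k-1]_n$ and $b=[2-k]_n$ invertible modulo $n$; and the $GS$ identity holds since, with $[a+b]_n=1$ and $[ab]_n=[-1]_n$, the expansion above reduces the coefficients of $x$, $y$, $z$ in $x(xy\cdot z)\cdot z$ to $0$, $1$, $0$ respectively, giving $y$ (the second, equivalent identity $x\cdot(x\cdot yz)z=y$ checks the same way). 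The only nonroutine point, and the step I expect to be the main obstacle, is the bookkeeping in expanding the ternary $GS$ identity into its $a,b$-coefficients and correctly reading off $[ab]_n=[-1]_n$ from the $z$-coefficient using $(b,n)=1$; once that single equation is in hand, the remaining algebra connecting $a$, $b$ and $k$ is purely mechanical.
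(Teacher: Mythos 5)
Your proposal is correct and takes essentially the same route as the paper: reduce to the affine form $x\cdot y=[ax+by]_n$ via Theorem \ref{T-42}, extract $[ab+1]_n=0$ (equivalently $[a^2-a-1]_n=0$) from the $GS$ identity, combine it with $[a+b]_n=1$ and $[a+bk]_n=0$ to pin $a=[k-1]_n$, $b=[2-k]_n$ and $[k^2-3k+1]_n=0$, and then verify sufficiency directly, using $[(k-1)(2-k)]_n=[-1]_n$ to get the quasigroup property. The only difference is bookkeeping (you eliminate toward $b$ and $k$, the paper solves $k=[a^2]_n=[a+1]_n$ for $a$), which does not change the argument.
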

\begin{proof} By Theorem \ref{T-42}, the multiplication in a $k$-translatable $GS$-quasigroup of order $n$ is defined by $x\cdot y=[ax+by]_n$, where $[a+b]_n=1$, $[a+bk]_n=0$ and $(a,n)=1$. From $x(xy\cdot z)\cdot z=y$ we obtain $[a^2-a-1]_n=0$ and $[ab+1]_n=0$. Multiplying $[a+bk]_n=0$ by $a$ we get $[a^2-k]_n=0$. So, $k=[a^2]_n=[a+1]_n$. Thus, $a=k-1$. Consequently $x\cdot y=[(k-1)x+(2-k)y]_n$ and $[k^2-3k+1]_n=0$.

Conversely, a groupoid satisfying these conditions is idempotent and $k$-transla\-ta\-ble. It also satisfies the identity $x(xy\cdot z)\cdot z=y$. From $[(k-1)(2-k)+1]_n=[-(k^2-3k+1)]_n=0$ it follows that $(k-1,n)=(2-k,n)=1$. Thus, this groupoid is a $GS$-quasigroup.
\end{proof}

\begin{corollary}
 A naturally ordered groupoid $(Q,\cdot)$ of order $n$ is a $k$-transla\-ta\-ble $GS$-quasigroup if and only if its dual groupoid is a $[3-k]_n$-translatable $GS$-quasigroup.
\end{corollary}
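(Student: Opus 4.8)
The plan is to exploit Proposition~\ref{P-413}, which gives an explicit arithmetic description of $k$-translatable $GS$-quasigroups, together with Proposition~\ref{dual}, which governs when duals of $k$-translatable groupoids are $k^*$-translatable. The strategy is to reduce the \emph{if and only if} claim to two verifications: first, that the dual groupoid of a $k$-translatable $GS$-quasigroup is itself a $GS$-quasigroup and is $[3-k]_n$-translatable; and second, the converse. Since the $GS$ defining identities $x(xy\cdot z)\cdot z=y$ and $x\cdot(x\cdot yz)z=y$ are mutually equivalent, and since the dual operation $x*y=y\cdot x$ systematically swaps the roles of these two identities, I expect the ``$GS$-ness'' of the dual to be almost automatic once the translatability value is pinned down.

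First I would set up the forward direction. Assume $(Q,\cdot)$ is a $k$-translatable $GS$-quasigroup of order $n$. By Proposition~\ref{P-413}, $x\cdot y=[(k-1)x+(2-k)y]_n$ with $[k^2-3k+1]_n=0$, and $(k,n)=1$. To apply Proposition~\ref{dual}, I must exhibit a $k^*$ with $[kk^*]_n=1$; the natural candidate is $k^*=[3-k]_n$, so the key numerical step is to check $[k(3-k)]_n=[3k-k^2]_n=1$, which follows immediately from $[k^2-3k+1]_n=0$. This shows the dual is $[3-k]_n$-translatable. It remains to confirm the dual is a $GS$-quasigroup: writing $x*y=y\cdot x=[(k-1)y+(2-k)x]_n=[(2-k)x+(k-1)y]_n$, I would verify that setting $k'=[3-k]_n$ yields $2-k'=[k-1]_n$ and $k'-1=[2-k]_n$, so that $x*y=[(k'-1)x+(2-k')y]_n$, matching the canonical form of Proposition~\ref{P-413} exactly; and $[k'^2-3k'+1]_n=[(3-k)^2-3(3-k)+1]_n=[k^2-3k+1]_n=0$, so the required quadratic holds for $k'$. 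Hence $(Q,*)$ is a $[3-k]_n$-translatable $GS$-quasigroup.

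For the converse I would run the symmetric argument: the dual of the dual is $(Q,\cdot)$ itself, and $[3-k]_n$ satisfies $[(3-k)^2-3(3-k)+1]_n=0$ together with $([3-k],n)=1$ (which follows since $[(3-k)(k)]_n=1$ makes $3-k$ a unit modulo $n$). Applying the forward direction to the $[3-k]_n$-translatable $GS$-quasigroup $(Q,*)$ returns a $[3-(3-k)]_n=[k]_n$-translatable $GS$-quasigroup as its dual, namely $(Q,\cdot)$. Because the correspondence $k\leftrightarrow[3-k]_n$ is an involution modulo $n$ (as $3-(3-k)=k$), the two directions are genuinely mirror images and no separate computation is needed.

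The main obstacle is bookkeeping rather than conceptual: I must be careful that the algebraic form produced for the dual genuinely matches the \emph{normalized} form $x\cdot y=[(k'-1)x+(2-k')y]_n$ of Proposition~\ref{P-413} for the specific value $k'=[3-k]_n$, and not merely some form $[a'x+b'y]_n$ with $[a'+b'k']_n=0$; verifying that the coefficient of $x$ is exactly $[k'-1]_n$ and the coefficient of $y$ is exactly $[2-k']_n$ is where an arithmetic slip could hide. I would also note explicitly that $(k,n)=1$ is preserved, since $[k(3-k)]_n=1$ forces both $k$ and $3-k$ to be units modulo $n$, guaranteeing that the dual is again a quasigroup. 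Once these coefficient identifications and the single relation $[k(3-k)]_n=1$ are checked, both implications close.
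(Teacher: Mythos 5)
Your proof is correct and is essentially the argument the paper intends: the corollary follows from Proposition~\ref{P-413} together with Proposition~\ref{dual}, with the key relation $[k(3-k)]_n=[-(k^2-3k+1)+1]_n=1$ playing exactly the role that $[k(1-k)]_n=1$ plays in the paper's proof of the analogous corollary for hexagonal quasigroups. Your additional check that the dual operation matches the normalized form $[(k'-1)x+(2-k')y]_n$ with $k'=[3-k]_n$ satisfying $[k'^2-3k'+1]_n=[k^2-3k+1]_n=0$, and the involution $k\mapsto[3-k]_n$ handling the converse, is precisely the bookkeeping the paper leaves implicit.
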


\begin{corollary}
A commutative $GS$-quasigroup is $k$-translatable only for $k=4$. It has the form $x\cdot y=[3x+3y]_5$.
\end{corollary}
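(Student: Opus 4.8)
The plan is to combine the explicit description of $k$-translatable $GS$-quasigroups furnished by Proposition \ref{P-413} with the commutativity hypothesis, and then read off the only possible value of $n$ (and hence of $k$). By Proposition \ref{P-413}, if $(Q,\cdot)$ is a $k$-translatable $GS$-quasigroup of order $n$, then $x\cdot y=[(k-1)x+(2-k)y]_n$ and $[k^2-3k+1]_n=0$. So the whole problem reduces to understanding what commutativity imposes on the coefficients $k-1$ and $2-k$, after which the single congruence $[k^2-3k+1]_n=0$ should do the rest.

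First I would extract the arithmetic consequence of commutativity. Writing out $x\cdot y=y\cdot x$ for the linear form above gives $[(k-1)-(2-k)](x-y)\equiv 0\pmod n$ for all $x,y\in Q$; taking a pair with $x-y=1$, this is equivalent to the single congruence $[2k-3]_n=0$. Thus commutativity is exactly the requirement $2k\equiv 3\pmod n$.

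Next I would eliminate $k$ between the two congruences $[2k-3]_n=0$ and $[k^2-3k+1]_n=0$. Multiplying the second by $4$ and regrouping gives $(2k)^2-6(2k)+4\equiv 0\pmod n$; substituting $2k\equiv 3$ yields $9-18+4=-5\equiv 0\pmod n$, that is, $n\mid 5$. Since a nontrivial quasigroup has order $n>1$, the only possibility is $n=5$. Then $2k\equiv 3\pmod 5$ forces $k\equiv 4\pmod 5$, so $k=4$; substituting back, and using $2-k\equiv 3\pmod 5$, gives $x\cdot y=[3x+3y]_5$. The sufficiency direction is then immediate, since by Proposition \ref{P-413} the groupoid $x\cdot y=[3x+3y]_5$ is a $4$-translatable $GS$-quasigroup and it is visibly commutative.

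I expect the only genuinely delicate point to be the elimination step: one must square $2k$ rather than work with $k$ directly, so as to apply $2k\equiv 3$ without needing to invert $2$ modulo $n$, and one must record that the resulting divisibility $n\mid 5$ together with nontriviality pins $n$ down uniquely. Everything else is routine modular bookkeeping.
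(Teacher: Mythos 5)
Your proof is correct, but it takes a different route from the paper's at the decisive step. Both arguments begin identically: commutativity of the linear form $x\cdot y=[(k-1)x+(2-k)y]_n$ forces $[k-1]_n=[2-k]_n$, i.e.\ $[2k]_n=3$. From there the paper does not touch the quadratic congruence at all; instead it goes back to the definition of $k$-translatability, applying it to the instance $2\cdot 1=1\cdot 2$: translatability gives $1\cdot 2=2\cdot[2+k]_n$, so left cancellation yields $[2+k]_n=1$, i.e.\ $k=n-1$, and then $3=[2k]_n=[-2]_n$ forces $[5]_n=0$, hence $n=5$, $k=4$. You instead eliminate $k$ between $[2k-3]_n=0$ and $[k^2-3k+1]_n=0$ by a resultant-style computation, $4(k^2-3k+1)=(2k)^2-6(2k)+4\equiv 9-18+4=-5\pmod n$, so $n\mid 5$. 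Your version is self-contained modular arithmetic inside Proposition \ref{P-413}, and your precaution of working with $2k$ rather than inverting $2$ is harmless though not strictly needed ($n$ is odd by Lemma \ref{L-41}, so $2$ is invertible). The paper's version is shorter and isolates a reusable general principle --- a commutative $k$-translatable left-cancellative groupoid must have $k=n-1$ --- which is exactly the same device the paper deploys in the parallel corollaries for commutative hexagonal and commutative $C3$ quasigroups. Both proofs share the same minor implicit assumption that the quasigroup is nontrivial, so $n\mid 5$ pins down $n=5$.
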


\begin{proof}
Indeed, if it is commutative, then $[k-1]_n=[2-k]_n$, i.e., $[2k]_n=3$. From $2\cdot1=1\cdot 2$ it follows $k=n-1$. Thus $3=[2k]_n=[-2]_n$. So, $n=5$ and $k=4$.
\end{proof}

Other quasigroups associated with the affine geometry are $ARO$-quasigroups defined as idempotent medial quasigroups satisfying the identity  
$xy\cdot y=yx\cdot x$ (cf. \cite{11}). Thus, $k$-translatable $ARO$-quasigroups have odd order $n$ and $(k,n)=1$.

\begin{proposition}\label{P-416} A naturally ordered groupoid $(Q,\cdot)$ of order $n$ is a $k$-translatable $ARO$-quasigroup if and only if $x\cdot y=[ax+by]_n$ for some  $a,b\in\mathbb{Z}_n$ such that $[a+b]_n=1$, $[a+bk]_n=0$ and $[2a^2]_n=1$.   
\end{proposition}
\begin{proof} If $(Q,\cdot)$ is a $k$-translatable $ARO$-quasigroup, then, by Theorem \ref{T-42}, $x\cdot y=[ax+by]_n$, $[a+b]_n=1$ and $[a+bk]_n=0$ for some $a,b\in\mathbb{Z}_n$. The identity $xy\cdot y=yx\cdot x$ implies $[2a^2]_n=1$.
     Conversely, a groupoid $(Q,\cdot)$ satisfying these conditions is a $k$-tanslatable $ARO$-groupoid. From $[2a^2]_n=1$ it follows that $(a,n)=1$. Now, if $d|b$ and $d|n$, then, by $[a+bk]_n=0$, $d|a$, so $(b,n)=1$. Thus $(Q,\cdot)$ is a quasigroup.
\end{proof}

Now we describe several types of idempotent $k$-translatable quasigroups associated with
combinatorial designs \cite{CD}. First, we will describe idempotent $k$-translatable quasigroups 
satisfying the identity $(xy\cdot y)y=x$. Idempotent quasigroups satisfying this identity are called {\em $C3$ quasigroups} and correspond to a class of resolvable Mendelsohn triple systems (see, for example, \cite{BMM}). It was shown in \cite{Ben3} that finite $C3$ quasigroups exist only for orders $n=3t+1$. By Lemma \ref{L-41}, $k$-translatable $C3$ quasigroups have odd order $n$ and $(k,n)=1$.

\begin{proposition}\label{P-417} A naturally ordered groupoid $(Q,\cdot)$ of order $n$ is a $k$-translatable $C3$ quasigroup if and only if \ $x\cdot y=[ax+by]_n$ for some $a,b\in\mathbb{Z}_n$ such that $[a+b]_n=1$, $[a+bk]_n=0$ and $[a^3]_n=1$.
\end{proposition}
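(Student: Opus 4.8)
The plan is to follow the same two-part template used for the $ARO$- and $GS$-quasigroups in Propositions \ref{P-416} and \ref{P-413}: invoke Theorem \ref{T-42} to fix the multiplication as a linear form modulo $n$, then reduce the single defining identity $(xy\cdot y)y=x$ to a polynomial condition on $a$.

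For necessity, I would begin from Theorem \ref{T-42}: since a $C3$ quasigroup is by definition idempotent, and it is here assumed $k$-translatable, its multiplication has the form $x\cdot y=[ax+by]_n$ with $[a+b]_n=1$ and $[a+bk]_n=0$. The key step is expanding the left side of the $C3$ identity by iterated substitution. Writing $xy=[ax+by]_n$, one gets $xy\cdot y=[a^2x+(ab+b)y]_n$ and then $(xy\cdot y)y=[a^3x+(a^2b+ab+b)y]_n$. Requiring this to equal $x$ for all $x,y$ forces $[a^3]_n=1$ from the coefficient of $x$, together with $[a^2b+ab+b]_n=0$ from the coefficient of $y$. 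The observation that closes the argument is that the second condition is redundant: substituting $b=[1-a]_n$ gives $[a^2b+ab+b]_n=[b(a^2+a+1)]_n=[(1-a)(a^2+a+1)]_n=[1-a^3]_n$, which vanishes precisely when $[a^3]_n=1$. Thus, over idempotent $k$-translatable groupoids of this form, the $C3$ identity is equivalent to the single condition $[a^3]_n=1$.

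For sufficiency, a groupoid with $x\cdot y=[ax+by]_n$ satisfying $[a+b]_n=1$ and $[a+bk]_n=0$ is idempotent and $k$-translatable by Theorem \ref{T-42}, and the computation above shows that $[a^3]_n=1$ makes both coefficient conditions hold, so the $C3$ identity is satisfied. It remains to verify that the groupoid is genuinely a quasigroup, i.e.\ that $(a,n)=(b,n)=1$. Here $[a^3]_n=1$ immediately yields $(a,n)=1$, since $a\cdot[a^2]_n\equiv 1$; and then, exactly as in the proof of Proposition \ref{P-416}, any common divisor $d$ of $b$ and $n$ divides $bk$ and hence, by $[a+bk]_n=0$, divides $a$, forcing $d=1$ and so $(b,n)=1$.

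The only place requiring any care is the two-fold substitution defining $(xy\cdot y)y$; once the coefficient of $y$ is recognised as $[1-a^3]_n$, everything collapses to $[a^3]_n=1$, so there is no real obstacle beyond the bookkeeping. I would not need mediality as a separate hypothesis, since it follows automatically from the linear form $x\cdot y=[ax+by]_n$.
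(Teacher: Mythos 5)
Your proposal is correct and follows essentially the same route as the paper: invoke Theorem \ref{T-42} to obtain the linear form $x\cdot y=[ax+by]_n$ with $[a+b]_n=1$ and $[a+bk]_n=0$, reduce the identity $(xy\cdot y)y=x$ to $[a^3]_n=1$ (the paper states this reduction without the coefficient computation you supply, including the observation that the $y$-coefficient condition $[b(a^2+a+1)]_n=[1-a^3]_n=0$ is redundant), and in the converse direction use $[a^3]_n=1$ to get the cancellativity needed for a quasigroup. Your explicit derivation of $(b,n)=1$ from $[a+bk]_n=0$ plays the role of the left cancellativity that the paper takes for granted from idempotency and $k$-translatability, so the two arguments match in substance.
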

\begin{proof} 

Proof. By Theorem \ref{T-42}, the multiplication in a $k$-translatable $C3$ quasigroup is defined by $x\cdot y=[ax+by]_n$, where $a,b\in\mathbb{Z}_n$, $[a+b]_n=1$ and $[a+bk]_n=0$. From $(xy\cdot y)y=x$ it follows that $[a^3]_n=1$.
 
Conversely, a groupoid satisfying these conditions is an idempotent, $k$-trans\-la\-ta\-ble $C3$ groupoid. From $[a^3]_n=1$ it follows that it is right cancellative and, hence, it is a quasigroup. 
\end{proof}

\begin{corollary}\label{C-418} A commutative $C3$ quasigroup is $k$-translatable only for $k=6$ and has the form $x\cdot y=[4x+4y]_7$.
\end{corollary}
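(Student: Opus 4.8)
The plan is to specialize Proposition \ref{P-417} to the commutative case and solve the resulting congruences. First I would recall that by Proposition \ref{P-417} a $k$-translatable $C3$ quasigroup has the form $x\cdot y=[ax+by]_n$ with $[a+b]_n=1$, $[a+bk]_n=0$ and $[a^3]_n=1$. Commutativity of $(Q,\cdot)$ forces $[ax+by]_n=[ay+bx]_n$ for all $x,y$, which gives $[a]_n=[b]_n$. Combined with $[a+b]_n=1$ this yields $[2a]_n=1$, so $a=b$ and $a$ is the inverse of $2$ modulo $n$ (in particular $n$ is odd, consistent with Lemma \ref{L-41}$(vi)$).

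Next I would use the $C3$ condition $[a^3]_n=1$ together with $[2a]_n=1$. From $[2a]_n=1$ we get $[8a^3]_n=1$, and since $[a^3]_n=1$ this gives $[8]_n=1$, i.e.\ $n\mid 7$. As $n>1$ (a quasigroup has at least one element and a commutative $C3$ quasigroup is nontrivial), the only possibility is $n=7$. Then $a$ is the inverse of $2$ modulo $7$, so $a=4$ (indeed $[2\cdot 4]_7=1$), giving $b=4$ as well, and one checks $[4^3]_7=[64]_7=1$ so the $C3$ condition holds. Hence the multiplication is $x\cdot y=[4x+4y]_7$.

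Finally I would determine $k$ from $[a+bk]_n=0$: with $a=b=4$ and $n=7$ this reads $[4+4k]_7=0$, i.e.\ $[4(1+k)]_7=0$, and since $(4,7)=1$ this forces $[1+k]_7=0$, so $[k]_7=6$, i.e.\ $k=6$. Conversely, the groupoid $x\cdot y=[4x+4y]_7$ is clearly commutative, idempotent since $x\cdot x=[8x]_7=x$, and satisfies $[4+4]_7=1$, $[4+4\cdot 6]_7=[28]_7=0$ and $[4^3]_7=1$, so by Proposition \ref{P-417} it is a $6$-translatable $C3$ quasigroup.

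I do not expect any serious obstacle here; the whole argument is a short deduction from Proposition \ref{P-417}. The only point requiring a moment's care is the step $[8]_n=1$: one must make sure to exploit $[2a]_n=1$ to eliminate $a$ from $[a^3]_n=1$ rather than trying to solve the two congruences independently, and then observe that $n\mid 7$ together with oddness pins down $n=7$ uniquely.
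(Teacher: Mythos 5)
Your proposal is correct and follows essentially the same route as the paper: specialize Proposition \ref{P-417} to the commutative case (so $a=b$ and $[2a]_n=1$), combine with the $C3$ condition to force $n=7$, and then read off $a=b=4$ and $k=6$. The only difference is a micro-step: the paper works from $[a^2+a+1]_n=0$ and computes $[2(a^2+a+1)]_n=[a+3]_n=0$, while you cube $[2a]_n=1$ against $[a^3]_n=1$ to get $[8]_n=1$ directly --- your variant is, if anything, slightly cleaner since it avoids the implicit use of the invertibility of $a-1$ needed to pass from $[a^3]_n=1$ to $[a^2+a+1]_n=0$.
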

\begin{proof} In this case $a=b$, $[2a]_n=1$ and $[a^2+a+1]_n=0$. Thus, $0=[2(a^2+a+1)]_n=[a+3]_n$. So, $n=7$ and $k=6$. 
\end{proof}

\begin{proposition}\label{P-416} A naturally ordered groupoid $(Q,\cdot)$ of order $n$ is a $k$-translatable $ARO$-quasigroup if and only if $x\cdot y=[ax+by]_n$ for some  $a,b\in\mathbb{Z}_n$ such that $[a+b]_n=1$, $[a+bk]_n=0$ and $[2a^2]_n=1$.   
\end{proposition}
\begin{proof} If $(Q,\cdot)$ is a $k$-translatable $ARO$-quasigroup, then, by Theorem \ref{T-42}, $x\cdot y=[ax+by]_n$, $[a+b]_n=1$ and $[a+bk]_n=0$ for some $a,b\in\mathbb{Z}_n$. The identity $xy\cdot y=yx\cdot x$ implies $[2a^2]_n=1$.
     Conversely, a groupoid $(Q,\cdot)$ satisfying these conditions is a $k$-tanslatable $ARO$-groupoid. From $[2a^2]_n=1$ it follows that $(a,n)=1$. Now, if $d|b$ and $d|n$, then, by $[a+bk]_n=0$, $d|a$, so $(b,n)=1$. Thus $(Q,\cdot)$ is a quasigroup.
\end{proof}

A groupoid $(Q,\cdot)$ satisfying the identity $x\cdot xy=yx$ is called a {\it Stein groupoid}. Left cancellative Stein groupoids are idempotent and right cancellative. So, idempotent $k$-translatable Stein groupoids are idempotent $k$-translatable quasigroups. Stein quasigroups have applications in the theory of Latin squares (cf. \cite{Kee} and \cite{Scerb}) and combinatorial designs (cf. \cite{Ben}). 

Below we show that $k$-translatable Stein quasigroups are associated with {\it left modular groupoids}, i.e., groupoids satisfying the identity $x\cdot yz=z\cdot yx$. 
Their dual groupoids are associated with groupoids satisfying the identity $xy\cdot z=zy\cdot x$ and are called {\it right modular}.

By Lemma \ref{L-41}, $k$-translatable Stein quasigroups are of odd order $n$ and $(k,n)=1$. 

As a simple consequence of Theorem \ref{T-42} we obtain the following two lemmas.
\begin{lemma}\label{l-mod}
A naturally ordered idempotent $k$-translatable groupoid $(Q,\cdot)$ of order $n$ is left modular if and only if $x\cdot y=[ax+by]_n$ for some $a,b\in \mathbb{Z}_n$, $[a+b]_n=1$, $[a+bk]_n=0$ and $[a^2-3a+1]_n=0$.
\end{lemma}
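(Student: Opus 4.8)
The plan is to reduce everything to a coefficient comparison using the normal form from Theorem \ref{T-42}. By that theorem, a naturally ordered idempotent $k$-translatable groupoid of order $n$ is exactly one of the form $x\cdot y=[ax+by]_n$ with $[a+b]_n=1$ and $[a+bk]_n=0$. Hence the first two congruences in the statement are automatic, and the entire content of the lemma is that, among such groupoids, left modularity is equivalent to the single extra condition $[a^2-3a+1]_n=0$. So it suffices to expand the defining identity $x\cdot yz=z\cdot yx$ in this normal form and read off what it imposes on $a$ and $b$.

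First I would compute both sides directly:
\[
x\cdot(yz)=[ax+b(ay+bz)]_n=[ax+aby+b^2z]_n,\qquad
z\cdot(yx)=[az+b(ay+bx)]_n=[b^2x+aby+az]_n.
\]
These two linear forms in $x,y,z$ agree for all $x,y,z\in Q$ precisely when their coefficients agree modulo $n$; evaluating at the three standard generators (recalling the convention $0=n$) justifies this passage. The $y$-coefficients already coincide, while the $x$- and $z$-coefficients each give the single requirement $[a-b^2]_n=0$, i.e. $[b^2-a]_n=0$.

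It then remains to convert $[a=b^2]_n$ into the stated quadratic. Using $[a+b]_n=1$, that is $b=[1-a]_n$, I would substitute to obtain $a\equiv(1-a)^2=1-2a+a^2$, which rearranges to $[a^2-3a+1]_n=0$; this settles the "only if" direction. For the converse, assuming the four listed congruences, Theorem \ref{T-42} already guarantees that $(Q,\cdot)$ is idempotent and $k$-translatable, and running the substitution backwards ($[a^2-3a+1]_n=0$ together with $b=[1-a]_n$ yields $[a-b^2]_n=0$) makes the coefficients on the two sides of $x\cdot yz=z\cdot yx$ match, so the identity holds and $(Q,\cdot)$ is left modular.

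The computation is entirely routine, so there is no genuine obstacle beyond bookkeeping. The only point deserving a word of care is the step from "the identity holds for all $x,y,z$" to "the coefficients are equal modulo $n$", which rests on the freeness of the three variables over $\mathbb{Z}_n$ and is verified by evaluating at the standard generators.
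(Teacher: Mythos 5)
Your proof is correct and follows exactly the route the paper intends: the paper states this lemma without a written proof, calling it ``a simple consequence of Theorem \ref{T-42}'', and your argument---invoking Theorem \ref{T-42} for the normal form $x\cdot y=[ax+by]_n$, expanding $x\cdot yz=z\cdot yx$ to get $[a-b^2]_n=0$, and substituting $b=[1-a]_n$ to obtain $[a^2-3a+1]_n=0$---is precisely that omitted routine verification, in both directions.
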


\begin{lemma}\label{r-mod}
A naturally ordered idempotent $k$-translatable groupoid $(Q,\cdot)$ of order $n$ is right modular if and only if $x\cdot y=[ax+by]_n$ for some $a,b\in \mathbb{Z}_n$, $[a+b]_n=1$, $[a+bk]_n=0$ and $[a^2+a-1]_n=0$.
\end{lemma}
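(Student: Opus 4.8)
The plan is to invoke Theorem~\ref{T-42} to fix the shape of the multiplication and then reduce the right-modular identity to a single congruence on the coefficient $a$. A right modular groupoid satisfies $xy\cdot z=zy\cdot x$ and is, by hypothesis here, idempotent and $k$-translatable, so Theorem~\ref{T-42} immediately gives that it must have the form $x\cdot y=[ax+by]_n$ with $[a+b]_n=1$ and $[a+bk]_n=0$. The only thing left to pin down is which extra condition on $a,b$ is forced by right modularity, and conversely that this condition, together with the Theorem~\ref{T-42} conditions, suffices.

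First I would expand both sides of $xy\cdot z=zy\cdot x$ in the linear form. Writing $xy=[ax+by]_n$, one gets $xy\cdot z=[a(ax+by)+bz]_n=[a^2x+aby+bz]_n$ and $zy\cdot x=[a(az+by)+bx]_n=[a^2z+aby+bx]_n$. Setting these equal for all $x,y,z$, the common middle term $aby$ cancels and the identity collapses to $[a^2x+bz]_n=[a^2z+bx]_n$, that is, $[(a^2-b)(x-z)]_n=0$ for all $x,z\in Q$.

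Next I would read off the congruence. Since $Q=\mathbb{Z}_n$ and $x-z$ ranges over every residue as $x$ varies (for instance $x-z=1$ is attainable), the displayed identity holds for all $x,z$ if and only if $[a^2-b]_n=0$. Substituting $b=[1-a]_n$, which is available from $[a+b]_n=1$, turns this into $[a^2+a-1]_n=0$, the stated condition. Reading the chain of equivalences in the forward direction yields necessity.

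For sufficiency I would verify that a groupoid of the prescribed form really lies in the target class: idempotency follows from $[(a+b)x]_n=x$, $k$-translatability from $[a+bk]_n=0$ (so that $[x+1]_n\cdot[y+k]_n=[ax+by+(a+bk)]_n=x\cdot y$), and right modularity from running the computation above backwards once $[a^2-b]_n=0$ is in hand. The only step meriting a moment's care is the passage from ``the identity holds for all $x,z$'' to the coefficient congruence $[a^2-b]_n=0$; but since that coefficient multiplies every residue $x-z$, there is no genuine obstacle, and the whole argument is indeed the promised simple consequence of Theorem~\ref{T-42}.
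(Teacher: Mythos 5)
Your proposal is correct and matches the paper's intent exactly: the paper states this lemma as "a simple consequence of Theorem \ref{T-42}," and your argument — fix the linear form $x\cdot y=[ax+by]_n$ via that theorem, expand $xy\cdot z=zy\cdot x$ to get $[(a^2-b)(x-z)]_n=0$, and substitute $b=[1-a]_n$ to obtain $[a^2+a-1]_n=0$ — is precisely that consequence, with both directions handled properly.
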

\begin{proposition}\label{P-421} 
A naturally ordered groupoid $(Q,\cdot)$ of order $n$ is a $k$-translatable Stein quasigroup if and only if $\,[k^2-k-1]_n=0$ and $x\cdot y=[(k+1)x-ky]_n$. 
\end{proposition}
\begin{proof}
Let $(Q,\cdot)$ be a $k$-translatable Stein quasigroup of order $n$. Then, by Theorem \ref{T-42}, $x\cdot y=[ax+by]_n$ for some $a,b\in \mathbb{Z}_n$, where $[a+b]_n=1$, $[a+kb]_n=0$, $[b^2]_n=a$ and $[a^2-3a+1]_n=0$.  From $0=[a+bk]_n$ we obtain $0=[ab+b^2k]_n=[a(b+k)]_n=[(a-3)a(b+k)]_n=[-(b+k)]_n$. Thus, $b=[-k]_n$. Therefore $x\cdot y=[(k+1)x-ky]_n$. Obviously $[k^2-k-1]_n=0$.

Conversely, a groupoid satisfying these conditions is an idempotent $k$-trans\-lata\-ble Stein groupoid. Since $[(k+1)(k-2)]_n=-1$, $(Q,\cdot)$ is right cancellative and, so, it is a quasigroup.
\end{proof}

\begin{corollary}
$k$-translatable Stein quasigroups cannot be commutative.
\end{corollary}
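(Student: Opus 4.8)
The final statement to prove is the Corollary that $k$-translatable Stein quasigroups cannot be commutative. My plan is to argue by contradiction, using the explicit structure provided by the immediately preceding Proposition~\ref{P-421}, which gives the multiplication of a $k$-translatable Stein quasigroup as $x\cdot y=[(k+1)x-ky]_n$ with $[k^2-k-1]_n=0$. This is the natural tool since it completely pins down the form of the operation.

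First I would write out what commutativity forces. For the operation $x\cdot y=[(k+1)x-ky]_n$, commutativity means $[(k+1)x-ky]_n=[(k+1)y-kx]_n$ for all $x,y\in Q$. Collecting terms, this is equivalent to $[(2k+1)(x-y)]_n=0$ for all $x,y$, and since the quasigroup contains pairs with $x-y=1$ (the ordering is $1,2,\ldots,n$), this yields $[2k+1]_n=0$, i.e.\ $[2k]_n=[-1]_n=[n-1]_n$. Equivalently one may simply compare coefficients: commutativity forces $[k+1]_n=[-k]_n$, which is the same relation $[2k+1]_n=0$.

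The key step is then to combine $[2k+1]_n=0$ with the defining Stein relation $[k^2-k-1]_n=0$ to reach a contradiction. From $[2k+1]_n=0$ I get $[2k]_n=[-1]_n$, so multiplying the Stein relation by $4$ gives $[4k^2-4k-4]_n=0$, and substituting $2k\equiv -1$ (hence $4k^2\equiv 1$ and $4k\equiv -2$) yields $[1-(-2)-4]_n=[-1]_n=0$, forcing $n=1$. Thus no proper quasigroup of this form can be commutative. The main obstacle, and the point to handle carefully, is making sure the modular arithmetic reduction is clean: I would verify that $(k,n)=1$ (already guaranteed, since Lemma~\ref{L-41} and Proposition~\ref{P-421} ensure the quasigroup condition and odd order $n$) so that multiplying relations by $2$ or $4$ does not lose information, and confirm that the degenerate case $n=1$ is genuinely the only solution, which is excluded for any nontrivial quasigroup.

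Altogether the argument is short: assume commutativity, extract $[2k+1]_n=0$ from equating the two coefficient patterns, substitute into $[k^2-k-1]_n=0$, and derive $[n]_n$ dividing a unit, forcing $n=1$. I expect the entire proof to fit in a few lines, with the only subtlety being the bookkeeping in the modular substitution; there is no deep obstacle here because Proposition~\ref{P-421} has already done the structural heavy lifting.
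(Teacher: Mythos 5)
Your proposal is correct and takes the same route the paper intends: the corollary is stated without proof as an immediate consequence of Proposition~\ref{P-421}, and your argument (commutativity forces the coefficients to agree, i.e.\ $[2k+1]_n=0$, which together with $[k^2-k-1]_n=0$ yields $[-1]_n=0$, so $n=1$) is exactly that omitted computation. The only point worth noting is that you correctly flag the degenerate conclusion $n=1$ as excluded, since the paper's quasigroups are nontrivial.
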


\begin{corollary}
A groupoid $(Q,\cdot)$ is a $k$-translatable Stein quasigroup if and only if it is an idempotent $k$-translatable left modular quasigroup.
\end{corollary}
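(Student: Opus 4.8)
The plan is to reduce the claimed equivalence to a direct comparison of the two explicit parametric forms already established in this section: Proposition \ref{P-421} describes $k$-translatable Stein quasigroups, while Lemma \ref{l-mod} describes idempotent $k$-translatable left modular groupoids. Since both sides of the asserted equivalence are idempotent $k$-translatable quasigroups on the naturally ordered set $Q=\{1,\dots,n\}$, it suffices to show that the two descriptions single out the same multiplications $x\cdot y=[ax+by]_n$.

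For the forward implication, I would start from a $k$-translatable Stein quasigroup, which by Proposition \ref{P-421} satisfies $x\cdot y=[(k+1)x-ky]_n$ and $[k^2-k-1]_n=0$. Setting $a=[k+1]_n$ and $b=[-k]_n$, I would verify the three conditions of Lemma \ref{l-mod}: $[a+b]_n=1$ is immediate; $[a+bk]_n=[k+1-k^2]_n=[-(k^2-k-1)]_n=0$; and $[a^2-3a+1]_n=[(k+1)^2-3(k+1)+1]_n=[k^2-k-1]_n=0$. Idempotency is inherited, since a left cancellative Stein groupoid is idempotent. Hence the quasigroup is an idempotent $k$-translatable left modular quasigroup.

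For the converse, I would take an idempotent $k$-translatable left modular quasigroup, so by Lemma \ref{l-mod} it has the form $x\cdot y=[ax+by]_n$ with $[a+b]_n=1$, $[a+bk]_n=0$ and $[a^2-3a+1]_n=0$. Eliminating $a$ via $a=[1-b]_n$ rewrites the quadratic relation as $[b^2+b-1]_n=0$, while subtracting $[a+b]_n=1$ from $[a+bk]_n=0$ yields $[b(k-1)]_n=-1$, so in particular $b$ is a unit modulo $n$. The decisive step is to show $b=[-k]_n$: from $[b(k-1)]_n=-1$ we get $[bk]_n=[b-1]_n$, hence $[b(b+k)]_n=[b^2+bk]_n=[b^2+b-1]_n=0$, and invertibility of $b$ forces $[b+k]_n=0$. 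Then $a=[k+1]_n$ and $[b^2+b-1]_n=[k^2-k-1]_n=0$, so the multiplication is exactly the one singled out in Proposition \ref{P-421}, whence the quasigroup is Stein.

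The only genuinely nontrivial point is this last forcing of $b=[-k]_n$; everything else is a routine substitution together with the already proved parametrizations. The obstacle is that $[b^2+b-1]_n=0$ and $[b(k-1)]_n=-1$ do not separately determine $b$, so the argument must combine them multiplicatively — multiplying $b+k$ by the unit $b$ collapses the expression to the quadratic relation — rather than attempting to solve for $b$ directly.
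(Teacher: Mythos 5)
Your proof is correct and follows essentially the same route as the paper, which states this corollary as an immediate consequence of comparing the parametrization of Stein quasigroups in Proposition \ref{P-421} with that of left modular groupoids in Lemma \ref{l-mod} (both reduce to the single congruence $[a^2-3a+1]_n=0$, equivalently $[b^2+b-1]_n=0$, on the linear form). Your unit-multiplication step forcing $b=[-k]_n$ in the converse mirrors the computation already carried out in the paper's proof of Proposition \ref{P-421}, so nothing genuinely new is being done there.
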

\begin{corollary}
A groupoid $(Q,\cdot)$ of order $n$ is a $k$-translatable Stein quasigroup if and only if its dual groupoid is an idempotent $[k-1]_n$-translatable right modular quasigroup.
\end{corollary}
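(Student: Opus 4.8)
The plan is to reduce both implications to the explicit coordinate descriptions already in hand and to let Proposition \ref{dual} govern the change of translatability index when passing to the dual. The two characterizations I will lean on are Proposition \ref{P-421}, which says a $k$-translatable Stein quasigroup is exactly a groupoid of the form $x\cdot y=[(k+1)x-ky]_n$ with $[k^2-k-1]_n=0$, and Lemma \ref{r-mod}, which says an idempotent $k$-translatable right modular groupoid is exactly one of the form $x\cdot y=[ax+by]_n$ with $[a+b]_n=1$, $[a+bk]_n=0$ and $[a^2+a-1]_n=0$. I would also record at the start that the dual of a quasigroup is again a quasigroup and that idempotency is preserved under dualization (as already noted in the excerpt), so neither solvability nor idempotency needs to be re-argued.

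For necessity I would start from a $k$-translatable Stein quasigroup, apply Proposition \ref{P-421} to write $x\cdot y=[(k+1)x-ky]_n$ with $[k^2-k-1]_n=0$, and then read off the dual $x*y=y\cdot x=[-kx+(k+1)y]_n$. This already has the form $[ax+by]_n$ with $a=[-k]_n$, $b=[k+1]_n$ and $[a+b]_n=1$, so it is idempotent. Since $[k(k-1)]_n=[k^2-k]_n=1$, Proposition \ref{dual} shows the dual is $[k-1]_n$-translatable. It then remains to check the three congruences of Lemma \ref{r-mod} for this $a,b$ at index $k-1$, namely $[a+b]_n=1$, $[a+b(k-1)]_n=0$ and $[a^2+a-1]_n=0$; each of these collapses to $[k^2-k-1]_n=0$, so the dual is an idempotent $[k-1]_n$-translatable right modular quasigroup.

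For sufficiency I would assume the dual is an idempotent $[k-1]_n$-translatable right modular quasigroup, so Lemma \ref{r-mod} gives $x*y=[ax+by]_n$ with $[a+b]_n=1$, $[a+b(k-1)]_n=0$ and $[a^2+a-1]_n=0$. The last relation reads $a(a+1)=1$, so $a$ is a unit with inverse $a+1$, and together with $[a+b]_n=1$ it gives $b=1-a=[a^2]_n$. Substituting $b=a^2$ into the translatability relation and factoring out the unit $a$ yields $[a(k-1)]_n=[-1]_n$; multiplying by $a^{-1}=a+1$ gives $k-1=-(a+1)$, that is $a=[-k]_n$. Then $b=[a^2]_n=[k+1]_n$, the original groupoid is $x\cdot y=y*x=[bx+ay]_n=[(k+1)x-ky]_n$, and $[a^2+a-1]_n=0$ becomes $[k^2-k-1]_n=0$, so Proposition \ref{P-421} identifies $(Q,\cdot)$ as a $k$-translatable Stein quasigroup.

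I expect the only genuinely delicate point to be the sufficiency direction: one must show that the coefficient $a$ of the dual is forced to equal $[-k]_n$ rather than being an unrelated unit, since a priori Lemma \ref{r-mod} only ties $a$ to the index $k$ through a single translatability congruence. The key that unlocks this is the right modular identity $a(a+1)=1$, which makes $a$ invertible and lets me turn $[a(k-1)]_n=[-1]_n$ into the clean identification $a=-k$; once this is in place, $[k^2-k-1]_n=0$ and the precise Stein form $x\cdot y=[(k+1)x-ky]_n$ drop out immediately and Proposition \ref{P-421} finishes the argument.
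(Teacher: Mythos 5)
Your proof is correct and takes essentially the same route the paper intends: the corollary is stated there without proof as a direct consequence of Proposition \ref{P-421}, Lemma \ref{r-mod} and Proposition \ref{dual}, which are exactly the tools you invoke. The only difference is that you spell out the congruence manipulations explicitly (in particular, forcing $a=[-k]_n$ via the unit relation $[a(a+1)]_n=1$ in the sufficiency direction), details the paper leaves implicit.
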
 

As a consequence of the above results we obtain the following two theorems.

\begin{theorem}\label{T-425}
A naturally ordered groupoid $(Q,\cdot)$ of order $n$ with the multiplication defined by $x\cdot y=[ax+by]_n$, where $a,b\in \mathbb{Z}_n$, $[a+b]_n=1$ and $[a+bk]_n=0$
is a $k$-translatable quasigroup that is 
\begin{enumerate}
\item[$(1)$]  quadratical if and only if $[2a^2-2a+1]_n=0$,
\item[$(2)$]  hexagonal if and only if $[a^2-a+1]_n=0$,
\item[$(3)$]  $GS$-quasigroup if and only if $[a^2-a-1]_n=0$,
\item[$(4)$]  right modular quasigroup if and only if $[a^2+a-1]_n=0$,
\item[$(5)$]  left modular quasigroup if and only if $[a^2-3a+1]_n=0$,
\item[$(6)$]  Stein quasigroup if and only if $[a^2-3a+1]_n=0$,
\item[$(7)$]  $ARO$-quasigroup if and only if $[2a^2]_n=1$,
\item[$(8)$]  $C3$ quasigroup if and only if $[a^3]_n=1$.
\end{enumerate}
\end{theorem}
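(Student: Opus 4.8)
The plan is to read off all eight equivalences from the characterizations already established earlier in this section, since each of the listed quasigroup types has been pinned down individually. Throughout I would use $b=[1-a]_n$, which is forced by $[a+b]_n=1$, together with the relation $[a+bk]_n=0$ linking the multiplier $a$ to the translatability index $k$; note that in each case the displayed condition on $a$ forces $(a,n)=(b,n)=(k,n)=1$, so that $(Q,\cdot)$ is indeed a quasigroup by Corollary \ref{C-kn}, and the assertion that such a groupoid is a $k$-translatable quasigroup is legitimate.

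Five of the eight conditions are already stated in terms of $a$ and can be quoted verbatim: $(1)$ quadratical is Corollary \ref{C-46} (there written as $[2ab]_n=1$, equivalently $[2a^2-2a+1]_n=0$ after substituting $b=[1-a]_n$); $(4)$ right modular is Lemma \ref{r-mod}; $(5)$ left modular is Lemma \ref{l-mod}; $(7)$ $ARO$ is Proposition \ref{P-416}; and $(8)$ $C3$ is Proposition \ref{P-417}. For $(6)$ Stein I would invoke the corollary following Proposition \ref{P-421}, which identifies $k$-translatable Stein quasigroups with idempotent $k$-translatable left modular quasigroups, thereby reducing it to $(5)$; alternatively one substitutes the Stein form $x\cdot y=[(k+1)x-ky]_n$ from Proposition \ref{P-421}, so that $a=[k+1]_n$, and checks $[a^2-3a+1]_n=[k^2-k-1]_n$.

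The only genuine computation is the coordinate change for the three types whose earlier characterizations are phrased in $k$ rather than $a$. For $(2)$ hexagonal, Proposition \ref{P-48} gives the form $x\cdot y=[(1-k)x+ky]_n$, i.e. $a=[1-k]_n$, whence substituting $k=[1-a]_n$ into $[k^2-k+1]_n=0$ yields $[a^2-a+1]_n=0$. For $(3)$ $GS$, Proposition \ref{P-413} gives $a=[k-1]_n$, so $k=[a+1]_n$ and $[k^2-3k+1]_n=0$ becomes $[a^2-a-1]_n=0$. These substitutions are routine expansions modulo $n$; the one point requiring care is verifying in each case that the specific relation between $a$ and $k$ coming from the individual proposition is consistent with the generic relation $[a+bk]_n=0$ assumed here, so that no spurious constraint is introduced.

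The main obstacle, such as it is, is bookkeeping rather than ideas: keeping the signs straight in the three substitutions and confirming that, for every identity, the two linear conditions obtained by comparing the coefficients of $x$ and of $y$ collapse to the single stated quadratic (or cubic) condition on $a$. Should one prefer a self-contained argument, the same result follows by substituting $x\cdot y=[ax+by]_n$ directly into each defining identity, expanding with $b=[1-a]_n$, and equating coefficients of $x$ and of $y$; in every case the coefficient-of-$y$ equation turns out to be equivalent to the coefficient-of-$x$ equation, leaving exactly the condition recorded in the theorem.
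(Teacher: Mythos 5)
Your proposal is correct and takes essentially the same approach as the paper: the paper gives no separate proof of Theorem \ref{T-425}, stating only that it follows ``as a consequence of the above results,'' i.e., from Corollary \ref{C-46}, Propositions \ref{P-48}, \ref{P-413}, \ref{P-416}, \ref{P-417}, \ref{P-421} and Lemmas \ref{l-mod}, \ref{r-mod}. Your $k\leftrightarrow a$ substitutions in the hexagonal, $GS$ and Stein cases (e.g.\ $k=[1-a]_n$, $k=[a+1]_n$, $a=[k+1]_n$) and your observation that each stated condition on $a$ forces $(a,n)=(b,n)=(k,n)=1$, hence quasigroup-ness, correctly supply the routine details the paper leaves implicit.
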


\begin{theorem}\label{T-426} A naturally ordered groupoid $(Q,\cdot)$ of order $n$ with the multiplication defined by $x\cdot y=[ax+by]_n$, where $a,b\in\mathbb{Z}_n$, $[a+b]_n=1$ and $[a+bk]_n=0$, is a $k$-transalatable
\begin{enumerate}
\item[$(1)$] quadratical quasigroup if and only if $k=[1-2a]_n$,
\item[$(2)$] hexagonal quasigroup if and only if $k=[1-a]_n$,
\item[$(3)$] $GS$-quasigroup if and only if $k=[a+1]_n$,
\item[$(4)$] right modular quasigroup if and only if $k=[-1-a]_n$,
\item[$(5)$] left modular quasigroup if and only if $k=[a-1]_n$,
\item[$(6)$] $ARO$-quasigroup if and only if $k=[-1-2a]_n$,
\item[$(7)$]  $C3$ quasigroup if and only if $[(1-a^2)k]_n=1$.
\end{enumerate}
\end{theorem}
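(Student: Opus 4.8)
The plan is to exploit the single linear relation tying $k$ to $a$, and then read off each case from Theorem \ref{T-425}. Throughout I would write $b=[1-a]_n$ (forced by $[a+b]_n=1$) and record the two facts that drive everything. First, since $(Q,\cdot)$ is a quasigroup, the remark following Theorem \ref{T-42} gives $(b,n)=1$, hence $(1-a,n)=(a-1,n)=1$. Second, the $k$-translatability hypothesis $[a+bk]_n=0$ rewrites as
\[
[(1-a)k]_n=[-a]_n .
\]
Because $(1-a,n)=1$, this congruence has a unique solution $k\pmod n$, so pinning down the value of $k$ is the same as verifying that a proposed value satisfies it.

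For the six ``named value'' cases (1)--(6) I would run one uniform argument. For each type, Theorem \ref{T-425} supplies a quadratic condition $P(a)\equiv 0\pmod n$ (for instance $[2a^2-2a+1]_n=0$ for quadratical), and I claim the candidate value $k_0$ is characterised by exactly that condition. The verification is always the same single line: substitute $k_0$ into $(1-a)k_0+a$ and simplify to obtain precisely $P(a)$. Explicitly, $(1-a)(1-2a)+a=2a^2-2a+1$, $\;(1-a)(1-a)+a=a^2-a+1$, $\;(1-a)(a+1)+a=-(a^2-a-1)$, $\;(1-a)(-1-a)+a=a^2+a-1$, $\;(1-a)(a-1)+a=-(a^2-3a+1)$ and $(1-a)(-1-2a)+a=2a^2-1$, matching lines $(1)$--$(5)$ and $(7)$ of Theorem \ref{T-425} respectively. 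Hence, if the quasigroup is of the given type, then $P(a)\equiv 0$ by Theorem \ref{T-425}, so $(1-a)k_0\equiv -a$, and uniqueness forces $k=k_0$; conversely, if $k=k_0$ then the defining relation $(1-a)k\equiv -a$ yields $P(a)\equiv 0$, whence the type by Theorem \ref{T-425}.

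The $C3$ case (7) is the one I would handle separately, and I expect it to be the only genuine obstacle, since the stated condition $[(1-a^2)k]_n=1$ is not of the form $k=k_0$. Here I would multiply $(1-a)k\equiv -a$ by $(1+a)$ to obtain $(1-a^2)k\equiv -a-a^2$, so that $[(1-a^2)k]_n=1$ holds if and only if $a^2+a+1\equiv 0$. On the other side, Theorem \ref{T-425}$(8)$ characterises $C3$ by $[a^3]_n=1$; since $a^3-1=(a-1)(a^2+a+1)$ with $(a-1,n)=1$, this is equivalent to $a^2+a+1\equiv 0$. Chaining the two equivalences gives (7).

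The only delicate point is the cancellation of the factor $a-1$ in the $C3$ step, which is legitimate precisely because $(a-1,n)=1$; this is exactly where the quasigroup hypothesis is used, and I would flag it explicitly. Everything else reduces to the uniqueness of the solution of $[(1-a)k]_n=[-a]_n$ together with the elementary arithmetic identities above, so the write-up should be short.
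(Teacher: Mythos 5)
Your proof is correct, and it takes essentially the route the paper intends: the paper states Theorem \ref{T-426} without an explicit proof, as an immediate consequence of Theorem \ref{T-425} and the preceding propositions, and your uniform argument --- combining each quadratic condition $P(a)\equiv 0$ from Theorem \ref{T-425} with the linear relation $[(1-a)k]_n=[-a]_n$ and the uniqueness of its solution when $(1-a,n)=1$ --- is a complete and accurate filling-in of that derivation. All six polynomial identities check out, and your separate treatment of the $C3$ case (via $a^3-1=(a-1)(a^2+a+1)$, with the cancellation of $a-1$ justified by $(a-1,n)=1$ exactly where the quasigroup hypothesis is available) is precisely the extra step that case $(7)$ requires.
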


\medskip

Quasigroups satisfying the identity \ $x(xy\cdot z)=(y\cdot zx)x$ are called {\it Cheban quasigroups}.

\begin{proposition} There are no idempotent $k$-translatable Cheban quasigroups. 
	\end{proposition}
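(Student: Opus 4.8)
The plan is to assume that $(Q,\cdot)$ is an idempotent $k$-translatable Cheban quasigroup of order $n$ and to force $n=1$, so that no proper such quasigroup can exist. By Theorem \ref{T-42}, its multiplication must have the form $x\cdot y=[ax+by]_n$ with $[a+b]_n=1$; moreover, as noted immediately after that theorem, $(b,n)=1$, so $b$ is a unit modulo $n$, and by Lemma \ref{L-41}$(vi)$ the order $n$ is odd, so $2$ is also invertible. The strategy is purely computational: substitute this linear form into the Cheban identity $x(xy\cdot z)=(y\cdot zx)x$, expand both sides as linear expressions $[\alpha x+\beta y+\gamma z]_n$, and use that two such forms agree for all $x,y,z$ if and only if their coefficients agree modulo $n$.

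Carrying out the expansion, the left-hand side becomes $[(a+a^2b)x+ab^2y+b^2z]_n$ and the right-hand side becomes $[(ab^2+b)x+a^2y+a^2bz]_n$. Comparing coefficients yields three congruences; the cleanest is the coefficient of $z$, namely $b^2\equiv a^2b\pmod n$, which, because $b$ is invertible, gives $b\equiv a^2\pmod n$. Combined with $a+b\equiv 1$ this produces the quadratic $a^2+a-1\equiv 0\pmod n$. Feeding $b=a^2$ back into the coefficient of $y$ (equivalently, the coefficient of $x$) and reducing powers of $a$ via $a^2\equiv 1-a$ collapses that relation to $6a\equiv 4\pmod n$, hence $3a\equiv 2\pmod n$ since $2$ is a unit.

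It then remains to combine the two surviving conditions $a^2+a-1\equiv 0$ and $3a\equiv 2\pmod n$. Multiplying the quadratic by $9$ and substituting $9a^2=(3a)^2\equiv 4$ and $9a=3(3a)\equiv 6$ reduces it to $4+6-9\equiv 0$, that is, $1\equiv 0\pmod n$. Thus $n\mid 1$, so $n=1$ and the quasigroup is trivial, which proves that there are no idempotent $k$-translatable Cheban quasigroups.

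The argument is routine arithmetic modulo $n$, so I do not anticipate a genuine obstacle. The only point requiring care is the bookkeeping in expanding the two nested triple products and the ensuing coefficient comparison; the conceptual step that makes everything collapse is noticing that invertibility of $b$ turns the $z$-coefficient into the single clean relation $b\equiv a^2$, after which the remaining congruences reduce to a quadratic and a linear condition in $a$ that are jointly inconsistent for every $n>1$.
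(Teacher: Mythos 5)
Your proof is correct and takes essentially the same route as the paper: substitute the linear form $x\cdot y=[ax+by]_n$ from Theorem \ref{T-42} into the Cheban identity and compare the coefficients of $x,y,z$ modulo $n$ (your three congruences are precisely the paper's, which are stated after eliminating $b=[1-a]_n$). The only divergence is in the elimination: you use invertibility of $b$ to get $b\equiv a^2$, then combine $a^2+a-1\equiv 0$ with $3a\equiv 2$ to force $n=1$, whereas the paper multiplies its polynomial congruences to obtain successively $[2a^3]_n=[2a^2]_n=[2a]_n=[2]_n=0$ and then invokes oddness of $n$; both eliminations are sound, and yours even pins down the slightly sharper conclusion $n\mid 1$ directly.
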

\begin{proof}
The Cheban identity implies
$(1)$ $[2a^3-3a^2-a+1]_n=0$, $(2)$ $[a^3-3a^2+a]_n=0$ and $(3)$ $[a^3-2a+1]_n=0$. From $(2)$ and $(3)$, multiplying each identity by $a$, we obtain $(4)$ $[3a^3]_n=[3a^2-a]_n$. By $(2)$ and $(4)$, $[3a^3]_n=[a^3]_n$ and so $[2a^3]_n=0$. Multiplying $(2)$ by $2a$ we obtain $[2a^2]_n=0$. Multiplying $(2)$ by $2$ we get $[2a]_n=0$. Now, multiplying $(1)$ by $2$ we obtain $[2]_n=0$. So, $n=2$, a contradiction because $n$ must be odd.
\end{proof}

A {\it Schr\"oder quasigroup} is a quasigroup satisfying the identity \ $xy\cdot yx=x$.
\begin{proposition} There are no idempotent $k$-translatable Schr\"oder quasigroups.
\end{proposition}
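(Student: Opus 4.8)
The plan is to mirror the template used for the Cheban and Stein propositions above: invoke the canonical form of an idempotent $k$-translatable quasigroup, substitute it into the defining identity, and read off congruences on the coefficients. First I would appeal to Theorem \ref{T-42} to write the multiplication as $x\cdot y=[ax+by]_n$ for suitable $a,b\in\mathbb{Z}_n$ with $[a+b]_n=1$ and $[a+bk]_n=0$.

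Next I would substitute this form into the Schr\"oder identity $xy\cdot yx=x$. Writing $xy=[ax+by]_n$ and $yx=[ay+bx]_n$, a second application of the product rule gives $xy\cdot yx=[a(ax+by)+b(ay+bx)]_n=[(a^2+b^2)x+2ab\,y]_n$. Since this must equal $x$ for all $x,y\in Q$, comparing the coefficient of $x$ and the coefficient of $y$ yields the two requirements $[a^2+b^2]_n=1$ and $[2ab]_n=0$.

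Finally I would derive a contradiction from $[2ab]_n=0$. Because the quasigroup is idempotent and $k$-translatable, Lemma \ref{L-41}$(vi)$ forces $n$ to be odd, so $(2,n)=1$; moreover $(b,n)=1$ (as noted immediately after Theorem \ref{T-42}), and since $a=[-bk]_n$ with $(k,n)=1$ for a quasigroup one gets $(a,n)=(k,n)=1$ as well. Hence $2ab$ is a unit modulo $n$ and so $[2ab]_n\neq 0$ for $n>1$, contradicting the congruence just obtained. I expect the only point needing care to be this last step, namely justifying that each of $2$, $a$, and $b$ is invertible modulo $n$; everything preceding it is a direct expansion, and it is precisely the oddness of $n$ together with the coprimality conditions packaged in the quasigroup hypothesis that close the argument.
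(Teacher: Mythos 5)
Your proof is correct, and it diverges from the paper's in its second half. The first half is the same: both invoke Theorem \ref{T-42}, substitute $x\cdot y=[ax+by]_n$ into the Schr\"oder identity, and extract the key congruence --- the paper records it as $[2a^2]_n=[2a]_n$, which, using $b=[1-a]_n$, is exactly your $[2ab]_n=0$. From there the paper multiplies the translatability relation $[a+bk]_n=0$ by $2a$ to deduce $[2a]_n=0$, and then exhibits a concrete failure of cancellation, $2\cdot n=[2a+bn]_n=n=n\cdot n$, impossible in a quasigroup; notably this route never uses oddness of $n$ or any coprimality facts, only the relation $[a+bk]_n=0$ and the quasigroup property. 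You instead argue number-theoretically that $2ab$ is a unit modulo $n$: the factor $2$ because $n$ is odd (Lemma \ref{L-41}$(vi)$), the factor $b$ by the remark following Theorem \ref{T-42}, and the factor $a$ because $a=[-bk]_n$ with $(b,n)=(k,n)=1$ (Lemma \ref{L-41}$(v)$), so $[2ab]_n=0$ is impossible for $n>1$. Your finish is more transparent about where translatability and oddness enter, and it is self-contained modular arithmetic; the paper's finish is shorter and stays inside the quasigroup, trading the unit argument for a visible cancellativity violation. Both arguments silently assume $n>1$ (the trivial one-element quasigroup vacuously satisfies everything), a harmless caveat that you, unlike the paper, at least flag explicitly.
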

\begin{proof} By Theorem \ref{T-42}, in an idempotent $k$-translatable Schr\"oder quasigroup of order $n$ must be $[2a^2]_n=[2a]_n$. Hence, multiplying $[a+bk]_n=0$ by $2a$ we obtain $[2a]_n=0$. Thus $2\cdot n=[2a+bn]_n=n=n\cdot n$, a contradiction. 
\end{proof}

From the above results it follows that the only idempotent $k$-translatable quasigroup that is $C3$ and quadratical is $x\cdot y=[3x+11y]_{13}$. The only idempotent $k$-translatable quasigroup that is $C3$ and $ARO$ is $x\cdot y=[2x+6y]_{7}$. The only idempotent $k$-translatable quasigroup that is
right modular and quadratical is $x\cdot y=[2x+4y]_{5}$ . The only idempotent $k$-translatable
quasigroup that is hexagonal and $ARO$ is $x\cdot y=[5x+3y]_{7}$. The only idempotent $k$-translatable
quasigroup that is the dual of a $C3$ quasigroup and $ARO$ is $x\cdot y=[27x+5y]_{31}$. There are no
idempotent $k$-translatable quasigroups that are $ARO$ and right modular, Stein and $C3$, Stein and right modular, right modular and $GS$, $ARO$ and Stein, $GS$ and $ARO$, Stein and $GS$, $GS$ and $C3$, Hexagonal and Stein, Hexagonal and $C3$, Hexagonal and right modular, hexagonal and $GS$, quadratical and hexagonal, quadratical and $ARO$, quadratical and $GS$, $C3$ and right modular, or quadratical and Stein.

\section{Parastrophes of $k$-translatable quasigroups}   

Each quasigroup $(Q,\cdot)$ determines five new quasigroups $Q_i=(Q,\circ_i)$ with the operations $\circ_i$ defined as follows:
$$
\begin{array}{cccc}
x\circ_1 y=z\;\Leftrightarrow\; x\cdot z=y,\\
x\circ_2 y=z\;\Leftrightarrow\; z\cdot y=x,\\
x\circ_3 y=z\;\Leftrightarrow\; z\cdot x=y,\\
x\circ_4 y=z\;\Leftrightarrow\; y\cdot z=x,\\
x\circ_5 y=z\;\Leftrightarrow\; y\cdot x=z.\\
\end{array}
$$
Such defined quasigroups are called {\em conjugates} or {\em parastrophes} of $(Q,\cdot)$. 
Since $x\circ_1 y=y\circ_4 x$, $x\circ_2 y=y\circ_3 x$ and $x\circ_5 y=y\cdot x$, parastrophes of a given quasigroups are pairwise dual. Moreover, parastrophes of some quasigroups are pairwise equal or isotopic (cf. \cite{Kee}, \cite{Lin} and \cite{Par}). Parastrophes of idempotent quasigroups are idempotent, but parastrophes of translatable quasigroups are not translatable in general. Indeed, a $3$-translatable quasigroup $Q$ defined by the multiplication table with the first row $1, 4, 3, 2, 8, 7, 6, 5$ has only one translatable parastrophe, namely its dual quasigroup, i.e., $(Q,\circ_5)$. In Theorem \ref{T-trans} we will prove that parastrophes of an idempotent $k$-translatable quasigroup also are idempotent and $k$-translatable, but for other value of $k$.

Let $Q_a=(Q,\cdot)$ be a naturally orderd idempotent and $k$-translatable quasigroup of order $n$ with the multiplication $x\cdot y=[ax+by]_n$, $a,b\in\mathbb{Z}_n$. Then, $[a+b]_n=1$ and $(a,n)=(b,n)=1$. The last means that $aa'+ns=1=bb'+nt$ for some $a',b',s,t$, i.e., $[aa']_n=[bb']_n=1$. It is clear that elements $a',b'\in\mathbb{Z}_n$ are uniquely determined. Moreover, $a'=b'$ is equivalent to $a=b$.

\begin{theorem}\label{T-51}
Multiplications of parastrophes of a quasigroup $Q_a$ have the form:
\begin{enumerate}
\item[$(1)$] $x\circ_1 y=[(1-b')x+b'y]_n$,
\item[$(2)$] $x\circ_2 y=[a'x+(1-a')y]_n$,
\item[$(3)$] $x\circ_3 y=[(1-a')x+a'y]_n$,
\item[$(4)$] $x\circ_4 y=[b'x+(1-b')y]_n$,
\item[$(5)$] $x\circ_5 y=[bx+ay]_n$,
\end{enumerate}
where $[aa']_n=[bb']_n=1$.
\end{theorem}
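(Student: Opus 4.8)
The plan is to solve each of the five defining equivalences directly for $z$, exploiting the fact that $b$ (respectively $a$) is invertible modulo $n$ with inverse $b'$ (respectively $a'$). The only nontrivial bookkeeping is to re-express the coefficients $[-ab']_n$ and $[-a'b]_n$ in the forms appearing in the statement, and this is handled by two preliminary identities derived from $[a+b]_n=1$.

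First I would record these two identities. Multiplying $[a+b]_n=1$ by $b'$ gives $[ab'+bb']_n=[b']_n$; since $[bb']_n=1$, this yields
\[
[-ab']_n=[1-b']_n .
\]
Symmetrically, multiplying by $a'$ and using $[aa']_n=1$ gives
\[
[-a'b]_n=[1-a']_n .
\]

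With these in hand, each parastrophe is a one-line computation. For $\circ_5$, directly $x\circ_5 y=y\cdot x=[ay+bx]_n=[bx+ay]_n$, proving $(5)$. For $\circ_1$, the condition $x\circ_1 y=z$ means $x\cdot z=y$, i.e. $[ax+bz]_n=y$; solving, $[bz]_n=[y-ax]_n$, so $z=[b'(y-ax)]_n=[-ab'x+b'y]_n$, and the first identity rewrites $[-ab']_n=[1-b']_n$, giving $(1)$. The remaining three are identical in spirit: $\circ_2$ comes from $[az+by]_n=x$, inverting $a$; $\circ_3$ from $[az+bx]_n=y$, again inverting $a$; and $\circ_4$ from $[ay+bz]_n=x$, inverting $b$. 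In each case one isolates $z$ by multiplying by $a'$ or $b'$ and then applies whichever of the two preliminary identities converts $[-a'b]_n$ or $[-ab']_n$ into $[1-a']_n$ or $[1-b']_n$.

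The only point requiring care is the tacit use of uniqueness: because $(Q,\cdot)$ is a quasigroup, each defining equivalence determines $z$ uniquely, so the expression obtained by formally inverting the relevant coefficient is genuinely the parastrophe operation and not merely one solution among several. This is guaranteed by $(a,n)=(b,n)=1$, which makes multiplication by $a'$ and $b'$ bijections of $\mathbb{Z}_n$. I do not expect any real obstacle here; the entire content sits in the two modular identities, after which all five formulas drop out mechanically.
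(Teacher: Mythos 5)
Your proposal is correct and follows essentially the same route as the paper: solve each defining equivalence for $z$ by multiplying through by $b'$ or $a'$, then rewrite $[-ab']_n=[1-b']_n$ and $[-a'b]_n=[1-a']_n$ using $[a+b]_n=1$ (the paper does this implicitly in cases $(1)$ and $(2)$ and declares the rest analogous or obvious). Your explicit isolation of the two modular identities and the remark on uniqueness of $z$ are just tidier presentations of the same argument.
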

\begin{proof}
$(1)$. By definition, $x\circ_1 y=z\Leftrightarrow x\cdot z=y\Leftrightarrow [ax+bz]_n=y$, and consequently, $[bz]_n=[y-ax]_n$. From this, multiplying by $b'$, we obtain 
$z=[b'y-ab'x]_n=[-ab'x+b'y]_n$. So, 
$$
x\circ_1 y=[-ab'x+b'y]_n=[(1-b')x+b'y]_n .
$$

$(2)$. Similarly, 
$x\circ_2 y=z\Leftrightarrow z\cdot y=x\Leftrightarrow [az+by]_n=x$. Whence, 
$$
x\circ_2 y=[a'x-a'by]_n=[a'x+a'(a-1)y]_n=[a'x+(1-a')y]_n.
$$

Analogously we can prove $(3)$ and $(4)$. $(5)$ is obvious.
\end{proof}

\begin{corollary}
All parastrophes of a quasigroup $Q_a$ are isotopic to the group $\mathbb{Z}_n$.
\end{corollary}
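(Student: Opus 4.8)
The plan is to exploit the explicit linear form of the parastrophe operations supplied by Theorem \ref{T-51}. Each of the five operations $\circ_i$ listed there is of the shape $x\circ y=[\alpha x+\beta y]_n$ for a suitable coefficient pair $(\alpha,\beta)$ in $\mathbb{Z}_n$; reading them off, the pairs are $(1-b',b')$, $(a',1-a')$, $(1-a',a')$, $(b',1-b')$ and $(b,a)$. In every case $[\alpha+\beta]_n=1$, and the first thing I would check is that both coefficients are units, i.e.\ $(\alpha,n)=(\beta,n)=1$. Since $(a,n)=(b,n)=1$ we have $(a',n)=(b',n)=1$, and each coefficient of the form $1-b'$ or $1-a'$ is in fact a product of two units: from the proof of Theorem \ref{T-51} one reads $1-b'=-ab'$ and $1-a'=-a'b$. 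Hence all the coefficients occurring in the five parastrophes are coprime to $n$.

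With this in hand the isotopy is immediate and uniform across the cases. Fixing one parastrophe $x\circ y=[\alpha x+\beta y]_n$ with $\alpha,\beta$ units, I would define three maps on $\mathbb{Z}_n$, namely $\varphi(x)=[\alpha x]_n$, $\psi(y)=[\beta y]_n$ and $\chi=\mathrm{id}_{\mathbb{Z}_n}$. Because $\alpha$ and $\beta$ are coprime to $n$, multiplication by $\alpha$ and by $\beta$ permutes $\mathbb{Z}_n$, so $\varphi,\psi,\chi$ are bijections. The computation $\chi(x\circ y)=[\alpha x+\beta y]_n=\varphi(x)+\psi(y)$ then says precisely that $(\varphi,\psi,\chi)$ is an isotopism from $(Q,\circ)$ onto the additive group $(\mathbb{Z}_n,+)$. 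Applying this construction to each coefficient pair above shows that all five parastrophes are isotopic to $\mathbb{Z}_n$, and the same argument applied to $(a,b)$ itself covers $Q_a$.

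I do not anticipate a genuine obstacle here; the only point requiring care is the verification that every coefficient appearing in Theorem \ref{T-51} is a unit modulo $n$, for otherwise the linear maps $\varphi$ and $\psi$ would fail to be bijective and the isotopism would break down. That verification reduces to the fact that $a,b,a',b'$ are all coprime to $n$ together with the identities $1-b'=-ab'$ and $1-a'=-a'b$ recorded above. In effect the whole corollary is a single remark: \emph{any} naturally ordered groupoid of order $n$ with operation $x\circ y=[\alpha x+\beta y]_n$ in which $\alpha$ and $\beta$ are units is isotopic to $(\mathbb{Z}_n,+)$ via $(\varphi,\psi,\mathrm{id})$, and Theorem \ref{T-51} exhibits each parastrophe in exactly this form.
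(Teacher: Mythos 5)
Your proof is correct and is exactly the argument the paper intends: the corollary is stated without proof as an immediate consequence of Theorem \ref{T-51}, since each parastrophe has the linear form $x\circ y=[\alpha x+\beta y]_n$ with $\alpha,\beta$ units modulo $n$, and then $(\varphi,\psi,\mathrm{id})$ with $\varphi(x)=[\alpha x]_n$, $\psi(y)=[\beta y]_n$ is an isotopism onto $(\mathbb{Z}_n,+)$. Your careful verification that the coefficients $1-a'$ and $1-b'$ are units (via $1-b'=-ab'$ and $1-a'=-a'b$) is the only nontrivial detail, and you have it right.
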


\begin{theorem}\label{T-trans}
A quasigroup $Q_a$ is $k$-translatable for $k=[1-b']_n$. Its parastrophes are $k^*$-translatable, where
\begin{enumerate}
\item[$(1)$] $k^*=a$ for $(Q,\circ_1)$,
\item[$(2)$] $k^*=[1-k]_n$ for $(Q,\circ_2)$,    
\item[$(3)$] $k^*=[1-a]_n$ for $(Q,\circ_3)$,
\item[$(4)$] $k^*=a'$ for $(Q,\circ_4)$,     
\item[$(5)$] $k^*=[1-a']$ for $(Q,\circ_5)$.     
\end{enumerate}
\end{theorem}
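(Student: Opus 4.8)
The plan is to reduce everything to the single criterion of Theorem \ref{T-42}: a naturally ordered idempotent groupoid $x\ast y=[\alpha x+\beta y]_n$ with $[\alpha+\beta]_n=1$ is $k^{*}$-translatable precisely when $[\alpha+\beta k^{*}]_n=0$, and when $(\beta,n)=1$ this pins down the unique value $k^{*}=[-\alpha\beta^{-1}]_n$. Each of the five multiplications listed in Theorem \ref{T-51} has exactly this shape, with coefficient sum $1$ (hence is idempotent), so proving the theorem becomes a matter of solving $[\alpha+\beta k^{*}]_n=0$ in each case. For $Q_a$ itself one has $\alpha=a$, $\beta=b$, so $k=[-ab^{-1}]_n=[-ab']_n$; using $a=[1-b]_n$ together with $[bb']_n=1$ gives $k=[-(1-b)b']_n=[-b'+bb']_n=[1-b']_n$, which is the first assertion.

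Before handling the parastrophes I would record the two arithmetic facts that drive every subsequent computation. From $[aa']_n=[bb']_n=1$ we have $(a')^{-1}=a$ and $(b')^{-1}=b$ in $\mathbb{Z}_n$. Multiplying $[a+b]_n=1$ by $a'$ and by $b'$ and using these relations yields
$$[1-a']_n=[-a'b]_n,\qquad [1-b']_n=[-ab']_n.$$
These also exhibit $1-a'$ and $1-b'$ as products of units, so every coefficient $\beta$ occurring in Theorem \ref{T-51} is coprime to $n$ and the criterion above genuinely applies in all five cases.

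Then I would run the cases. For $(Q,\circ_1)$, $\alpha=[1-b']_n$, $\beta=b'$, so $k^{*}=[-(1-b')(b')^{-1}]_n=[-(1-b')b]_n=[-b+b'b]_n=[1-b]_n=a$. The computations for $(Q,\circ_3)$, $(Q,\circ_4)$ and $(Q,\circ_5)$ are identical in spirit, each using the appropriate inverse $(a')^{-1}=a$ or $(b')^{-1}=b$ and one of the displayed reductions, and they deliver $k^{*}=[1-a]_n$, $k^{*}=a'$ and $k^{*}=[1-a']_n$ respectively. For $(Q,\circ_2)$ I would first rewrite $\beta=1-a'=[-a'b]_n$, whence $\beta^{-1}=[-b'a]_n$ and $k^{*}=[-a'\cdot(-b'a)]_n=[b'\cdot aa']_n=b'=[1-k]_n$.

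There is no genuine obstacle here, since the content is entirely modular bookkeeping; the only place to be careful is the identification of $\beta^{-1}$ in the two cases where $\beta$ equals $1-a'$ or $1-b'$, and getting the reductions $[1-a']_n=[-a'b]_n$ and $[1-b']_n=[-ab']_n$ right is exactly what makes those two lines collapse. As an independent check I would invoke the pairings $x\circ_1y=y\circ_4x$, $x\circ_2y=y\circ_3x$, $x\circ_5y=y\cdot x$, which make the parastrophes dual in three pairs; by Proposition \ref{dual} the two translatabilities of a dual pair must multiply to $1$ modulo $n$, and indeed $[a\cdot a']_n=1$, $[b'\cdot(1-a)]_n=[b'b]_n=1$, and $[(1-b')(1-a')]_n=[(ab')(a'b)]_n=[(aa')(bb')]_n=1$, in agreement with the values found above.
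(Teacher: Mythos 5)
Your proof is correct and follows essentially the same route as the paper: both reduce each case to the linear-form criterion of Theorem \ref{T-42} applied to the parastrophe multiplications from Theorem \ref{T-51}, using the relations $[aa']_n=[bb']_n=1$ and $[a+b]_n=1$. The only cosmetic difference is that you solve $[\alpha+\beta k^{*}]_n=0$ for $k^{*}$ (which requires your observation that each $\beta$ is a unit), while the paper merely verifies this congruence for the stated values; your duality cross-check via Proposition \ref{dual} is a nice addition but not part of the paper's argument.
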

\begin{proof}
From $[a+(1-b')b]_n=[a+b-1]_n=0$ it follows that a quasigroup $Q_a$ is $k$-translatable for $k=[1-b']_n$. Similartl, from $x\circ_1 y=[(1-b')x+b'y]_n$ and  $[(1-b')+b'a]_n=[(1-a')+b'(1-b)]_n=0$ it follows that $(Q,\circ_1)$ is $k^*$-translatable for $k^*=a$.

Analogously we can verify other cases.
\end{proof}

\begin{lemma}\label{L-par} 
For a quasigroup $Q_a$ and its parastrophes the following relationships are possible:
\begin{enumerate}
\item[$(1)$] $Q_a=Q_1\Leftrightarrow x\cdot y=[2x-y]_n$,
\item[$(2)$] $Q_a=Q_2\Leftrightarrow x\cdot y=[-x+2y]_n$, 
\item[$(3)$] $Q_a=Q_3\Leftrightarrow [ab]_n=1$,
\item[$(4)$] $Q_a=Q_4\Leftrightarrow [ab]_n=1$,
\item[$(5)$] $Q_a=Q_5\Leftrightarrow a=b$.
\end{enumerate}
\end{lemma}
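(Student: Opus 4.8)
The plan is to reduce each equality $Q_a=Q_i$ to a comparison of coefficients, since both $\cdot$ and every $\circ_i$ are linear operations modulo $n$. First I would record the elementary observation that two operations $[px+qy]_n$ and $[p'x+q'y]_n$ agree on all of $Q\times Q$ if and only if $[p]_n=[p']_n$ and $[q]_n=[q']_n$: evaluating at $(x,y)=(1,n)$ isolates the first coefficient (as $qn\equiv 0$) and at $(x,y)=(n,1)$ the second, recalling that $0=n$. With this in hand each case becomes a direct comparison using the formulas of Theorem \ref{T-51}, keeping in mind the standing relations $[a+b]_n=1$, $(a,n)=(b,n)=1$, and $[aa']_n=[bb']_n=1$.

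For the three straightforward cases I would argue as follows. Comparing $x\cdot y=[ax+by]_n$ with $x\circ_3 y=[(1-a')x+a'y]_n$, the $y$-coefficients match exactly when $[b]_n=[a']_n$; multiplying by $a$ and using $[aa']_n=1$ turns this into $[ab]_n=1$, while the $x$-coefficient condition $[a]_n=[1-a']_n$ is then automatic because $[a]_n=[1-b]_n$. This proves $(3)$, and the identical computation with $x\circ_4 y=[b'x+(1-b')y]_n$ proves $(4)$. For $(5)$, comparing with $x\circ_5 y=[bx+ay]_n$ forces $[a]_n=[b]_n$ at once, i.e. $a=b$.

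The cases $(1)$ and $(2)$ are where the real work lies, and I expect the square-root-of-unity subtlety to be the main obstacle. Comparing $\cdot$ with $x\circ_1 y=[(1-b')x+b'y]_n$, matching $y$-coefficients gives $[b]_n=[b']_n$, hence $[b^2]_n=1$, and the $x$-coefficient condition is again automatic. The delicate point is that $[b^2]_n=1$ does \emph{not} in general force $b\equiv-1$ when $n$ is composite. To eliminate the spurious square roots I would factor: $[b^2]_n=1$ means $n\mid(b-1)(b+1)$, while the standing hypothesis $(a,n)=1$ reads $(1-b,n)=1$, that is $(b-1,n)=1$; coprimality of $b-1$ with $n$ then forces $n\mid b+1$, so $b\equiv-1$ and $a=1-b\equiv 2$, giving $x\cdot y=[2x-y]_n$. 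Conversely this operation has $b'=-1$, and one checks directly that $\circ_1$ reproduces it. Case $(2)$ is the mirror image, comparing with $x\circ_2 y=[a'x+(1-a')y]_n$: matching $x$-coefficients yields $[a^2]_n=1$, and the hypothesis $(b,n)=(1-a,n)=1$ forces $a\equiv-1$, $b\equiv 2$, i.e. $x\cdot y=[-x+2y]_n$.
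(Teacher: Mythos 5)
Your proposal is correct and follows essentially the same route as the paper: reduce each equality $Q_a=Q_i$ to a comparison of coefficients via Theorem \ref{T-51}, derive a quadratic congruence, and solve it using the standing coprimality conditions. In particular, your factoring step for case $(1)$, namely $n\mid (b-1)(b+1)$ with $(b-1,n)=(a,n)=1$, is literally the paper's computation $[a(a-2)]_n=0$ followed by multiplication by $a'$, since $(b-1)(b+1)=a(a-2)$ when $b=1-a$.
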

\begin{proof}
Indeed, $Q_a=Q_1$ implies $b=b'$. Thus $1=[bb']_n=[1-2a+a^2]_n$, i.e., $[a^2]_n=[2a]_n$, whence, multiplying by $a'$, we obtain $a=2$, and consequently $b=n-1$. So, $x\cdot y=[2x-y]_n$. The converse statement is obvious.

Other statements can be proved analogously.
\end{proof}

\begin{theorem}
For parastrophes of a quasigroup $Q_a$ the following cases are possible:
\begin{enumerate}
\item[$(a)$] $Q_a=Q_1=Q_2=Q_3=Q_4=Q_5\Leftrightarrow 
x\cdot y=[2x+2y]_3$,
\item[$(b)$] $ Q_a=Q_3=Q_4\ne Q_1=Q_2=Q_5\Leftrightarrow [ab]_n=1, \ a\ne b$,
\item[$(c)$] $Q_a=Q_1\ne Q_2=Q_3\ne Q_4=Q_5\Leftrightarrow 
x\cdot y=[2x-y]_n$ and $n>3$,
\item[$(d)$] $Q_a=Q_2\ne Q_1=Q_4\ne Q_3=Q_5\Leftrightarrow 
x\cdot y=[-x+2y]_n$ and $n>3$,
\item[$(e)$] $Q_a=Q_5\ne Q_1=Q_3\ne Q_2=Q_4\Leftrightarrow a=b$ and $n>3$,
\item[$(f)$] $ Q_a\ne Q_1\ne Q_2\ne Q_3\ne Q_4\ne Q_5\Leftrightarrow a\ne b\ne 2$ and $n>3$.
\end{enumerate}
\end{theorem}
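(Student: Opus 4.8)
The plan is to reduce the whole statement to a comparison of the leading coefficients of the six parastrophe multiplications already computed in Theorem~\ref{T-51}. Each parastrophe of $Q_a$ is again of the form $x*y=[\alpha x+(1-\alpha)y]_n$, so two of them coincide exactly when their leading coefficients agree modulo $n$. Writing $a'=a^{-1}$ and $b'=b^{-1}=(1-a)^{-1}$ (both defined since $(a,n)=(b,n)=1$), the leading coefficients of $Q_a,Q_1,Q_2,Q_3,Q_4,Q_5$ are, respectively,
$$a,\quad 1-b',\quad a',\quad 1-a',\quad b',\quad b .$$
Thus the problem becomes purely arithmetic: for which $a$ (equivalently $b=1-a$) do prescribed subsets of these six residues coincide?

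First I would record the three dual pairs $(Q_a,Q_5)$, $(Q_1,Q_4)$, $(Q_2,Q_3)$ arising from $x\circ_5 y=y\cdot x$, $x\circ_1 y=y\circ_4 x$, $x\circ_2 y=y\circ_3 x$, and then invoke Lemma~\ref{L-par}, which already isolates when $Q_a$ meets each parastrophe: $Q_a=Q_1\Leftrightarrow a=2$, $Q_a=Q_2\Leftrightarrow a=n-1$, $Q_a=Q_3\Leftrightarrow Q_a=Q_4\Leftrightarrow[ab]_n=1$, and $Q_a=Q_5\Leftrightarrow a=b$. The conceptual engine behind the classification is that parastrophy realises an action of the symmetric group $S_3$ on the six conjugates, so that $H=\{\sigma:Q_\sigma=Q_a\}$ is a subgroup and $Q_\sigma=Q_\tau$ holds exactly when $\sigma,\tau$ lie in the same coset of $H$. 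Hence the distinct parastrophes split into cosets, and the admissible coincidence patterns are precisely those attached to the subgroups of $S_3$ (the trivial group, the three order-two subgroups generated by transpositions, the alternating group $A_3$, and $S_3$ itself). This is exactly why only the six patterns $(a)$--$(f)$ can occur.

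It then remains to translate each subgroup into an arithmetic condition and to read off the induced partition by substituting the specialised value of $a$ into the coefficient list. For $H=S_3$ all six coefficients agree, which forces simultaneously $a=b$ and $a=2$; with $a+b=1$ this gives $[4]_n=1$, hence $n=3$ and $x\cdot y=[2x+2y]_3$, which is $(a)$. For $H=A_3$ I use $[ab]_n=1$ (so $b=a'$ and $a=b'$); the list then collapses to $a,b,b,a,a,b$, giving $Q_a=Q_3=Q_4\ne Q_1=Q_2=Q_5$ with $a\ne b$, which is $(b)$. The three order-two subgroups correspond to $Q_a=Q_1$, $Q_a=Q_2$, $Q_a=Q_5$: here $a=b$ makes the list $a,-1,2,-1,2,a$, so $Q_1=Q_3$ and $Q_2=Q_4$, exactly $(e)$; $a=2$ makes it $2,2,a',a',-1,-1$, giving $(c)$; and $a=n-1$ makes it $-1,2^{-1},-1,2,2^{-1},2$, giving $(d)$. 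Finally $(f)$ is the residual case in which $H$ is trivial.

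The main obstacle is not any single computation but the bookkeeping that makes the list exhaustive and non-overlapping. I must confirm that the pairings among the \emph{non-identity} parastrophes are forced once $Q_a$'s coincidences are fixed --- this is exactly what the coset argument delivers and what the direct coefficient substitution verifies --- and that the conditions for $(a)$--$(e)$ are mutually exclusive, with every collision between them degenerating to $n=3$ (case $(a)$). Care is also needed with $(f)$: spelled out fully, ``all six distinct'' is equivalent to $a\ne 2$, $a\ne n-1$, $[ab]_n\ne 1$ and $a\ne b$ (with $n>3$ automatic from $a\ne b$), so I would present $(f)$ as this residual condition, the displayed constraint $a\ne b\ne 2$ recording its principal part.
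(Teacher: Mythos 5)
Your proposal is correct, but it reaches the result by a genuinely different route than the paper. The paper proves $(a)$--$(e)$ from Lemma \ref{L-par} plus direct checks, and then handles the hard part, $(f)$, by brute force: it runs through all ten possible coincidences $Q_s=Q_t$ with $1\leqslant s<t\leqslant 5$ and shows by ad hoc modular manipulations (e.g.\ $Q_1=Q_2$ gives $[a'+b']_n=1$, which multiplied by $ab$ yields $[ab]_n=1$, hence $Q_a=Q_3$) that each coincidence forces $Q_a=Q_i$ for some $i$; exhaustiveness of the six patterns is implicit in that case list. You instead invoke the $S_3$-action on conjugates (essentially the viewpoint of the Lindner--Steedly paper cited as \cite{Lin}): the stabilizer $H=\{\sigma:Q_\sigma=Q_a\}$ is a subgroup, coincidences $Q_\sigma=Q_\tau$ correspond to cosets of $H$, and the subgroup lattice of $S_3$ (trivial subgroup, three transposition subgroups, $A_3$, $S_3$) yields exactly the six patterns $(a)$--$(f)$; in particular $(f)$ needs no case analysis at all, since $Q_s=Q_t$ with $s\ne t$ puts a nonidentity element in $H$, which is precisely what the paper's ten cases verify one by one. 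Your arithmetic layer---Lemma \ref{L-par} to pin down $H$, then substitution of $a=2$, $a=n-1$, $a=b$, $[ab]_n=1$ into the coefficient list $a,\,1-b',\,a',\,1-a',\,b',\,b$ from Theorem \ref{T-51}---checks out in every case, and your exclusivity claim is sound because any subgroup of $S_3$ properly containing a transposition subgroup or $A_3$ is all of $S_3$, forcing $n=3$. What your approach buys is conceptual economy and a structural explanation of why the list is complete; what the paper's buys is self-containedness, since the group-action framework requires either verifying the parastrophe-to-permutation dictionary and that it composes as an action, or (as you do) falling back on coefficient substitution to confirm the induced pairings. You are also right that the paper's displayed condition in $(f)$, ``$a\ne b\ne 2$'', is only shorthand: the precise residual condition is $a\ne 2$, $a\ne n-1$, $[ab]_n\ne 1$ and $a\ne b$, which is what both your argument and the paper's own proof actually establish.
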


\begin{proof}
Let $Q_a$ be an idempotent $k$-translatable quasigroup of order $n$. Then $n$ is odd. For $n=3$ we have only one possibility: $x\cdot y=[2x+2y]_3$. Then, as it is not difficult to see, all parastrophes of $Q_a$ are equal to $Q_a$. Conversely, by Lemma \ref{L-par}, $Q_a=Q_1=Q_2$ implies $[2x-y]_n=[-x+2y]_n$. Thus $n=3$ and $x\cdot y=[2x+2y]_3$. This proves $(a)$.

Now let $n>3$. Then, by Lemma \ref{L-par}, $Q_a=Q_3\Leftrightarrow [ab]_n=1\Leftrightarrow a'=b,\, b'=a\Leftrightarrow Q_a=Q_3=Q_4\ne Q_1=Q_2=Q_5$ and $a\ne b$, since $a=b$ implies $n=3$. This proves $(b)$. If $Q_a=Q_1$, then, by Lemma \ref{L-par}, $x\cdot y=[2x-y]_n$. Then obviously, $Q_a=Q_1\ne Q_2=Q_3\ne Q_4=Q_5$. This proves $(c)$.
The proof of $(d)$ is analogous. The case $(e)$ is obvious. So, $Q_a=Q_i$ implies one of statements $(a)-(d)$.

To prove $(f)$ suppose that $Q_a\ne Q_i$ for all $i=1,\ldots,5$. Then also $Q_s\ne Q_t$ for all $1\leqslant s<t\leqslant 5$. Indeed, $Q_1=Q_2$ means that $1=[a'+b']_n$, whence, multiplying by $ab$, we obtain $[ab]_n=[b+a]_n=1$. So, by Lemma \ref{L-par}, $Q_a=Q_3$. $Q_1=Q_3$ implies $Q_a=Q_5$. $Q_1=Q_4$ gives $1=[2b']_n$, and consequently, $b=2$, $a=n-1$. Thus $Q_a=Q_2$, by Lemma \ref{L-par}. From $Q_1=Q_5$ we obtain $a=b$ and $a'=b'$. This implies $Q_1=Q_3$, and in the consequence, $Q_a=Q_5$. In the case $Q_2=Q_3$ we have $1=[2a']_n$, whence $a=2$, $b=n-1$ and $Q_a=Q_1$ by Lemma \ref{L-par}. $Q_2=Q_4$ implies $Q_a=Q_5$. If $Q_2=Q_5$, then $a'=b$, $b'=a$. Consequently $Q_a=Q_4$. Further, $Q_3=Q_4$ implies $[a'+b']_n=1$, whence, as in the case $Q_1=Q_2$, we obtain $Q_a=Q_3$. $Q_3=Q_5$ and $Q_4=Q_5$ imply $Q_a=Q_2$ and $Q_a=Q_1$, respectively.
Thus in any case $Q_s=Q_t$ implies $Q_a=Q_i$ for some $i=1,2,\ldots,5$. Hence, if $Q_a\ne Q_i$ for all $i=1,\ldots,5$, then also $Q_s\ne Q_t$ for all $1\leqslant s<t\leqslant 5$. This proves $(f)$.
The example $x\cdot y=[3x+9y]_{11}$ shows that the case $(f)$ is possible.
\end{proof}

Using Theorems \ref{T-425}, \ref{T-426} and \ref{T-trans} we can calculate the $k^*$-translatability of the parastrophes of several types of idempotent $k$-translatable quasigroups as a function of $a$. Results of calculations are presented below.

$$\begin{array}{|c|c|c|c|c|c|c|c|c|}\hline
&\!quadratical\!&\!hexagonal\!&GS&ARO&Stein&\!right\;modular\!\\ \hline
\rule{0pt}{10pt}Q_a&[1-2a]_n&[1-a]_n&[a+1]_n&[-1-2a]_n&[a-1]_n&[-1-a]_n\\ \hline
Q_1&a&a&a&a&a&a\\ \hline
\rule{0pt}{10pt}Q_2&[2a]_n&a&[-a]_n&[2+2a]_n&[2-a]_n&[a+2]_n\\ \hline
\rule{0pt}{10pt}Q_3&[1-a]_n&[1-a]_n&[1-a]_n&[1-a]_n&[1-a]_n&[1-a]_n\\ \hline
\rule{0pt}{10pt}Q_4&[2-2a]_n&[1-a]_n&[a-1]_n&[2a]_n&[3-a]_n&[a+1]_n\\ \hline
\rule{0pt}{10pt}Q_5&[2a-1]_n&a&[2-a]_n&[1-2a]_n&[a-2]_n&[-a]_n\\ \hline
\end{array}
$$
\centerline{\it Table $1$. Translatability as a function of $a$.}

\medskip
If $Q_a$ is $k$-translatable, then its parastrophes are $k^*$-translatable for the following values of $k^*$.
$$
\begin{array}{|c|c|c|c|c|c|c|c|c|}\hline
Q_a&\!quadratical\!&\!hexagonal\!&GS&ARO&Stein&\!right\;modular\!\\ \hline
\rule{0pt}{10pt}Q_1&\![1\!-\!k\!-\!a]_n\!&[1-k]_n&[k-1]_n&\![\!-1\!-\!k\!-\!a]_n\!&[k+1]_n&[-1-k]_n\\ \hline
\rule{0pt}{10pt}Q_2&[1-k]_n&[1-k]_n&[1-k]_n&[1-k]_n&[1-k]_n&[1-k]_n\\ \hline
\rule{0pt}{10pt}Q_3&[k+a]_n&k&\![k\!-\!2a]_n&[k\!+\!a\!+\!2]_n&[-k]_n&[k+2]_n\\ \hline
\rule{0pt}{10pt}Q_4&[k+1]_n&k&[k-2]_n&[-1-k]_n&[2-k]_n&[-k]_n\\ \hline
\rule{0pt}{10pt}Q_5&[-k]_n&[1-k]_n&[3-k]_n&[k+2]_n&[k-1]_n&[k+1]_n\\ \hline
\end{array}
$$
\centerline{\it Table $2$. Tranlatability of parastrophes as a function of $k$.}

Using Theorem \ref{T-425} and Theorem \ref{T-51}, we can readily see that all parastrophes of a hexagonal quasigroup $Q_a$ are hexagonal. Moreover, a quasigroup $Q_a$ is hexagonal if and only if one of its parastrophes is hexagonal. In other types of idempotent $k$-translatable quasigroups the situation is more complicated. For example, parastrophes of a quadratical $k$-translatable quasigroup $Q_a$ are quadratical only for some values of $a$ and $n$.
$$
\begin{array}{|c|c|c|c|c|c|c|c|}\hline
Q_a&\!\!quadratical\!\!&GS&ARO&Stein&\!r.\,modular\!&C3\\ \hline
\rule{0pt}{10pt}Q_1&\!a\!=\!2,\,n\!=\!5\!& never &\!a\!=\!2,\,n\!=\!7\!& never & always &\!a\!=\!2,\,n\!=\!7\!\\ \hline
\rule{0pt}{10pt}Q_2&\!a\!=\!4,\,n\!=\!5\!&never&never&always&never&always\\ \hline
\rule{0pt}{10pt}Q_3&\!a\!=\!4,\,n\!=\!5\!&never&\!a\!=\!5,\,n\!=\!7\!&never&never&\!a\!=\!2,\,n\!=\!7\!\\ \hline
\rule{0pt}{10pt}Q_4&\!a\!=\!2,\,n\!=\!5\!&never&\!a\!=\!5,\,n\!=\!7\!&never&never&\!a\!=\!4,\,n\!=\!7\!\\ \hline
\rule{0pt}{10pt}Q_5&always&always&never&never&never&\!a\!=\!4,\,n\!=\!7\!\\ \hline
\end{array}
$$
\centerline{\it Table $3$. Parastrophe types}

\small

\noindent
W.A. Dudek \\
 Faculty of Pure and Applied Mathematics,\\
 Wroclaw University of Science and Technology,\\
 50-370 Wroclaw,  Poland \\
 Email: wieslaw.dudek@pwr.edu.pl\\[4pt]
R.A.R. Monzo\\
Flat 10, Albert Mansions, Crouch Hill,\\ London N8 9RE, United Kingdom\\
E-mail: bobmonzo@talktalk.net

\end{document}